\numberwithin{equation}{section}
\newcommand{\ZB}{\mathbb{Z}}
\newcommand{\RB}{\mathbb{R}}
\newcommand{\NB}{\mathbb{N}}
\newcommand{\EB}{\mathbb{E}}
\newcommand{\PB}{\mathbb{P}}
\newcommand{\GC}{\mathcal{G}}
\newcommand{\FC}{\mathcal{F}}
\newcommand{\AC}{\mathcal{A}}
\newcommand{\BC}{\mathcal{B}}
\newcommand{\YC}{\mathcal{Y}}
\newcommand{\XC}{\mathcal{X}}
\newcommand{\PC}{\mathcal{P}}
\newcommand{\XF}{\mathfrak{X}}
\newcommand{\YF}{\mathfrak{Y}}
\newcommand{\ignore}[1]{}
\theoremstyle{plain}
\newtheorem{theorem}{Theorem}[section]
\newtheorem{proposition}[theorem]{Proposition}
\newtheorem{lemma}[theorem]{Lemma}
\newtheorem{corollary}[theorem]{Corollary}
\newtheorem{conjecture}[theorem]{Conjecture}
\theoremstyle{definition}
\newtheorem{definition}[theorem]{Definition}
\theoremstyle{remark}
\newtheorem{remark}[theorem]{Remark}
\newtheorem{example}[theorem]{Example}
\newtheorem*{acks}{{\rm \bf Acknowledgments}}
\newcommand\urladdrx[1]{{\urladdr{\def~{{\tiny$\sim$}}#1}}}
\author{James Allen Fill}
\address{Department of Applied Mathematics and Statistics,
The Johns Hopkins University,
3400 N.~Charles Street,
Baltimore, MD 21218-2682 USA}
\email{jimfill@jhu.edu}
\thanks{Research for J.~A.~Fill supported by
the Acheson~J.~Duncan Fund for the Advancement of Research in
Statistics.}
\author{Lachlan Ewen MacDonald}
\address{Center for Innovation in Data Engineering and Science (IDEAS), School of Engineering and Applied Science, University of Pennsylvania
220 S.~33rd Street, 
Philadelphia, PA 19104 USA}
\email{lmacdo@seas.upenn.edu}
\thanks{Research for L.~E.~MacDonald supported by the Research Collaboration on the Mathematical and Scientific Foundations of Deep Learning (NSF grant 2031985 and Simons Foundation grant 814201) and the Penn Integrates Knowledge startup fund of Ren\'{e} Vidal.}
\keywords{Disintegration, multifunction, empirical measure, average-case statistical bound, metric Polish space, metric Polish probability space, Wasserstein distance, Kantorovich--Rubinstein duality, Lipschitz function, coupling, graph embedding, log-Sobolev inequality}
\subjclass[2020]{Primary:\ 60E05, 62R20; Secondary:\ 28A50}
\title[Disintegration theorem for multifunctions, with applications]{Disintegration theorem for multifunctions, \\ with applications to empirical Wasserstein distances \\ and average-case statistical bounds}
\date{July 1, 2025}
\begin{document}

\begin{abstract}
    We prove a generalisation of the disintegration theorem to the setting of multifunctions between Polish probability spaces. Whereas the classical disintegration theorem guarantees the disintegration of a probability measure along the \emph{partition} of the underlying space by the fibres of a measurable function, our theorem gives necessary and sufficient conditions for the measure to disintegrate along a \emph{cover} of the underlying space defined by the fibres of a measurable multifunction. Building on this theorem, we introduce a new statistical notion: We declare a metric Polish probability space to be \emph{asymptotically disintegrable} if $n$ i.i.d.-centred balls of decreasing radius carry a disintegration of the measure with probability tending to unity as $n\rightarrow\infty$. We give a number of both 1-dimensional and higher-dimensional examples of asymptotically disintegrable spaces with associated quantitative rates, as well as a strong counterexample. Finally, we give two applications of the notion of asymptotic disintegrability. First, we prove that asymptotically disintegrable spaces admit an easy high-probability quantification of the law of large numbers in Wasserstein space, which in all dimensions either recovers or improves upon the best known rates in some regimes, and is never any worse than existing rates by more than a factor of 2 in the exponent of~$n$, where~$n$ is the number of sample points. Second, we prove that any asymptotically disintegrable space admits a high-probability bound on the error in approximating the expectation of any Lipschitz function by its empirical average over an i.i.d.\ sample. The bound is \emph{average-case} in the sense that it depends only on the empirical average of the local Lipschitz constants of the function, rather than the global Lipschitz constant as obtained by Kantorovich--Rubinstein duality.
\end{abstract}

\maketitle

\section{Introduction}

The disintegration theorem is of fundamental importance in multiple areas of mathematics. Formally, the disintegration theorem concerns two probability spaces $(\XF, \AC, \mu)$ and $(\YF, \BC, \nu)$ and a measurable function $f:\XF \rightarrow \YF$, where $\nu = f_* \mu = \mu f^{-1}$. Provided that $(\XF,\AC,\mu)$ and $(\YF,\BC,\nu)$ are sufficiently nice (for instance, Polish spaces with their Borel $\sigma$-algebras \cite[pp.\ 78--80]{dellacherie}), the disintegration theorem guarantees the existence of a measurable family $\{\mu_y\}_{y\in\YF}$ of probability measures such that $\mu_y$ is supported on $f^{-1}\{y\}$ for $\nu$-almost every $y\in\YF$, and for which one has the disintegration formula
\begin{equation}\label{eq:disintegration}
\mu = \int\!\mu_y\,\nu(dy).
\end{equation}
The applications of this theorem are manifold, ranging from probability theory (via its use in the definition of conditional probability \cite{chang_pollard_disintegration}) to differential geometry (wherein it is implicit in the notion of ``integration over the fibre" \cite{bott_tu}). In all cases, the utility of the theorem stems from its realisation of the global measure $\mu$ as being ``patched together" from the local measures $\mu_y$ on the non-overlapping domains $f^{-1}\{y\}$.

This patching together of local objects into a single global object is a common necessity in mathematics. However, there are interesting cases in which the domains over which the local objects are defined \emph{overlap}. Consider, for instance, the problem of approximating integrals with respect to a probability measure $\mu$ on a compact metric space $\XF$ by averages with respect to an empirical measure $\hat{\mu}_n = n^{-1}\sum_{i=1}^n\delta_{x_i}$ associated to a finite sample of points $\{x_i\}_{i=1}^n$ from $\text{supp}(\mu)$. It follows from Kantorovich--Rubinstein duality that for any Lipschitz function $f$ on $\XF$ with Lipschitz constant $\|f\|_{\text{Lip}}$, one has
\begin{equation}\label{eq:kantorovichRubinstein}
\bigg|\int\!f(x)\,\mu(dx)-\int\!f(x)\,\hat{\mu}_n(dx)\bigg|\leq \|f\|_{\text{Lip}}W_1(\mu,\hat{\mu}_n),
\end{equation}
where $W_1$ denotes Wasserstein $1$-distance. 
Equation \eqref{eq:kantorovichRubinstein} has recently been utilised in the theory of deep learning \cite{buchanan_multiple_manifold,maetal} as a bound on the ``generalisation gap" between the performance $f$ of a model on the training sample $\hat{\mu}_n$ and its performance on the population $\mu$. It is of key importance in this application that the bound be applicable even to functions $f$ which depend on the training sample $\hat{\mu}$, since $\hat{\mu}$ is used in fitting the model.

However, Equation \eqref{eq:kantorovichRubinstein} is \emph{worst-case}, depending on the \emph{global} Lipschitz constant of $f$, and it is not difficult to come up with sample-dependent functions for which this bound is arbitrarily bad\footnote{Let $\XC = [0, 1]$ with Lebesgue measure $\mu$, and let $0 < x_1 < x_2 < 1$ be any two consecutive points in $\{x_i\}_{i=1}^n$. Let $L, \epsilon > 0$, and define $c:=\frac{x_1+x_2}{2}$ and $\delta:=\min\left\{\frac{x_2-x_1}{2},\sqrt{\frac{\epsilon}{L}}\right\}$. Then $f(x):=\begin{cases} L(\delta-|x-c|)&\text{ if $|x-c|\leq \delta$}\\0&\text{ otherwise}\end{cases}$ has $0<\int_{[0,1]}\!f(x)\,dx\leq \epsilon$, $\frac{1}{n}\sum_{i=1}^nf(x_i) = 0$, and $\|f\|_{\text{Lip}} = L$.}. For such functions, the bound could be improved if it depended only on an ``average" Lipschitz constant of $f$. To this end, suppose that there were some small $r>0$ for which it were possible to find a family $\{\mu_i\}_{i=1}^n$ of probability measures such that each $\mu_i$ were supported on the closed ball $B(x_i,r)$ of radius $r$ centred at $x_i$, and for which $\mu = n^{-1}\sum_{i=1}^n\mu_i$ (note that $\{B(x_i,r)\}_{i=1}^n$ is then necessarily a cover of $\text{supp}(\mu)$, which we assume without loss of generality to be all of $\XF$). Then, for any Lipschitz function~$f$, using $\|f\|_{\text{Lip},B(x_i,r)}$ to denote the local Lipschitz constant of $f$ over $B(x_i,r)$, one has
\begin{align}\label{eq:averagecase}
\bigg|\int\!f(x\,\mu(dx)-\int\!f(x)\,\hat{\mu}_n(dx)\bigg|\leq \frac{1}{n}\sum_{i=1}^n\int\!|f(x)-f(x_i)|\mu_i(dx)\leq r\cdot \frac{1}{n}\sum_{i=1}^n\|f\|_{\text{Lip},B(x_i,r)}.
\end{align}
Were it to be the case that $r \equiv r_n = O(W_1(\mu,\hat{\mu}_n))$ (an assertion we will prove in certain cases), we would have achieved our desire to obtain an ``average case" improvement on the worst-case \eqref{eq:kantorovichRubinstein}. It is evident that the condition
\begin{equation}\label{eq:partition}
    \mu = \frac{1}{n}\sum_{i=1}^n\mu_i
\end{equation}
used in the derivation of this bound closely resembles Equation \eqref{eq:disintegration}. Whereas in Equation \eqref{eq:disintegration} the $\mu_y$ were supported on the \emph{partition} of $\XF$ by the fibres $f^{-1}\{y\}$, in Equation \eqref{eq:partition} the $\mu_i$ are supported on the \emph{cover} of $\XF$ by the potentially \emph{overlapping} sets $B(x_i,r)$. That the $B(x_i,r)$ overlap implies that the map $F$ sending $x\in \XF$ to those $i\in\{1,\dots,n\}$ such that $x\in B(x_i,r)$ is a multifunction; hence, to ask for such a family of probability measures $\{\mu_i\}_{i=1}^n$ is precisely to ask for a \emph{disintegration of $\mu$ along the multifunction $F$}. Hence arises our desire to consider disintegrations of measures along multifunctions.

Having motivated disintegrations along multifunctions, let us now outline the content of the paper. In Section \ref{sec:disintegration}, we define disintegrations along multifunctions and prove the necessary and sufficient conditions for their existence in the setting of Polish spaces. Our definition of a disintegration of a probability measure $\mu$ on a space $\XF$ along a measurable multifunction $F:\XF\rightarrow \YF$ with respect to a probability measure $\nu$ on $\YF$ straightforwardly generalises that for single-valued functions: It is a measurable family $y\mapsto\mu_y$ of probability measures on $\XF$ satisfying the disintegration formula \eqref{eq:disintegration}, with $\mu_y$ supported on $F^{-1}\{y\}$ for $\nu$-almost all $y\in\YF$ (see Definition \ref{def:disintegration}).  Note that when $F$ is single-valued, our definition of disintegration reduces to the definition at the outset of this section, since it then follows that $\nu = \mu F^{-1}$: indeed, for $B \in \BC$ we have
\begin{align*}
    (\mu F^{-1})(B)
    &= \int\!\mu_y(F^{-1}(B))\,\nu(dy)
    = \int\!\mu_y(F^{-1}\{y\} \cap F^{-1}(B))\,\nu(dy) \\
    &= \int\!\mu_y(F^{-1}(\{y\} \cap B))\,\nu(dy)
    = \int_B \mu_y(F^{-1}\{y\})\,\nu(dy) \\
    &= \int_B 1\,\nu(dy) = \nu(B).
\end{align*}
We then give necessary and sufficient conditions for the existence of such disintegrations in the setting of Polish spaces in Theorem \ref{thm:existence}, which for the reader's convenience we summarise here as Theorem~\ref{thm:introexistence}.

\begin{theorem}\label{thm:introexistence}
    If $\XF$ and $\YF$ are Polish spaces and $F$ has closed graph, then $\mu$ admits a disintegration along $F$ with respect to $\nu$ if and only if $\mu(F^{-1}U)\geq\nu(U)$ for all open sets $U\subseteq\YF$.
\end{theorem}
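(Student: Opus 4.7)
The plan is to establish necessity by a direct integration and to reduce sufficiency to a classical marginal/coupling theorem combined with the classical disintegration theorem for single-valued functions.

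For necessity, assume we have a disintegration $\mu = \int\!\mu_y\,\nu(dy)$ with $\mu_y$ concentrated on $F^{-1}\{y\}$ for $\nu$-a.e.~$y\in\YF$. For any open $U\subseteq\YF$, whenever $y\in U$ and $\mu_y$ is concentrated on $F^{-1}\{y\}\subseteq F^{-1}U$, we have $\mu_y(F^{-1}U) = 1$. Integrating the inequality $\mu_y(F^{-1}U) \geq \mathbf{1}_U(y)$ against $\nu$ yields
\begin{equation*}
\mu(F^{-1}U) = \int\!\mu_y(F^{-1}U)\,\nu(dy) \geq \int_U\!1\,\nu(dy) = \nu(U).
\end{equation*}

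For sufficiency, the strategy is to build a coupling $\pi$ on $\XF\times\YF$ of $\mu$ and $\nu$ concentrated on the graph $G := \{(x,y) : y\in F(x)\}$ of $F$, then disintegrate $\pi$ along its second marginal. The existence of such a $\pi$ is exactly the content of Strassen's marginal theorem in its version for Polish spaces due to Kellerer: for a closed set $G\subseteq\XF\times\YF$, a coupling of $\mu$ and $\nu$ supported on $G$ exists iff $\mu(\text{proj}_\XF(G\cap(\XF\times U)))\geq \nu(U)$ for every open $U\subseteq\YF$. Since $G$ is closed by hypothesis and $\text{proj}_\XF(G\cap(\XF\times U)) = F^{-1}U$, the assumed inequality furnishes $\pi$.

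Applying the classical disintegration theorem to the projection $\text{proj}_\YF : \XF\times\YF\to\YF$ (under which $\pi$ pushes forward to $\nu$), we obtain a measurable family $\{\pi_y\}_{y\in\YF}$ of probability measures on $\XF\times\YF$ with $\pi_y$ concentrated on $\XF\times\{y\}$ and $\pi = \int\!\pi_y\,\nu(dy)$. Define $\mu_y := (\text{proj}_\XF)_*\pi_y$. Since $\pi$ is supported on $G$, for $\nu$-a.e.~$y$ the measure $\pi_y$ is supported on $G\cap(\XF\times\{y\}) = F^{-1}\{y\}\times\{y\}$, so $\mu_y$ is supported on $F^{-1}\{y\}$. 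Pushing $\pi = \int\!\pi_y\,\nu(dy)$ forward under $\text{proj}_\XF$ yields $\mu = \int\!\mu_y\,\nu(dy)$, and the measurability of $y\mapsto\mu_y$ is inherited from that of $y\mapsto\pi_y$ via continuity of pushforward.

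The main obstacle is the invocation of Strassen's/Kellerer's theorem, which is the substantive input: the closedness of the graph is precisely what allows this existence result to apply, and it is unsurprising that the condition $\mu(F^{-1}U)\geq\nu(U)$ for open $U$ appears there verbatim. Everything else is a routine combination of coupling and disintegration, though care is needed to verify the support condition for $\pi_y$ simultaneously for $\nu$-a.e.~$y$ (which follows from the fact that the graph $G$ is closed, hence in particular Borel, so $\pi(G^c) = 0$ forces $\pi_y(G^c) = 0$ for $\nu$-a.e.~$y$).
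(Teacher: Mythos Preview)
Your proposal is correct and follows essentially the same approach as the paper: prove necessity by integrating $\mu_y(F^{-1}U)\geq \mathbf{1}_U(y)$ against~$\nu$, and prove sufficiency by observing that the hypothesis is exactly Strassen's condition for the closed set $\mathrm{Graph}(F)$, obtaining a coupling of~$\mu$ and~$\nu$ supported on the graph, disintegrating that coupling along the projection to~$\YF$, and pushing the fibre measures forward to~$\XF$. The paper's proof differs only in notation and in the level of detail given for the final support verification.
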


In Section \ref{sec:disintegrability}, we consider the specific case in which $(\XF,\mu)$ carries a metric $d$ generating the Borel $\sigma$-algebra, and $\YC = [n]:=\{1,\dots,n\}$ for a natural number $n$, with $\nu = \nu_{[n]}$ the uniform measure on $[n]$. In this case, any $n$-tuple of i.i.d.\ observations $X_{[n]}:=(X_1,\dots,X_n)$ from $\mu$ on $\XF$ and radius $r>0$ defines a random multifunction $F_{r,X_{[n]}}:\XF\rightarrow \YF$ by sending $x\in \XF$ to the set of indices $i\in[n]$ such that $x\in B(X_i,r)$, and we inquire about the probability that the conditions of Theorem \ref{thm:introexistence} are satisfied by $F_{r,X_{[n]}}$. Given functions $\epsilon(n),r(n)\xrightarrow[n\to\infty]{}0$, we declare $(\XF,\mu)$ to be \emph{asymptotically disintegrable with rate $(\epsilon, r)$} if, for all sufficiently large $n$, the probability measure $\mu$ disintegrates along $F_{r(n),X_{[n]}}$ with respect to $\nu_{[n]}$ with probability at least $1-\epsilon(n)$. Proposition \ref{prop:counterexample} gives a simple example of a space 
that is \emph{not} asymptotically disintegrable (with any rate); we leave the rather nontrivial establishment of \emph{positive} examples of asymptotic disintegrability until the final Section~\ref{sec:examples}. We conclude Section \ref{sec:disintegrability} by motivating these (difficult) examples with (easy) applications (summarised here for the reader's convenience as Theorem~\ref{thm:applications}) that follow from the abstract notion of asymptotic disintegrability.

\begin{theorem} \label{thm:applications}
    Suppose that $(\XF,\mu)$ is asymptotically disintegrable with rate $(\epsilon,r)$. For $n$ sufficiently large, let $X_1,\dots,X_n$ be i.i.d.\ samples from $\mu$ with empirical measure $\hat{\mu}:=\frac{1}{n}\sum_{i=1}^n\delta_{X_i}$. Then:
    \begin{enumerate}
        \item[{\rm (i)}] For any $p\in[1,\infty)$, with probability at least $1-\epsilon(n)$, one has
        \begin{equation}\label{eq:wbound}
        W_p(\mu,\hat{\mu}_n)\leq r(n),
        \end{equation}
        where $W_p$ is the Wasserstein $p$-distance.  (See Proposition \ref{prop:wasserstein} for details.)
        \item[{\rm (ii)}] With probability at least $1-\epsilon(n)$, one has
        \begin{equation}\label{eq:avgbound}
        \bigg|\int\!f(x)\,\mu(dx)-\int\!f(x)\,\hat{\mu}_n(dx)\bigg|\leq r(n)\cdot\frac{1}{n}\sum_{i=1}^n\|f\|_{\text{Lip},B(X_i,r(n))}
        \end{equation}
        for any Lipschitz function $f:\XF\rightarrow\RB$.  (See Proposition \ref{prop:averagecase} for details.)
    \end{enumerate}
\end{theorem}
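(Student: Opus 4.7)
The plan is to condition on the high-probability event supplied by asymptotic disintegrability and then re-run the coupling and Lipschitz calculations already sketched in \eqref{eq:averagecase}. For $n$ large enough, there is an event $E_n$ of probability at least $1-\epsilon(n)$ on which $\mu$ disintegrates along $F_{r(n),X_{[n]}}$ with respect to $\nu_{[n]}$; unpacking Definition \ref{def:disintegration}, this yields probability measures $\mu_1,\dots,\mu_n$ with $\mu_i$ supported on $F_{r(n),X_{[n]}}^{-1}\{i\}=B(X_i,r(n))$ and
\[
\mu=\int\!\mu_y\,\nu_{[n]}(dy)=\frac{1}{n}\sum_{i=1}^n\mu_i,
\]
the last identity being the specialisation of \eqref{eq:disintegration} to the uniform measure on $[n]$. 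All statements below are made on $E_n$.

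For \textbf{(i)}, I would introduce the explicit coupling
\[
\pi:=\frac{1}{n}\sum_{i=1}^n\delta_{X_i}\otimes\mu_i
\]
on $\XF\times\XF$. Its first and second marginals are $\hat{\mu}_n$ and $\mu$ respectively, so the variational characterisation of Wasserstein $p$-distance, combined with the inclusion $\mathrm{supp}(\mu_i)\subseteq B(X_i,r(n))$, yields
\[
W_p^p(\mu,\hat{\mu}_n)\leq\int\!d(x,y)^p\,\pi(dx,dy)=\frac{1}{n}\sum_{i=1}^n\int\!d(X_i,y)^p\,\mu_i(dy)\leq r(n)^p,
\]
which is \eqref{eq:wbound} after taking $p$-th roots.

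For \textbf{(ii)}, I would use the disintegration together with the trivial identity $f(X_i)=\int\!f(X_i)\,\mu_i(dx)$ to rewrite the error as
\[
\bigg|\int\!f\,d\mu-\int\!f\,d\hat{\mu}_n\bigg|=\bigg|\frac{1}{n}\sum_{i=1}^n\int\!\bigl(f(x)-f(X_i)\bigr)\,\mu_i(dx)\bigg|\leq\frac{1}{n}\sum_{i=1}^n\int\!|f(x)-f(X_i)|\,\mu_i(dx),
\]
and then bound the integrand via $|f(x)-f(X_i)|\leq\|f\|_{\text{Lip},B(X_i,r(n))}\cdot d(x,X_i)\leq r(n)\cdot\|f\|_{\text{Lip},B(X_i,r(n))}$ for $\mu_i$-a.e.\ $x$, which gives \eqref{eq:avgbound} directly.

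Neither step presents a real obstacle: both conclusions are simply the introduction's motivating calculation \eqref{eq:averagecase} made rigorous by the guaranteed disintegration, with the sample-dependent radius $r(n)$ in place of the hypothetical $r$, and with the existence of the $\mu_i$ now supplied by Theorem \ref{thm:introexistence} applied to $F_{r(n),X_{[n]}}$ on the event $E_n$. All of the genuinely nontrivial work lies upstream — in proving that particular spaces are asymptotically disintegrable with useful rates, which is deferred to Section \ref{sec:examples}.
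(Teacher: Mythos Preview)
Your proposal is correct and follows essentially the same approach as the paper: the paper constructs the identical coupling $\lambda_n=\frac{1}{n}\sum_{i=1}^n(\mu_i\times\delta_{X_i})$ for part~(i) and carries out the same disintegration-plus-local-Lipschitz estimate for part~(ii). The only cosmetic difference is the order of the two factors in the coupling.
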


 In Section \ref{sec:examples}, we give results that we found by far the most challenging to prove, namely, Theorems \ref{T:path main}, \ref{T:circle}, \ref{T:graph main}, and \ref{T:dd main}, which prove that certain simple spaces \emph{are} asymptotically disintegrable. Theorem \ref{thm:introexamples} below summarises these results.

\begin{theorem}\label{thm:introexamples}
    For any $\alpha\in(0,\infty)$:
    \begin{enumerate}    
        \item[{\rm (i)}] The interval $[0,1]$ with its Euclidean metric and absolutely continuous measure with density bounded away from both~$0$ and~$\infty$ is asymptotically disintegrable with rate 
        \begin{equation} \label{big-oh}
        \mbox{$\left(O\left(n^{-\alpha} \right),O\!\left(\sqrt{\frac{\ln n}{n}}\right)\right)$}. 
        \end{equation}
        \item[{\rm (ii)}] The circle in $\RB^2$ with normalised geodesic metric and absolutely continuous measure with density bounded away from both~$0$ and~$\infty$ is asymptotically disintegrable with rate~\eqref{big-oh}.
        \item[{\rm (iii)}] Any finite connected simple graph $(V,E)$ embedded in $\RB^2$ with appropriate graph metric and absolutely continuous measure with with density bounded away from both~$0$ and~$\infty$ is asymptotically disintegrable with rate~\eqref{big-oh}.
        \item[{\rm (iv)}] For any natural number $D\geq 2$, the unit cube $[0,1]^D$ with its $\ell^{\infty}$ metric and any absolutely continuous measure with with density bounded away from both~$0$ and~$\infty$ is absolutely disintegrable with rate $\left(O(n^{-\alpha}),O\!\left((\frac{\ln n}{n})^{1 / (2D)}\right)\right)$.
    \end{enumerate}
\end{theorem}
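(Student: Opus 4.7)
The plan is to invoke Theorem~\ref{thm:introexistence}, which reduces asymptotic disintegrability with rate $(\epsilon,r)$ to the following concrete Hall-type verification: with probability at least $1-\epsilon(n)$ over the i.i.d.\ sample $X_{[n]}$, the inequality
\[
\mu\Bigl(\bigcup_{i\in S} B(X_i, r(n))\Bigr)\ \geq\ \frac{|S|}{n}
\]
must hold for every subset $S\subseteq[n]$ (every subset of the discrete space $[n]$ being open, and the multifunction's fibres being $F^{-1}\{i\}=B(X_i,r(n))$). Each part is a separate verification of this Hall condition for the relevant geometry.

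For parts (i), (ii), and (iii) I would exploit the (locally) one-dimensional structure. In (i), the probability integral transform reduces to Lebesgue measure on $[0,1]$ up to a bounded distortion of $r$ by the density bounds. Sorting the sample into order statistics $X_{(1)}<\cdots<X_{(n)}$, the union $\bigcup_{i\in S}B(X_i,r)$ decomposes into intervals, so Hall's condition reduces to $X_{(b)}-X_{(a+1)}+2r\geq (b-a)/n$ uniformly in $0\leq a<b\leq n$ (with boundary corrections at $0$ and $1$). This is a uniform bound on order-statistic gaps and follows from the Dvoretzky--Kiefer--Wolfowitz inequality with $r=C_\alpha\sqrt{\ln n/n}$ for $C_\alpha$ chosen as a function of $\alpha$, since DKW gives a failure probability $\leq 2\exp(-2C_\alpha^2\ln n)=2n^{-2C_\alpha^2}$. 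Part (ii) repeats this argument on the circle using cyclic ordering and has no boundary to worry about. For (iii), I would decompose the graph into its finitely many edges, condition on the multinomial allocation of samples to edges, apply (i) edge by edge, and handle vertices by observing that since $r(n)\to 0$, the balls around distinct vertices eventually become disjoint, reducing the problem near each vertex to a finite number of independent one-dimensional subproblems on the incident edges.

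Part (iv) is the main obstacle and requires a fundamentally different approach, since a union of $\ell^\infty$-balls in $D\geq 2$ dimensions is a rectilinear set not controlled by order statistics. I would impose a grid of axis-aligned cubes of side $s\sim(\ln n/n)^{1/(2D)}$, giving roughly $(n/\ln n)^{1/2}$ cells each with expected count $\sqrt{n\ln n}$, and use a Bernstein--Chernoff union bound over cells to show that every cell's population lies within a $(1\pm o(1))$ multiplicative factor of its expectation with probability at least $1-O(n^{-\alpha})$ after tuning constants. Taking $r$ to be a suitably large fixed multiple of $s$, so that each $\ell^\infty$-ball $B(X_i,r)$ contains the cell of $X_i$ together with a surrounding layer of cells, I would then bound $\mu(\bigcup_{i\in S}B(X_i,r))$ below by the $\mu$-measure of a cellular superset and compare it to $|S|/n$ via the concentration estimate.

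The most delicate step is this final passage from ``every cell is well populated'' to the strict Hall inequality. Since a family of sample points may hit only a few cells while being large in total, the naive bound using only the cells hit by $S$ yields $|S|/(n(1+o(1)))$, falling short of the required $|S|/n$ by a multiplicative $o(1)$. Closing this gap forces $r/s$ to be taken large enough that the surrounding cellular layer absorbed by each ball supplies enough extra mass to compensate, which is what ultimately dictates the scale $s\sim(\ln n/n)^{1/(2D)}$ rather than the tighter $(\ln n/n)^{1/D}$ suggested by cell-occupancy alone; this is the reason the exponent in (iv) is halved relative to what the one-dimensional parts achieve.
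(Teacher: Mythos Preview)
Your approach to~(i) and~(ii) via the probability integral transform and the Dvoretzky--Kiefer--Wolfowitz inequality is correct and is in fact cleaner than the paper's route, which reaches the same order-statistic inequality $X_{(b)}-X_{(a+1)}+2r\geq(b-a)/n$ but bounds its failure probability by identifying $X_{(b)}-X_{(a+1)}$ with a Beta random variable and applying a Chernoff bound to the associated binomial tail.  DKW delivers the uniform control in one stroke.

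For~(iii), your edge-by-edge decomposition has a genuine gap.  Conditioning on the multinomial allocation $(n_e)_{e\in E}$ and applying~(i) to each edge yields $\mu_e\bigl(\bigcup_{i\in S_e}B_i\bigr)\geq|S_e|/n_e$ for the \emph{normalised} edge measure $\mu_e=\mu|_e/\mu(e)$, hence $\mu\bigl(\bigcup_{i\in S_e}B_i\cap e\bigr)\geq\mu(e)\,|S_e|/n_e$.  Summing over edges, the global Hall condition $\sum_e\mu(e)|S_e|/n_e\geq\sum_e|S_e|/n$ would require $n_e\leq n\mu(e)$ for every edge contributing to~$S$, which cannot hold simultaneously for all edges (and typically fails for about half of them).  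The slack from balls spilling across vertices is of order~$r$ per vertex, whereas the deficit $\sum_e(|S_e|/n)(1-n\mu(e)/n_e)$ is of order $\sqrt{\ln n/n}$ globally and need not localise near vertices at all; your remark about balls around distinct vertices becoming disjoint does not address this.  The paper avoids the denominator problem entirely by a monotonicity trick: it first replaces the graph by a spanning tree with extra pendant edges (which only \emph{enlarges} the graph metric and hence \emph{shrinks} every ball), and then inductively ``unhooks'' a leaf-to-branch path and glues it to another leaf, reducing any tree to a path while only shrinking the measures of the relevant ball-unions.  This cascade of metric-increasing transformations is the key device.

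For~(iv), the paper's argument is entirely different from your grid approach: it partitions $[0,1]^D$ into $k_n^{D-1}$ thin tubes of cross-section $1/k_n$, concatenates them end-to-end into a single interval of length $k_n^{D-1}$, observes that every $\ell^\infty$-ball of radius $r(n)\geq 2/k_n$ in the cube contains the corresponding $r(n)$-segment of the unrolled interval, and then applies the one-dimensional Theorem directly.  The exponent $1/(2D)$ falls out immediately from solving $r\sim k_n^{D-1}\sqrt{\ln n/n}$ together with $k_n\sim 1/r$.  Your grid-plus-concentration sketch, if pushed through, balances cell size~$s$ against relative fluctuation $\delta\sim\sqrt{\ln n/(ns^D)}$ and an isoperimetric layer estimate, which naturally yields $r\sim(\ln n/n)^{1/(D+2)}$ rather than $1/(2D)$; your final paragraph's assertion that the compensation mechanism forces exactly the exponent $1/(2D)$ is not substantiated by the argument you outline.
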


As will be made clear in Section~\ref{sec:wdre}, our bound $r(n)$ in~\eqref{eq:wbound} for $W_p(\mu,\hat{\mu}_n)$ either matches or improves the best high-probability rates in the literature for our unit cube examples for large values of~$p$, and is in general worse than the best existing rates by no more than a factor of 2 in the exponent on $n$. However, it is clear that asymptotic disintegrability is not \emph{necessary} to achieve good high probability bounds, since the same Wasserstein distance rate for $p = 1$ we obtain for $\XF = [0, 1]$ can be achieved using existing techniques \cite[first display on p.~2305]{boissard} even for the non-asymptotically disintegrable 
example considered in Proposition \ref{prop:counterexample}\footnote{Connectedness \emph{is}, however, \emph{necessary} to obtain the best rate for $\EB W_p(\mu,\hat{\mu}_n)$ when $p>1$, see \cite[Theorem 5.6]{wasserstein1d}.}. We conjecture, however, that asymptotic disintegrability \emph{is} necessary for the average-case bound \eqref{eq:avgbound}; while we cannot yet prove this conjecture, Proposition \ref{prop:averagecasecounterexample} demonstrates suggestively that our
counterexample to asymptotic disintegrability in Proposition \ref{prop:counterexample} \emph{also} provides a counterexample to \eqref{eq:avgbound}, and for essentially the same reason.

We conjecture that many more examples are also asymptotically disintegrable, for instance, compact connected Riemannian manifolds with volume measure.  We also conjecture that our rate $O\!\left((\frac{\ln n}{n})^{1 / (2D)}\right)$ for $[0,1]^D$ in the case $D\geq 2$ can be tightened to $O\!\left((\frac{\ln n}{n})^{1 / D}\right)$, thus enabling new proofs of the best known rates $r(n)$ (up to logarithmic factors) for the high-probability Wasserstein distance bounds \eqref{eq:wbound} (see~\cite{weedbach, lei}). 
These conjectures are left as open problems.

\section{The disintegration theorem for multifunctions}\label{sec:disintegration}

Recall that a \emph{multifunction} $F:\XF\rightarrow\YF$ between sets $\XF$ and $\YF$ is an assignment to each $x\in\XF$ of a (possibly empty) subset $F(x)\subseteq \YF$. By a \emph{Polish probability space} we mean a triple $(\XF,\AC,\mu)$, where $\XF$ is a Polish space, $\AC$ its Borel $\sigma$-algebra, and $\mu$ a Borel probability measure. In this section, we prove necessary and sufficient conditions for disintegrability of a measure along a multifunction in the context of Polish probability spaces.  We begin by defining the disintegration of a probability measure along a multifunction between probability spaces as follows.

\begin{definition}\label{def:disintegration}
    Let $(\XF,\AC,\mu)$ and $(\YF,\BC,\nu)$ be probability spaces, and let $F:\XF\rightarrow \YF$ be a multifunction which is \emph{measurable} in the sense that $F^{-1}B := \{x \in \XF:F(x) \cap B \neq \varnothing\} \in \AC$ for all $B\in\BC$. A \emph{disintegration of $\mu$ along $F$ with respect to $\nu$} is a map $y\mapsto\mu_y$ from $\YF$ to the space of Borel probability measures on $\XF$ such that:
    \begin{enumerate}
        \item[{\rm (i)}] $y\mapsto\mu_y$ is \emph{measurable}, in the sense that $y\mapsto\mu_y(A)$ is $\BC$-measurable for any $A\in\AC$;
        \item[{\rm (ii)}] $\mu_y$ is supported on the fibre $F^{-1}\{y\} = \{x\in\XF:y\in F(x)\}$ for $\nu$-almost every $y\in \YF$; and
        \item[{\rm (iii)}] one has the disintegration formula $\mu = \int\!\mu_y\,\nu(dy)$.
    \end{enumerate}
\end{definition}

Our multifunction disintegration theorem (Theorem \ref{thm:existence}) is a consequence of (i)~Strassen's coupling theorem on the existence of probability measures with given marginals and (ii)~the classical disintegration theorem. We recall these two theorems in the setting relevant to our needs next.

\begin{theorem}[Strassen's theorem for Polish probability spaces]
    Let $(\XF, \AC, \mu)$ and $(\YF, \BC, \nu)$ be Polish probability spaces, and let $\omega$ be a nonempty, closed subset of $\XF \times \YF$.  Then there exists a probability measure $\lambda$ on $\XF\times \YF$ (with its Borel $\sigma$-algebra) with marginals $\mu$ and $\nu$ such that $\lambda(\omega) = 1$ if and only if
    \begin{equation}\label{eq:strassens}
    \mu\big(\pi_{\XF}(\omega\cap(\XF\times U))\big)\geq \nu(U)
    \end{equation}
    for all open sets $U\subseteq \YF$, where $\pi_{\XF}:\XF\times \YF\rightarrow \XF$ is the projection of $\XF \times \YF$ onto $\XF$.
\end{theorem}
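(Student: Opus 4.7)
The plan is to handle necessity by a direct computation and sufficiency by a discretize-and-pass-to-the-limit argument. For necessity, suppose $\lambda$ is a coupling of $\mu$ and $\nu$ with $\lambda(\omega)=1$. For any open $U\subseteq\YF$ the set $\omega\cap(\XF\times U)$ is Borel, so its projection onto $\XF$ is analytic and in particular $\mu$-measurable. Using $\lambda(\omega)=1$ together with the containment $\omega\cap(\XF\times U)\subseteq \pi_{\XF}(\omega\cap(\XF\times U))\times \YF$, one computes
\[\nu(U)=\lambda(\XF\times U)=\lambda(\omega\cap(\XF\times U))\leq \mu\bigl(\pi_{\XF}(\omega\cap(\XF\times U))\bigr),\]
which is \eqref{eq:strassens}.

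For sufficiency, I would first treat the case of finite $\YF=\{y_1,\dots,y_k\}$, which is automatically discrete. Applied to singletons and their unions, \eqref{eq:strassens} reduces to the assertion
\[\mu\Bigl(\bigcup_{j\in J}\pi_{\XF}(\omega\cap(\XF\times\{y_j\}))\Bigr)\geq \sum_{j\in J}\nu(\{y_j\})\]
for every $J\subseteq\{1,\dots,k\}$, a measure-theoretic version of Hall's marriage condition. Existence of a coupling concentrated on $\omega$ then follows either from Hall's theorem applied to atomic approximations of $\mu$, or from a max-flow--min-cut argument, or equivalently from a Hahn--Banach separation applied to the convex cone of feasible sub-couplings inside the dual of $C_b(\XF\times\YF)$.

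For the general Polish case, I would fix a compatible complete metric on $\YF$ and, for each $n$, choose a Borel partition $\{C^n_j\}_j$ of $\YF$ into sets of diameter at most $2^{-n}$, together with distinguished points $y^n_j\in C^n_j$. Pushing $\nu$ forward to the countable discrete set $\{y^n_j\}_j$ and replacing $\omega$ by the surrogate relation $\omega_n:=\bigcup_j(\pi_{\XF}(\omega\cap(\XF\times C^n_j))\times\{y^n_j\})$ reduces, after truncating to finitely many cells of total $\nu$-mass at least $1-2^{-n}$, to a discrete problem of the previous step. I would then extract a weakly convergent subsequence of the resulting couplings $\lambda_n$ via Prokhorov tightness (automatic since the marginals are fixed), verify that the limit $\lambda$ still has marginals $\mu$ and $\nu$ by continuity of pushforward under the coordinate projections, and invoke the closed-set portmanteau inequality together with closedness of $\omega$ and the shrinking cell diameters to conclude that $\lambda(\omega)=1$.

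The hard part, in my view, is verifying in the discretization step that the open-set condition \eqref{eq:strassens} for $(\mu,\nu)$ descends to the Hall-type condition required on \emph{arbitrary} index subsets $J$ of the partition, since $\bigcup_{j\in J}C^n_j$ need not be open. I would address this by approximating each $C^n_j$ from above by open neighbourhoods whose $\nu$-mass is within $\varepsilon/k$ of $\nu(C^n_j)$, using outer regularity of $\nu$ on the Polish space $\YF$, applying \eqref{eq:strassens} to the union of these open sets over $j\in J$, and letting $\varepsilon\downarrow 0$; the continuity of measure then yields the desired discrete Hall inequality.
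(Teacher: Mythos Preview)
The paper does not actually prove this theorem: it simply cites Strassen's original \cite[Theorem~11]{strassen} and lets the parameter~$\epsilon$ there tend to zero. Your proposal instead sketches a self-contained discretise-and-limit proof, which is one of the standard routes to Strassen's theorem. The necessity computation is clean, the finite-$\YF$ step via Hall/max-flow is fine, and the tightness plus portmanteau argument for the limit works as you describe (indeed $\omega_n\subseteq\omega^{2^{-n}}$, so for each fixed $\delta>0$ one has $\lambda_n(\omega^{\delta})=1$ eventually, whence $\lambda(\omega^{\delta})=1$ for all $\delta$ and thus $\lambda(\omega)=1$ by closedness).

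There is, however, a genuine gap exactly where you flag the ``hard part''. Your outer-regularity fix does not go through as written. With $A:=\bigcup_{j\in J}C^n_j$ and open $U\supseteq A$, condition~\eqref{eq:strassens} gives
\[
\mu\bigl(\pi_{\XF}(\omega\cap(\XF\times U))\bigr)\ \geq\ \nu(U)\ \geq\ \nu(A),
\]
but shrinking $U\downarrow$ does \emph{not} force the left side to converge to $\mu\bigl(\pi_{\XF}(\omega\cap(\XF\times A))\bigr)$: the projection of a decreasing intersection is in general strictly smaller than the intersection of the projections, so the set-function $B\mapsto\pi_{\XF}(\omega\cap(\XF\times B))$ is not continuous from above. (Concretely: take $\XF=\{0\}$, $\YF=\RB$, $\omega=\{0\}\times[1,\infty)$, and $U_m=(-1/m,1/m)\cup(m,\infty)$; then $\bigcap_m U_m=\{0\}$, yet $\pi_{\XF}(\omega\cap(\XF\times U_m))=\{0\}$ for all $m$ while $\pi_{\XF}(\omega\cap(\XF\times\{0\}))=\varnothing$.) The ``continuity of measure'' you invoke is continuity of $\mu$, but what would be needed is continuity of this capacity-type functional, which fails.

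The simplest repair is to build the open approximation into the definition of $\omega_n$ rather than into the verification of Hall's condition. Choose, for each cell $C^n_j$, an open $V^n_j\supseteq C^n_j$ of diameter at most $2\cdot 2^{-n}$, and set $\omega_n:=\bigcup_j\bigl(\pi_{\XF}(\omega\cap(\XF\times V^n_j))\times\{y^n_j\}\bigr)$. Then for any $J$ the set $\bigcup_{j\in J}V^n_j$ is open, so~\eqref{eq:strassens} applies directly and yields the Hall inequality
\[
\mu\!\left(\bigcup_{j\in J}\pi_{\XF}\bigl(\omega\cap(\XF\times V^n_j)\bigr)\right)\ \geq\ \nu\!\left(\bigcup_{j\in J}V^n_j\right)\ \geq\ \sum_{j\in J}\nu(C^n_j)
\]
with no limiting step; the inclusion $\omega_n\subseteq\omega^{2\cdot 2^{-n}}$ still holds, and the rest of your argument is unchanged. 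Alternatively, one can keep your $\omega_n$ and pass from open sets to compact sets via $K_m:=\{y:d(y,K)\leq 1/m\}$ for compact $K$, using closedness of $\omega$ and compactness of $K$ to force convergence of the projections, and then use inner regularity of $\nu$; but the first fix is cleaner.
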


\begin{proof}
    This is an immediate consequence of \cite[Theorem 11]{strassen}, letting $\epsilon$ in that theorem vanish.
\end{proof}

\begin{theorem}[Disintegration theorem for Polish spaces]
\label{thm:classical}
    Let $(\XF,\AC)$ and $(\YF,\BC)$ be Polish spaces, assumed to be equipped with their Borel $\sigma$-algebras $\AC$ and $\BC$, respectively, and let $f:\XF\rightarrow\YF$ be a Borel-measurable map. Given a Borel probability measure $\mu$ on $\XF$ and $\nu:=f_*\mu$ on $\YF$, there exists a measurable family $y\mapsto\mu_y$ of probability measures on $\YF$ such that $\mu = \int\!\mu_y\,\nu(dy)$ and such that $\mu_y$ is supported on $f^{-1}\{y\}$ for $\nu$-almost all $y\in\YF$.
\end{theorem}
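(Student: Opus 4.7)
The plan is to construct the disintegration as a regular conditional probability of $\mu$ given the sub-$\sigma$-algebra $f^{-1}(\BC)$, and then to verify fibre-concentration separately. For each Borel set $A\subseteq\XF$ the conditional expectation $\EB_{\mu}[1_A\mid f^{-1}(\BC)]$ is $f^{-1}(\BC)$-measurable, so by the Doob--Dynkin lemma it factors as $\rho_A\circ f$ for some $\BC$-measurable $\rho_A:\YF\to[0,1]$ satisfying
\[
\mu\bigl(A\cap f^{-1}(B)\bigr)=\int_B\rho_A(y)\,\nu(dy)\qquad\text{for every }B\in\BC.
\]
Declaring $\mu_y(A):=\rho_A(y)$ would then give conditions (i) and (iii) of Definition~\ref{def:disintegration} by a routine monotone-class argument, provided $A\mapsto\rho_A(y)$ is genuinely a probability measure for $\nu$-a.e.~$y$. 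The subtlety is that the $\rho_A$'s are defined only up to $\nu$-null sets, one null set per~$A$, and in general there is no simultaneous countably additive choice across all uncountably many $A\in\AC$.

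This is precisely where the Polish hypothesis enters, and it is the main technical obstacle. I would invoke the fact that any Polish space is Borel-isomorphic to a Borel subset of $[0,1]$, reducing without loss of generality to $\XF=[0,1]$. For each rational $q\in\QB\cap[0,1]$ fix a version of $\rho_{[0,q]}$; outside a single $\nu$-null set $N$ (the countable union of exceptional sets coming from monotonicity, normalisation $\rho_{[0,1]}=1$, and monotone convergence along rational sequences for conditional expectations), $q\mapsto\rho_{[0,q]}(y)$ is non-decreasing on $\QB\cap[0,1]$ with limit $1$ at $q=1$. Right-continuous extension to $[0,1]$ then produces a cumulative distribution function $F_y$ whose Lebesgue--Stieltjes measure $\mu_y$ is a Borel probability on~$[0,1]$. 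Since the intervals $[0,q]$, $q\in\QB$, form a $\pi$-system generating the Borel $\sigma$-algebra, a $\pi$-$\lambda$ argument yields both $\BC$-measurability of $y\mapsto\mu_y(A)$ for every Borel $A$ and the global identity $\mu(A)=\int\mu_y(A)\,\nu(dy)$.

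For fibre-concentration, fix a countable base $\{U_n\}_{n\in\NB}$ for the topology on~$\YF$ (second countability of Polish spaces). Applying the defining identity with $A=f^{-1}(U_n)$, $B=U_n$ gives $\int_{U_n}\mu_y\bigl(f^{-1}(U_n)\bigr)\,\nu(dy)=\nu(U_n)$, which together with $\mu_y\leq 1$ forces $\mu_y\bigl(f^{-1}(U_n)\bigr)=1$ for $\nu$-a.e.~$y\in U_n$; applying it with $B=U_n^c$ forces $\mu_y\bigl(f^{-1}(U_n)\bigr)=0$ for $\nu$-a.e.~$y\in U_n^c$. Throwing away a single $\nu$-null set covering all~$n$ and using the Hausdorff identity $\{y\}=\bigcap_{n:\,y\in U_n}U_n$, a countable intersection of probability-$1$ events yields $\mu_y\bigl(f^{-1}\{y\}\bigr)=1$ for $\nu$-a.e.~$y$, completing the proof. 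The hard step throughout remains the regularisation in the second paragraph: for general probability spaces a simultaneously countably additive family $\{\rho_A\}$ need not exist, and only the Polish structure (equivalently, standard-Borelness) allows its construction, here via right-continuous rational cdfs but equivalently via Carath\'eodory extension from a countable generating algebra together with tightness of Borel probabilities on Polish spaces.
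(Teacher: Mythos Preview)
Your argument is correct and essentially complete: the three stages (Doob--Dynkin factorisation of conditional expectations, regularisation via Borel isomorphism to $[0,1]$ and rational cdfs, and fibre-concentration from a countable base) are the standard route to regular conditional distributions on Polish spaces, and you have handled the delicate points carefully. One small remark: after regularisation you implicitly use not just $\mu(A)=\int\mu_y(A)\,\nu(dy)$ but the stronger $\mu(A\cap f^{-1}(B))=\int_B\mu_y(A)\,\nu(dy)$ for all Borel $A,B$; this does follow from your $\pi$-$\lambda$ argument (fix~$B$, vary~$A$ over the generating $\pi$-system of rational intervals), but it is worth making explicit since Step~3 relies on it.

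The paper takes a quite different tack: it does not construct the disintegration at all, but simply cites Dellacherie--Meyer \cite[pp.~78--80]{dellacherie}, invoking Ulam's theorem to verify the tightness hypothesis needed there, and then quoting the relevant theorem and remark for existence and fibre-support respectively. Your approach is self-contained and makes transparent exactly where the Polish hypotheses are used (standard-Borelness of~$\XF$ for regularisation, second countability of~$\YF$ for fibre-concentration); the paper's approach is expedient given that Theorem~\ref{thm:classical} is a classical tool rather than a contribution of the paper, and avoids repeating a textbook construction. Both are legitimate; yours would be preferable in an expository or self-contained treatment.
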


\begin{proof}
This is a consequence of \cite[pp.\ 78--80]{dellacherie}. Specifically, since $\XF$ is a Polish space and $\mu$ is a finite Borel measure, it follows from Ulam's theorem \cite[Theorem 7.1.4]{dudleybook} that $\mu$ is tight. Then \cite[Theorem 72, p.~79]{dellacherie} implies the existence of a measurable family $y\mapsto\mu_y$ such that $\mu = \int\!\mu_y\,\nu(dy)$. Moreover, by \cite[(73.1), p.~80]{dellacherie} and the discussion that follows, the fact that $(\YF,\BC)$ is a Polish space with Borel $\sigma$-algebra implies that $\mu_y$ is supported on $f^{-1}\{y\}$ for $\nu$-almost all $y\in\YF$.
\end{proof}

We now come to our disintegration theorem for multifunctions. Recall that the \emph{graph} of a multifunction $F:\XF\rightarrow\YF$ is the set
\[
\text{Graph}(F):=\{(x,y)\in \XF\times\YF:y\in F(x)\}.
\]

\begin{theorem}[Disintegration theorem for multifunctions]\label{thm:existence}
    Let $(\XF,\AC,\mu)$ and $(\YF,\BC,\nu)$ be Polish probability spaces, and let $F:\XF\rightarrow \YF$ be a measurable multifunction with graph $\text{\rm Graph}(F)$ closed in the product topology on $\XF\times\YF$. Then there exists a disintegration of $\mu$ along $F$ with respect to $\nu$ if and only if
    \begin{equation}\label{eq:ours}
    \mu\big(F^{-1}U\big)\geq\nu(U)
    \end{equation}
    for all open sets $U \subseteq \YF$.
\end{theorem}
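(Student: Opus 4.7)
The plan is to obtain the disintegration by combining the two recalled ingredients: Strassen's theorem provides a probability measure $\lambda$ on $\XF \times \YF$ living on $\text{Graph}(F)$ with the prescribed marginals, and then the classical disintegration theorem, applied to the projection $\pi_{\YF}$, produces the family of fibrewise measures. Pushing these fibrewise measures down through $\pi_{\XF}$ will yield the desired family $\{\mu_y\}_{y\in\YF}$ on $\XF$.

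For the \emph{only if} direction, suppose a disintegration $y\mapsto\mu_y$ exists. For any open $U\subseteq\YF$ and any $y\in U$, the inclusion $F^{-1}\{y\}\subseteq F^{-1}U$ gives $\mu_y(F^{-1}U)\geq \mu_y(F^{-1}\{y\}) = 1$ for $\nu$-a.e.\ such $y$. Integrating the disintegration formula against the indicator of $F^{-1}U$ then yields $\mu(F^{-1}U)\geq \int_U \mu_y(F^{-1}U)\,\nu(dy) \geq \nu(U)$.

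For the \emph{if} direction, I would first verify the trivial but crucial identity
\[
\pi_{\XF}\big(\text{Graph}(F)\cap(\XF\times U)\big) = F^{-1}U,
\]
so that hypothesis \eqref{eq:ours} is exactly Strassen's hypothesis \eqref{eq:strassens} applied to the closed set $\omega = \text{Graph}(F)$ (which is nonempty since $\nu(\YF) = 1$ forces $F^{-1}\YF \neq \varnothing$). Strassen's theorem then produces a Borel probability measure $\lambda$ on the Polish space $\XF\times\YF$ with marginals $\mu$ and $\nu$ and with $\lambda(\text{Graph}(F)) = 1$. Since $(\pi_{\YF})_*\lambda = \nu$, the classical disintegration theorem (Theorem~\ref{thm:classical}) applied to $\pi_{\YF}:\XF\times\YF\rightarrow\YF$ yields a measurable family $y\mapsto\lambda_y$ of Borel probability measures on $\XF\times\YF$, with $\lambda_y$ supported on $\pi_{\YF}^{-1}\{y\} = \XF\times\{y\}$ for $\nu$-a.e.\ $y$, and with $\lambda = \int\!\lambda_y\,\nu(dy)$. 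Set
\[
\mu_y := (\pi_{\XF})_*\lambda_y.
\]

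To finish, I would check the three clauses of Definition~\ref{def:disintegration}. Measurability of $y\mapsto\mu_y(A) = \lambda_y(\pi_{\XF}^{-1}A)$ for $A\in\AC$ follows from measurability of $y\mapsto\lambda_y$. The disintegration formula $\mu = \int\!\mu_y\,\nu(dy)$ follows by pushing $\lambda = \int\!\lambda_y\,\nu(dy)$ forward through $\pi_{\XF}$ and using that $(\pi_{\XF})_*\lambda = \mu$. For the support condition, on the $\nu$-conull set where both $\lambda_y(\text{Graph}(F)) = 1$ (which holds $\nu$-a.e.\ since $\lambda(\text{Graph}(F)) = 1$) and $\lambda_y(\XF\times\{y\}) = 1$ simultaneously hold, $\lambda_y$ is concentrated on $\text{Graph}(F)\cap(\XF\times\{y\}) = F^{-1}\{y\}\times\{y\}$, whose image under $\pi_{\XF}$ is $F^{-1}\{y\}$; hence $\mu_y(F^{-1}\{y\}) = 1$. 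The only mildly subtle point is the simultaneous validity of these two full-measure assertions for $\nu$-a.e.\ $y$, which requires the standard fact that $\lambda(B) = \int\lambda_y(B)\,\nu(dy)$ for every Borel $B\subseteq \XF\times\YF$, applied to $B = \text{Graph}(F)$; since $\text{Graph}(F)$ is closed and hence Borel, this presents no difficulty. The main conceptual step is recognising that the closed-graph hypothesis is precisely what unlocks Strassen's theorem, reducing the multifunction problem to a single-valued disintegration on the product space.
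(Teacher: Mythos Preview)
Your proposal is correct and follows essentially the same route as the paper's proof: identify \eqref{eq:ours} with Strassen's condition via $\pi_{\XF}(\text{Graph}(F)\cap(\XF\times U)) = F^{-1}U$, obtain a coupling $\lambda$ supported on $\text{Graph}(F)$, disintegrate $\lambda$ along $\pi_{\YF}$, and push the fibre measures down via $\pi_{\XF}$. Your treatment is in fact slightly more careful than the paper's in two places---you note explicitly that $\text{Graph}(F)$ is nonempty (needed to invoke Strassen's theorem as stated) and you spell out why $\lambda_y(\text{Graph}(F)) = 1$ holds $\nu$-a.e.---but the argument is otherwise the same.
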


\begin{proof}
    Note that since $F$ is measurable, $F^{-1}U$ is Borel for any open set $U\subseteq\YF$, so that the left side of \eqref{eq:ours} is well defined. Necessity of \eqref{eq:ours} is then easy. Specifically, suppose that $\{\mu_y\}_{y\in\YF}$ is a disintegration of $\mu$ along $F$ with respect to $\nu$, and let $U \subseteq \YF$ be open. Then
    \begin{align*}
        \mu\big(F^{-1}U\big) 
        &= \int\!\mu_y\big(F^{-1}U\big)\,\nu(dy) \\ 
        &\geq \int_U\mu_y\big(F^{-1}U\big)\,\nu(dy) \\
        &=\int_U1\,\nu(dy) = \nu(U),
    \end{align*}
    where the first equality follows from the disintegration formula and the second equality follows from the fact that $\mu_y$ is supported on $F^{-1}\{y\}$ for $\nu$-almost all $y\in \YF$. 
    
    To see that \eqref{eq:ours} is sufficient, let $U\subseteq \YF$ be open, and note that
    \[
    \pi_{\XF}\big(\text{Graph}(F)\cap(\XF\times U)\big) = \{x\in \XF:F(x)\cap U\neq\varnothing\} = F^{-1}U.
    \]
    Thus our condition \eqref{eq:ours} is precisely Strassen's condition \eqref{eq:strassens} with $\omega = \text{Graph}(F)$. Consequently, assuming \eqref{eq:ours} enables us to apply Strassen's theorem to give a probability measure $\lambda$ on $\XF\times \YF$ with marginals $\mu$ and $\nu$ such that $\lambda(\text{Graph}(F)) = 1$. Apply the classical disintegration theorem (Theorem \ref{thm:classical}) to the measurable map $\pi_{\YF}:\XF\times\YF\rightarrow\YF$ to obtain a measurable family $\{\lambda_y\}_{y\in\YF}$ of probability measures on $\XF\times\YF$ such that $\lambda = \int\!\lambda_y\,\nu(dy)$ and $\lambda_y$ is supported on $\pi_{\YF}^{-1}\{y\}$ for $\nu$-almost every $y \in \YF$. Set $\mu_y:=(\pi_{\XF})_*\lambda_y$ for each $y\in\YF$. Then measurability of the family $y\mapsto\mu_y$ follows from measurability of $y\mapsto\lambda_y$ and measurability of $\pi_{\XF}$. Moreover, the disintegration formula for $\mu$ in terms of $\{\mu_y\}_{y\in\YF}$ follows from the disintegration formula for $\lambda$ in terms of $\{\lambda_y\}_{y\in\YF}$:
    \[
    \mu 
    = (\pi_{\XF})_*\lambda 
    = (\pi_{\XF})_*\int\!\lambda_y\,\nu(dy) 
    = \int\!(\pi_{\XF})_*\lambda_y\,\nu(dy) 
    = \int\!\mu_y\,\nu(dy).
    \]
    Finally, observe that since $\lambda(\text{Graph}(F)) = 1$, one must have $\lambda_y(F^{-1}\{y\}\times\{y\}) = 1$ for $\nu$-almost every $y \in \YF$, so that for such $y$ we have
    \[
    \mu_y(F^{-1}\{y\}) 
    = \lambda_y\big(\pi_{\XF}^{-1}F^{-1}\{y\}\big) 
    = \lambda_y\big(F^{-1}\{y\}\times\YF\big) 
\geq \lambda_y\big(F^{-1}\{y\}\times\{y\}\big) 
    = 1.
    \]
    Thus $y\mapsto\mu_y$ is a disintegration of $\mu$ along $F$ with respect to $\nu$.
\end{proof}

\section{Applications}\label{sec:disintegrability}

The purpose of this section is to explicate applications of Theorem \ref{thm:existence} to statistics, by considering the special case where the multifunction is defined on a metric space by membership of balls with random centres. Specifically, we consider the following (from what we can determine, non-standard) specification of a metric Polish probability space.

\begin{definition}
    A \emph{metric Polish space} is a couple $(\XF,d)$, where $\XF$ is a Polish space and $d$ is a metric on $\XF$ which generates the topology of $\XF$. (\emph{We do not insist that $d$ be complete.}) A \emph{metric Polish probability space} is a triple $(\XF,d,\mu)$, where $(\XF,d)$ is a metric Polish space and $\mu$ is a Borel probability measure on $\XF$.
\end{definition}

Let $(\XF,d,\mu)$ be a metric Polish space. Given an $n$-tuple $X_{[n]}:=(X_1,\dots,X_n)$ of i.i.d.\ random variables with distribution $\mu$ and a radius $r>0$, the closed balls $\{B(X_i,r)\}_{i=1}^n$ of radius $r$ with random centres $\{X_i\}_{i=1}^n$ define a random multifunction $F_{r,X_{[n]}}:\XF \rightarrow [n]$ according to the formula
\begin{equation}\label{eq:Fn}
F_{r, X_{[n]}}(x):=\{i\in[n]:x\in B(X_i,r)\}.
\end{equation}
A disintegration of $\mu$ along $F_{r,X_{[n]}}$ with respect to  uniform probability $\nu_{[n]}$ on $[n]$ is then a family $\{\mu_i\}_{i=1}^n$ of probability measures such that $\text{supp}(\mu_i)\subseteq B(X_i,r)$ for all $i$ and for which $\mu = \frac{1}{n}\sum_{i=1}^n\mu_i$. The necessary and sufficient conditions of Theorem \ref{thm:existence} in this setting reduce simply to
\begin{equation}\label{nasc}
\mu\bigg(\bigcup_{i\in I}B(X_i,r)\bigg)\geq \frac{|I|}{n}\text{\ for all $I\in 2^{[n]}$}.
\end{equation}
Clearly \eqref{nasc} holds with probability 1 whenever $r$ is equal to the diameter of $(\XF,d)$. Ideally, \eqref{nasc} would also hold with high probability for smaller values of $r$. We pin down this notion in the following two definitions. Throughout the sequel we will use $\NB$ to denote the strictly positive integers and $\RB_{\geq0}$ to denote the non-negative real numbers.

\begin{definition}\label{def:disint1}
    Given $n\in\NB$, $\epsilon > 0$, and $r > 0$, we declare a metric Polish probability space $(\XF,d,\mu)$ to be $(n,\epsilon,r)$\emph{-disintegrable} if, whenever $X_1,\dots,X_n$ are i.i.d.\ with distribution $\mu$ on a probability space $(\Psi,\FC,\PB)$, there exists a disintegration of $\mu$ along $F_{r, X_{[n]}}$ with respect to the uniform measure on $[n]$ with $\PB$-probability at least $1-\epsilon$.
\end{definition}

\begin{remark} \label{R:monotonicity}
    If a metric Polish probability space is $(n, \epsilon, r)$-disintegrable, $\epsilon' \geq \epsilon$ and $r' \geq r$, then [recall~\eqref{nasc}] the space is $(n, \epsilon', r')$-disintegrable.
\end{remark}

The following stronger notion of disintegrability has $(\epsilon,r)$ scaling to zero as $n\rightarrow\infty$.

\begin{definition}\label{def:disint2}
    Given functions $\epsilon,r:\NB\rightarrow\RB_{\geq0}$ tending to zero as $n\rightarrow\infty$, we declare a metric Polish probability space $(\XF,d,\mu)$ to be \emph{asymptotically disintegrable with rate $(\epsilon,r)$} if for all $n$ sufficiently large, $(\XF,d,\mu)$ is $(n,\epsilon(n),r(n))$-disintegrable.
\end{definition}

It might be thought that any reasonably nice space is necessarily asymptotically disintegrable. This is not the case, as the following counterexample shows.

\begin{proposition}\label{prop:counterexample}
    For fixed irrational $q \in (0, 1)$, consider the metric Polish probability space 
    \[
    \XF := [-1, - (1 - q)] \cup [0, 1 - q],
    \]
    equipped with its Euclidean metric $d$ and uniform probability $\mu$. Then $(\XF,d,\mu)$ is not $(n,\epsilon,r)$-disintegrable for any $n \in\NB$, $r < 1 - q$, and $\epsilon < 1$. In particular, $(\XF,d,\mu)$ is not asympotically distintegrable for any rate.
\end{proposition}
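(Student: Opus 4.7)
The plan is to prove something stronger than the statement: I would show that condition~\eqref{nasc} of Theorem~\ref{thm:existence} fails for \emph{every} realization of $(X_1,\dots,X_n) \in \XF^n$ (an event of probability~$1$) as soon as $r<1-q$. By Theorem~\ref{thm:existence}, this rules out the existence of any disintegration along $F_{r,X_{[n]}}$ with probability~$1$, and in particular prevents $(\XF,d,\mu)$ from being $(n,\epsilon,r)$-disintegrable for any $\epsilon<1$.

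The key geometric input is that the two components of $\XF$ are separated by distance exactly $1-q$, namely the distance from $-(1-q)$ to~$0$. Consequently, whenever $r<1-q$ and $X_i$ lies in one component, $B(X_i,r)\cap\XF$ is entirely contained in that same component. Partitioning $[n]$ into $I := \{i:X_i\in[0,1-q]\}$ and $J := [n]\setminus I$, this observation yields
\[
\mu\bigg(\bigcup_{i\in I}B(X_i,r)\bigg)\leq\mu([0,1-q])=1-q,\qquad \mu\bigg(\bigcup_{j\in J}B(X_j,r)\bigg)\leq\mu([-1,-(1-q)])=q.
\]
If \eqref{nasc} were to hold for the realization at hand, testing it on the subsets $I$ and $J$ would force $|I|/n\leq 1-q$ and $|J|/n\leq q$; summing these and using $|I|+|J|=n$ would push both to equality, i.e.\ $|I|=n(1-q)$ and $|J|=nq$. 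Since $q$ is irrational and $n\in\NB$, the right-hand sides are irrational while the left-hand sides are integers, a contradiction.

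There is no substantive obstacle here. One small remark worth making in the write-up: testing~\eqref{nasc} on the full index set $[n]$ gives only the trivial $1\geq 1$, so the whole argument relies on splitting the sample by component. Irrationality of~$q$ is precisely what excludes the borderline case $|I|/n=1-q$, which would otherwise be compatible with~\eqref{nasc}; hence the hypothesis ``$q$ irrational'' cannot be dropped without essentially changing the conclusion.
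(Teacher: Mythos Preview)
Your proof is correct and follows essentially the same approach as the paper's: both split the sample indices according to which component the points fall in, observe that balls of radius $r<1-q$ cannot cross the gap, apply~\eqref{nasc} to the two parts of the partition, and reach a contradiction with the irrationality of~$q$. Your framing---that~\eqref{nasc} fails for \emph{every} realization, hence disintegration is impossible with probability~$1$---is slightly more explicit than the paper's contradiction argument, but the content is the same.
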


\begin{proof}
        Fix $n\in\NB$, $r<1-q$, and $\epsilon<1$, and suppose for a contradiction that $(\XF,d,\mu)$ is $(n,\epsilon,r)$-disintegrable. Denote $\XF_{-} := [-1, - (1 - q)]$ and $\XF_{+} := [0, 1 - q]$.  Observe that if $x \in \XF_{-}$, then $B(x, r) \subseteq \XF_{-}$; similarly, if $x \in \XF_{+}$, then $B(x, r) \subseteq \XF_{+}$.
    Let $X_1,X_2,\dots,X_n$ be i.i.d.\ with distribution $\mu$ on a probability space $(\Psi,\FC,\PB)$. Define $J_n$ to be the random subset $\{i \in [n]:X_i \in \XF_{-}\}$ of $[n]$.  By hypothesis, there is an event of $\PB$-probability at least $1-\epsilon>0$ on which the inequality in~\eqref{nasc} holds both for $I = J_n$ and for $I = [n] \setminus J_n$; but then we must have
    \[
    \mu\bigg(\bigcup_{i \in J_n} B(X_i,r) \bigg) = \frac{|J_n|}{n} = \mu(\XF_{-}) = q,
    \]
    contradicting the irrationality of~$q$.
\end{proof}

In Section \ref{sec:examples}, we will provide a number of \emph{positive} examples of asymptotically disintegrable metric Polish probability spaces. It will become apparent that \emph{proving} the asymptotic disintegrability of a given metric Polish probability space is a non-trivial matter. To motivate the work required to establish this property, therefore, let us first describe some applications of this notion. The first application (Section \ref{sec:Wdb}) is to bounding the Wasserstein distance between a measure and an empirical measure constructed therefrom by i.i.d.\ samples; the second (Section \ref{sec:Acab}) is to the provision of ``average-case bounds" on the approximation of integrals of Lipschitz functions by empirical averages.

\subsection{Wasserstein distance bounds}\label{sec:Wdb}

If $(\XF,d)$ is a metric Polish space, $p\in[1,\infty)$, and $\mu$ and $\nu$ are Borel probability measures on $\XF$, denote by $\Lambda(\mu,\nu)$ the set of all Borel probability measures on $\XF\times\XF$ with marginals $\mu$ and $\nu$. Recall that the \emph{Wasserstein $p$-distance} $W_p(\mu, \nu)$ between such $\mu$ and $\nu$ is then defined by
\[
W_p(\mu,\nu):=\inf_{\lambda\in\Lambda(\mu,\nu)}\bigg(\int\!d(x,x')^p\,\lambda(dx,dx')\bigg)^{1 / p}.
\]
Given $n$ i.i.d.\ random variables $(X_1,\dots,X_n)$ on a probability space $(\Psi,\FC,\PB)$ with distribution $\mu$, use $\hat{\mu}_{n}$ to denote the random empirical measure $\frac{1}{n}\sum_{i=1}^n\delta_{X_i}$.

\begin{proposition}\label{prop:wasserstein}
    Suppose that a metric Polish probability space $(\XF,d,\mu)$ is asymptotically disintegrable with rate $(\epsilon,r)$ and that $X_1,X_2,\dots$ are i.i.d.\ with distribution $\mu$ on a probability space $(\Psi,\FC,\PB)$. Then, for all $n$ sufficiently large, one has for each fixed $p \in [1, \infty)$ that
    \[
    W_p(\mu,\hat{\mu}_n)\leq r(n)
    \]
    with $\PB$-probability at least $1-\epsilon(n)$.
\end{proposition}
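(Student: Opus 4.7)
The plan is to build an explicit coupling between $\mu$ and $\hat{\mu}_n$ using the disintegration provided by asymptotic disintegrability, and then bound its $p$-cost pointwise by $r(n)^p$.

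First, I would fix $n$ large enough that $(\XF,d,\mu)$ is $(n,\epsilon(n),r(n))$-disintegrable, and work on the event $E_n$ of $\PB$-probability at least $1-\epsilon(n)$ on which there exists a disintegration $\{\mu_i\}_{i=1}^n$ of $\mu$ along the random multifunction $F_{r(n),X_{[n]}}$ with respect to the uniform measure $\nu_{[n]}$ on $[n]$. On $E_n$, by Definition \ref{def:disintegration} and the specialisation just before Definition~\ref{def:disint1}, each $\mu_i$ is a Borel probability measure on $\XF$ supported on $B(X_i,r(n))$, and
\[
\mu \;=\; \frac{1}{n}\sum_{i=1}^n\mu_i.
\]

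Next, I would propose the coupling
\[
\lambda \;:=\; \frac{1}{n}\sum_{i=1}^n \mu_i \otimes \delta_{X_i}
\]
as a Borel probability measure on $\XF\times\XF$. Its first marginal is $\frac{1}{n}\sum_{i=1}^n\mu_i = \mu$ by the disintegration formula, and its second marginal is $\frac{1}{n}\sum_{i=1}^n\delta_{X_i} = \hat{\mu}_n$; hence $\lambda\in\Lambda(\mu,\hat{\mu}_n)$.

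Finally, to bound the Wasserstein cost, I would use the fact that $\mu_i$ is supported on $B(X_i,r(n))$ to get $d(x,X_i)\leq r(n)$ for $\mu_i$-a.e.\ $x$. Then
\[
W_p(\mu,\hat{\mu}_n)^p \;\leq\; \int\!d(x,x')^p\,\lambda(dx,dx') \;=\; \frac{1}{n}\sum_{i=1}^n\int\!d(x,X_i)^p\,\mu_i(dx) \;\leq\; r(n)^p,
\]
and taking $p$-th roots gives $W_p(\mu,\hat{\mu}_n) \leq r(n)$ on $E_n$, as required. The argument is essentially a direct transcription of the disintegration into a coupling; the only mildly subtle point, which I would flag but not belabour, is measurability of $\lambda$ as a random measure on $\XF\times\XF$, which follows from the measurability clause (i) in Definition~\ref{def:disintegration} together with measurability of the map $(X_1,\dots,X_n)\mapsto\frac{1}{n}\sum_i\mu_i\otimes\delta_{X_i}$. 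There is no real obstacle here: once the disintegration is in hand, the Wasserstein bound is immediate from the definition.
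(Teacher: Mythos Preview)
Your proof is correct and follows essentially the same approach as the paper: both construct the coupling $\lambda = \frac{1}{n}\sum_{i=1}^n \mu_i \otimes \delta_{X_i}$ from the disintegration and bound its $p$-cost pointwise by $r(n)^p$ using that $\mu_i$ is supported on $B(X_i,r(n))$. The only difference is cosmetic (you raise to the $p$th power before taking roots, while the paper carries the $1/p$ exponent throughout), and your aside on measurability is a harmless addition the paper omits.
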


\begin{proof}
    Since $(\XF,d,\mu)$ is asymptotically disintegrable with rate $(\epsilon,r)$, for all $n\in\NB$ sufficiently large there is an event of $\PB$-probability at least $1-\epsilon(n)$ such that $\mu$ admits a disintegration $\{\mu_i\}_{i=1}^n$ along the multifunction $F_{r(n),X_{[n]}}:\XF\rightarrow [n]$ defined by the balls $B(X_i,r(n))$. Conditioning on this event, the disintegration formula $\mu = \frac{1}{n}\sum_{i=1}^n\mu_i$ implies that $\lambda_n:=\frac{1}{n}\sum_{i=1}^n (\mu_i\times\delta_{X_i})$ is a coupling between $\mu$ and $\hat{\mu}_n$; thus
    \begin{align*}
    W_p(\mu,\hat{\mu}_n)
    &\leq \bigg(\int\!d(x,x')^p\,\lambda_n(dx,dx')\bigg)^{\frac{1}{p}} 
    = \bigg(\frac{1}{n}\sum_{i=1}^n\int\!d(x,X_i)^p\,\mu_i(dx)\bigg)^{\frac{1}{p}}\\
    &\leq \bigg(\frac{1}{n}\sum_{i=1}^n\int\!r(n)^p\,\mu_i(dx)\bigg)^{\frac{1}{p}} 
    = r(n),
    \end{align*}
    where the second inequality follows from the fact that $\mu_i$ is supported on $B(X_i,r(n))$ for all $i$, and the final identity from the fact that each $\mu_i$ has mass $1$.
\end{proof}

Note that, unlike existing high-probability bounds on $W_p(\mu, \hat{\mu}_n)$, the bound $r(n)$ in 
Proposition \ref{prop:wasserstein} does not depend on $p \in [1, \infty)$.
Combining Proposition \ref{prop:wasserstein} with Theorem \ref{T:path main}, \ref{T:circle}, \ref{T:graph main},  or \ref{T:dd main} or Corollary~\ref{C:non d main} thus gives a new proof of a high-probability quantitative law of large numbers in Wasserstein space for certain spaces $(\XF, d, \mu)$. For the $1$-dimensional examples we consider, our rates at least match the best known rates, while for the higher-dimensional examples we consider our rates are never worse than by a factor of 2 in the exponent. For sufficiently large $p$, our bounds are always better than the best known bounds due to the uniformity of our bounds in $p$.  We will discuss this more precisely for the examples of Theorem~\ref{T:path main} and Corollary~\ref{C:non d main} in Section~\ref{sec:wdre}.

\subsection{Average-case approximation bounds}\label{sec:Acab}

Our second application is the primary motivation for this work:\ the provision of \emph{average-case} bounds on the error in approximating the expectations of Lipschitz functions by empirical averages thereof. Given a metric space $(\XF,d)$, Lipschitz function $f:\XF\rightarrow\RB$, and $x\in\XF$, we denote by $\|f\|_{\text{Lip},B(x,r)}$ the local Lipschitz constant of $f$ over the closed ball $B(x,r)$.

\begin{definition}\label{def:averagecase}
    Let $(\XF,d,\mu)$ be a metric Polish probability space. Given $n\in\NB$, $\epsilon > 0$ and $r>0$, we say that $(\XF,d,\mu)$ is \emph{$(n,\epsilon,r)$-average-case approximable} if, whenever $X_1,\dots,X_n$ are i.i.d.\ with distribution $\mu$ on a probability space $(\Psi,\FC,\PB)$, there is an event of $\PB$-probability at least $1-\epsilon$ on which one has
    \[
    \bigg|\int\!f(x)\,\mu(dx) - \frac{1}{n}\sum_{i=1}^nf(X_i)\bigg|\leq r\cdot\frac{1}{n}\sum_{i=1}^n\|f\|_{\text{Lip},B(X_i,r)}
    \]
    for all Lipschitz functions $f:\XF\rightarrow\RB$. Given $\epsilon,r:\NB\rightarrow\RB_{\geq0}$ which vanish as $n\rightarrow\infty$, we say that $(\XF,d,\mu)$ is \emph{asymptotically average-case approximable with rate $(\epsilon,r)$} if, for all $n$ sufficiently large, $(\XF,d,\mu)$ is $(n,\epsilon(n),r(n))$-average-case approximable.
\end{definition}

``Average-case" here is used to refer to the fact that the bound depends only on the \emph{average} of the \emph{local} Lipschitz constants of $f$. We contrast Definition \ref{def:averagecase} with the \emph{worst-case} approximation bound that is easily derivable from Kantorovich--Rubinstein duality, namely
\[
\bigg|\int\!f(x)\,\mu(dx)-\frac{1}{n}\sum_{i=1}^nf(X_i)\bigg|\leq \|f\|_{\text{Lip}} W_1(\mu,\hat{\mu}_n)\text{ for all Lipschitz $f:\XF\rightarrow\RB$,}
\]
depending on the \emph{global} Lipschitz constant $\|f\|_{\text{Lip}}$ of $f$. Our next proposition demonstrates that asymptotic disintegrability is sufficient for asymptotic average-case approximability.

\begin{proposition}\label{prop:averagecase}
    Let $(\XF,d,\mu)$ is a metric Polish probability space which is asymptotically disintegrable with rate $(\epsilon,r)$. Then $(\XF,d,\mu)$ is asymptotically average-case approximable with rate $(\epsilon,r)$.
\end{proposition}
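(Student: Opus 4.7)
The plan is to reproduce, in the presence of a disintegration, the chain of estimates sketched in \eqref{eq:averagecase} of the introduction, and to observe that the event on which a disintegration exists depends only on $X_1,\dots,X_n$ and not on the particular Lipschitz function~$f$. Concretely, fix $n$ large enough that $(\XF,d,\mu)$ is $(n,\epsilon(n),r(n))$-disintegrable, and let $X_1,\dots,X_n$ be i.i.d.\ from~$\mu$ on $(\Psi,\FC,\PB)$. By Definition~\ref{def:disint1}, there is an event $E_n\in\FC$ with $\PB(E_n)\geq 1-\epsilon(n)$ on which a family $\{\mu_i\}_{i=1}^{n}$ of Borel probability measures exists with $\mu_i$ supported on $B(X_i,r(n))$ and $\mu=\frac{1}{n}\sum_{i=1}^{n}\mu_i$.

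On the event $E_n$, I would then write, for any Lipschitz $f:\XF\rightarrow\RB$,
\[
\int\!f(x)\,\mu(dx)-\frac{1}{n}\sum_{i=1}^{n}f(X_i)=\frac{1}{n}\sum_{i=1}^{n}\int\![f(x)-f(X_i)]\,\mu_i(dx),
\]
using the disintegration formula in the first term and the fact that $\mu_i$ has total mass~$1$ in the second. The triangle inequality, together with the bound $|f(x)-f(X_i)|\leq\|f\|_{\text{Lip},B(X_i,r(n))}\,d(x,X_i)$ valid for $x\in B(X_i,r(n))$ (which is where $\mu_i$ is supported), and the further bound $d(x,X_i)\leq r(n)$ on that same support, would then yield
\[
\bigg|\int\!f(x)\,\mu(dx)-\frac{1}{n}\sum_{i=1}^{n}f(X_i)\bigg|\leq r(n)\cdot\frac{1}{n}\sum_{i=1}^{n}\|f\|_{\text{Lip},B(X_i,r(n))}.
\]

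The crucial conceptual point, and the reason asymptotic disintegrability really does imply asymptotic average-case approximability (rather than something weaker like a pointwise-in-$f$ statement), is that the event $E_n$ is constructed purely from the random centres $X_1,\dots,X_n$ and radius $r(n)$ via the condition \eqref{nasc} of Theorem~\ref{thm:existence}; it makes no reference to~$f$. Consequently, on the single event $E_n$ of $\PB$-probability at least $1-\epsilon(n)$, the above inequality holds simultaneously for every Lipschitz $f:\XF\rightarrow\RB$, which is exactly the requirement of Definition~\ref{def:averagecase}. I do not anticipate any genuine obstacle: the argument is essentially a clean formalisation of the heuristic already given in the introduction, with the only care needed being to keep the quantifiers straight (disintegration chosen first, Lipschitz function arbitrary afterwards) so that the resulting average-case bound is uniform over~$f$.
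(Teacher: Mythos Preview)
Your proposal is correct and follows essentially the same approach as the paper: obtain the disintegration $\{\mu_i\}$ on a high-probability event, rewrite the difference as $\frac{1}{n}\sum_i\int(f(x)-f(X_i))\,\mu_i(dx)$ via the disintegration formula, then bound each summand by $r(n)\|f\|_{\text{Lip},B(X_i,r(n))}$ using the support condition. Your explicit remark that the event $E_n$ depends only on the sample and not on~$f$ (so the bound is uniform over Lipschitz~$f$) is a welcome clarification that the paper leaves implicit.
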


\begin{proof}
    Let $X_1,X_2,\dots$ be i.i.d.\ with distribution $\mu$ on a probability space $(\Psi,\FC,\PB)$. Since $(\XF,d,\mu)$ is asymptotically $(\epsilon,r)$-disintegrable, for all $n$ sufficiently large there is an event of $\PB$-probability at least $1-\epsilon(n)$ on which $\mu$ admits a disintegration $\{\mu_i\}_{i=1}^n$ along the multifunction $F_{r(n),X_{[n]}}:\XF\rightarrow [n]$ defined by the balls $B(X_i,r(n))$ [recall \eqref{eq:Fn}]. Conditioning on this event, one has
    \begin{align*}
        \bigg|\int\!f(x)\,\mu(dx)-\frac{1}{n}\sum_{i=1}^nf(X_i)\bigg|& = \bigg|\frac{1}{n}\sum_{i=1}^n\int (f(x)-f(X_i))\,\mu_i(dx)\bigg|\\
        &\leq \frac{1}{n}\sum_{i=1}^n\int\!|f(x)-f(X_i)|\,\mu_i(dx)\\
        &\leq r(n)\cdot\frac{1}{n}\sum_{i=1}^n\|f\|_{\text{Lip},B(X_i,r(n))},
    \end{align*}
    where the equality follows from the disintegration formula and the second inequality from the fact that $\mu_i$ is a probability measure supported on $B(X_i,r(n))$.
\end{proof}

A natural question is whether asymptotic disintegrability is \emph{necessary} for asymptotic average-case approximability. We cannot yet prove that this is the case, but we \emph{can} prove that the counterexample to asymptotic disintegrability in Proposition \ref{prop:counterexample} is also a counterexample to asymptotic average-case approximability, and for essentially the same reason. 
The formal similarity of these two propositions and their proofs motivates our conjecture that asymptotic disintegrability is indeed necessary for asymptotic average-case approximability.

\begin{proposition}\label{prop:averagecasecounterexample}
    Consider the example $(\XF, d, \mu)$ in Proposition \ref{prop:counterexample}. Then $(\XF,d,\mu)$ is not $(n,\epsilon,r)$-average case approximable for any $n\in\NB$, $r < 1-q$ and $\epsilon<1$. In particular, $(\XF,d,\mu)$ is not asymptotically average-case approximable for any rate.
\end{proposition}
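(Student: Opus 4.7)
The plan is to mirror the proof of Proposition~\ref{prop:counterexample}, but to replace the disintegration argument with a single Lipschitz test function that witnesses failure of the average-case approximation inequality with probability~$1$. With $\XF_- := [-1, -(1-q)]$ and $\XF_+ := [0, 1-q]$ as in that earlier proof, the natural candidate is the indicator $f := \mathbf{1}_{\XF_-}$, and I would first verify that $f$ is Lipschitz on $(\XF,d)$: since the two components $\XF_\pm$ are separated by a gap of length $1-q$, any two points $x, y \in \XF$ with $f(x) \neq f(y)$ satisfy $d(x,y) \geq 1-q$, so $\|f\|_{\text{Lip}} \leq 1/(1-q) < \infty$.

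Next, I would plug $f$ into the defining inequality of $(n,\epsilon,r)$-average-case approximability and evaluate both sides. The left side is exactly $|q - |J_n|/n|$, where $J_n := \{i \in [n] : X_i \in \XF_-\}$, since $\int\!f\,d\mu = \mu(\XF_-) = q$. For the right side, I would invoke the key geometric observation already used in Proposition~\ref{prop:counterexample}: because $r < 1-q$ is strictly less than the gap between $\XF_-$ and $\XF_+$, every ball $B(X_i,r)$ lies entirely within whichever of $\XF_\pm$ contains $X_i$. Hence $f$ is constant on each such ball, so $\|f\|_{\text{Lip}, B(X_i,r)} = 0$ for every $i$, and the right side is identically zero.

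Combining these, the inequality at this particular $f$ reduces to $|J_n|/n = q$, which fails on all of $\Psi$ because $|J_n|/n$ is always rational while $q$ is irrational. The event on which the inequality holds for this specific $f$ therefore has $\PB$-probability $0$, and a fortiori the event on which it holds for \emph{all} Lipschitz $f$ has probability $0 < 1 - \epsilon$, contradicting $(n,\epsilon,r)$-average-case approximability. The only technical point is the Lipschitz verification for $\mathbf{1}_{\XF_-}$, which is immediate from the gap between the components, so I do not anticipate any substantive obstacle; the formal similarity between this argument and that of Proposition~\ref{prop:counterexample} is precisely what motivates the conjecture preceding the statement.
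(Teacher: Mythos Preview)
Your proposal is correct and follows essentially the same approach as the paper: the paper uses the test function $f_n := n\,\mathbf{1}_{\XF_-}$ rather than $\mathbf{1}_{\XF_-}$, but since the right-hand side vanishes in either case this scaling is immaterial, and both arguments reduce the approximation inequality to the impossible equation $|J_n| = qn$. Your explicit verification that $\mathbf{1}_{\XF_-}$ is Lipschitz via the gap $1-q$ is a detail the paper leaves implicit.
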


\begin{proof}
    The proof idea is much the same as that of Proposition \ref{prop:counterexample}. Fixing $n\in\NB$, $r<1-q$, $\epsilon <1$ and supposing that $(\XF,d,\mu)$ were $(n,\epsilon,r)$-average case approximable, consider i.i.d.\  random variables $X_1,\dots,X_n$ with distribution $\mu$ on a probability space $(\Psi,\FC,\PB)$. Consider the Lipschitz function $f_n:=n {\bf 1}_{\XF_{-}}:\XF\rightarrow\RB$, i.e.,\ $n$ times the indicator function of $\XF_{-}$; the local Lipschitz constant of $f_n$ on each of the balls $B(X_i,r)$ necessarily vanishes. Then, by hypothesis, with $J_n := \{i \in [n]:X_i \in \XF_{-}\}$, there is an event of $\PB$-probability at least $1-\epsilon>0$ on which
    \[
    qn = \int\!f_n(x)\,\mu(dx)=\frac{1}{n}\sum_{i=1}^nf_n(X_i) = |J_n|,
    \]
    contradicting the irrationality of $q$.
\end{proof}

In Section \ref{sec:examples}, we demonstrate that a number of connected spaces are  asymptotically disintegrable. 

\section{Examples}\label{sec:examples}

In this section, we prove that a number of simple spaces are asymptotically disintegrable and give associated rates.

\subsection{The interval and the circle}\label{sec:interval}

In this subsection, we prove that the interval (Theorem~\ref{T:path main}) and the circle (Theorem~\ref{T:circle}) are asymptotically disintegrable with rate $\big(O(n^{-\alpha}),O\big(\sqrt{\ln n/n})\big)$ for any~$\alpha$, but we begin with some general considerations.

\begin{definition}
Let $(\Omega, \AC, \mu)$ be a probability space with a given cover by measurable sets $B_1, \ldots, B_n$, and let $I \in 2^{[n]}$.  We will say that~$I$ is \emph{reducible} if there exists a nonempty proper subset~$J$ of~$I$ such that
\begin{equation}
\label{reducible_interval}
\mu\!\left( \left( \bigcup_{i \in J} B_i \right) \cap \left( \bigcup_{i \in I \setminus J} B_i \right) \right) = 0.
\end{equation}
We will say that~$I$ is \emph{irreducible} if it is not reducible.
\end{definition}

The following proposition allows us to reduce the work in checking~\eqref{nasc}.

\begin{proposition}
\label{P:irreducible_interval}
Given a probability space $(\Omega, \AC, \mu)$ with a cover by measurable sets $B_1, \ldots, B_n$ and a probability measure~$\nu$ on $[n]$, \eqref{nasc} holds if and only if the inequalities therein are satisfied for every irreducible $I \in 2^{[n]}$.
\end{proposition}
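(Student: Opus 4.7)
The ``only if'' direction is immediate, since every irreducible $I$ is a fortiori a member of $2^{[n]}$. For the ``if'' direction, the plan is to proceed by strong induction on $|I|$. The cases $|I| \leq 1$ are trivial: when $I = \varnothing$, both sides of the inequality in \eqref{nasc} vanish, and when $|I| = 1$ the set~$I$ admits no nonempty proper subset and is therefore vacuously irreducible, so the required inequality holds by hypothesis.

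For the inductive step, I would fix $I$ with $|I| \geq 2$ and assume the inequality for all subsets of $[n]$ of strictly smaller cardinality. If $I$ is irreducible, the inequality holds by hypothesis. Otherwise, let $J$ be a witness to the reducibility of~$I$ as in \eqref{reducible_interval}, and set $K := I \setminus J$; then~$J$ and~$K$ are both nonempty proper subsets of~$I$, so each has cardinality strictly less than $|I|$. Since \eqref{reducible_interval} asserts that the pairwise intersection of $\bigcup_{i \in J} B_i$ and $\bigcup_{i \in K} B_i$ has $\mu$-measure zero, inclusion--exclusion gives
\[
\mu\!\left(\bigcup_{i \in I} B_i\right) = \mu\!\left(\bigcup_{i \in J} B_i\right) + \mu\!\left(\bigcup_{i \in K} B_i\right).
\]
The inductive hypothesis applied to~$J$ and to~$K$, together with finite additivity of the probability measure $\nu$ on $[n]$, then yields $\mu(\bigcup_{i \in I} B_i) \geq \nu(J) + \nu(K) = \nu(I)$, completing the induction.

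The argument is essentially bookkeeping; the only conceptual ingredient is the observation that \eqref{reducible_interval} converts the union over~$I$ into an effectively disjoint union, so that measures add. I therefore anticipate no substantive obstacle in executing the plan.
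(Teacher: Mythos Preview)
Your proposal is correct and follows essentially the same approach as the paper: strong induction on $|I|$, with the reducible case handled by splitting $I$ into $J$ and $I\setminus J$ via~\eqref{reducible_interval}, using inclusion--exclusion to add the measures, and invoking the induction hypothesis on each piece. The only differences are cosmetic (you name $K := I\setminus J$ explicitly and phrase the $|I|=1$ case as ``vacuously irreducible'').
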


\begin{proof}
This is easy.  The ``only if'' assertion is trivial.  To prove the ``if'' assertion, we use induction on $|I|$, with the cases $|I| = 0$ and $|I| = 1$ being trivial.  We proceed to the induction step with $|I| \geq 2$.  If~$I$ is irreducible, then
\[
\mu\!\left( \bigcup_{i \in I} B_i \right) \geq \sum_{i \in I} \nu_i 
\]
holds by assumption.  So suppose that~$I$ is reducible.  Let~$J$ be a nonempty proper subset of~$I$ satisfying~\eqref{reducible_interval}.
Then
\begin{align*}
\mu\!\left( \bigcup_{i \in I} B_i \right) 
&= \mu\!\left( \bigcup_{i \in J} B_i \right) + \mu\!\left( \bigcup_{i \in I \setminus J} B_i \right) 
- \mu\!\left( \left( \bigcup_{i \in J} B_i \right) \cap \left( \bigcup_{i \in I \setminus J} B_i \right) \right) \\ 
&= \mu\!\left( \bigcup_{i \in J} B_i \right) + \mu\!\left( \bigcup_{i \in I \setminus J} B_i \right) \\
&\geq \sum_{i \in J} \nu_i + \sum_{i \in I \setminus J} \nu_i \\
&= \sum_{i \in I} \nu_i.
\end{align*}
where the second equality follows from~\eqref{reducible_interval} and the inequality follows by the induction hypothesis.
\end{proof}

Consider now the metric Polish probability space $(\XC,d,\mu)$, with $\XC = [0,1]$, $d$ its Euclidean metric, and $\mu$ any Borel probability measure (later, we will need to insist that $\mu$ satisfy some additional conditions). Suppose that~$n$ balls $B(x_i, r)$, with $x_i \in \XC$ and (necessarily) $r > 0$, cover~$\XC$. Write $y_1, \ldots, y_n$ for $x_1, \ldots, x_n$ rearranged into nondecreasing order, and let $B_i = B(y_i, r)$.

\begin{proposition}
\label{P:I to L_interval}
Let~$\mu$ be arbitrarily specified.  If 
a set $I \in 2^{[n]}$ is irreducible, then there exists $L \supseteq I$ such that~$L$ is connected in the usual path-graph (call it 
$\PC_n$) on $[n]$ and
\begin{equation}
\label{I to L_interval}
\cup_{i \in I} B_i = \cup_{i \in L} B_i.
\end{equation}
\end{proposition}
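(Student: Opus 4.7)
The plan is to take~$L$ to be the smallest discrete interval in~$[n]$ containing~$I$, namely $L := \{\min I, \min I + 1, \ldots, \max I\}$ (interpreted as~$\varnothing$ when $I = \varnothing$).  Trivially~$L$ is connected in~$\PC_n$ and contains~$I$, so the real issue is showing that $\bigcup_{i \in L} B_i$ does not strictly enlarge $\bigcup_{i \in I} B_i$.  The cases $|I| \leq 1$ are immediate, so from here on I write $a := \min I$, $b := \max I$, and assume $|I| \geq 2$.

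The crux will be to show that $E := \bigcup_{i \in I} B_i$ is a \emph{connected} subset of $\XC \subseteq \RB$, and I would argue by contradiction.  If~$E$ were disconnected, one could write $E = U \sqcup V$ with both~$U$ and~$V$ nonempty and relatively open in~$E$.  Each~$B_i$ is an interval in~$\RB$, hence connected, so each~$B_i$ lies wholly within~$U$ or wholly within~$V$; partitioning~$I$ accordingly yields a nonempty proper subset~$J$ of~$I$ with $\bigcup_{i \in J} B_i \subseteq U$ and $\bigcup_{i \in I \setminus J} B_i \subseteq V$.  Since $U \cap V = \varnothing$, the intersection $\left( \bigcup_{i \in J} B_i \right) \cap \left( \bigcup_{i \in I \setminus J} B_i \right)$ would be empty, hence of $\mu$-measure zero, contradicting irreducibility of~$I$.

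Once connectedness of~$E$ is in hand, a short geometric step finishes the proof.  Every $B_i$ with $i \in I$ has centre $y_i \in [y_a, y_b]$, so $B_i \subseteq [y_a - r, y_b + r] \cap \XC$; combined with $B_a \cup B_b \subseteq E$, connectedness forces~$E$ to coincide with the interval $[\max(0, y_a - r), \min(1, y_b + r)]$.  For any $j \in L$ the ordering $y_a \leq y_j \leq y_b$ then gives $B_j = [y_j - r, y_j + r] \cap \XC \subseteq E$, so $\bigcup_{i \in L} B_i \subseteq E = \bigcup_{i \in I} B_i$, while the reverse inclusion is immediate from $I \subseteq L$.

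The main obstacle is the connectedness claim for~$E$:\ that is the single step where irreducibility is used in an essential way, and it relies crucially on the fact that each~$B_i$ is itself connected.  The balance of the proof is geometric bookkeeping exploiting that balls in~$\RB$ are intervals and that the indexing $1, \ldots, n$ respects the linear order of the centres.
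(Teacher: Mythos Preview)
Your argument is correct, and it differs in strategy from the paper's.  The paper proceeds \emph{iteratively}: whenever~$I$ is not $\PC_n$-connected, it locates indices $i_1 < i_2$ in~$I$ with $i_1 + 1 \notin I$ and $i_2 - 1 \notin I$, uses irreducibility (via an explicit comparison of centre-spacings with~$r$) to show that at least one of the adjacent balls $B_{i_1 + 1}$ or $B_{i_2 - 1}$ is already contained in $\cup_{i \in I} B_i$, annexes that index, and repeats until the enlarged index set is connected.  You instead fix $L = \{\min I, \ldots, \max I\}$ from the outset and prove in one stroke that $E = \cup_{i \in I} B_i$ is topologically connected---a disconnection of~$E$ would, by connectedness of each~$B_i$, split~$I$ into two nonempty pieces whose corresponding unions are disjoint, hence of $\mu$-measure zero, contradicting irreducibility.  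Your route is shorter and makes the role of irreducibility more transparent (it is invoked exactly once, cleanly); the paper's route is more hands-on and exhibits constructively which missing index can be annexed at each stage.  Either way, the resulting~$L$ is the discrete interval spanned by~$I$.
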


\begin{proof}
If~$I$ is connected in $\PC_n$, we can choose $L = I$. 
Otherwise, there exist $i_1$ and $i_2$ in~$I$ with $i_2 \geq i_1 + 2$ such that $i_1 + 1 \notin I$ and $i_2 - 1 \notin I$.
We will prove by contradiction that either $B_{i_1 + 1} \subseteq \cup_{i \in I} B_i$ or $B_{i_2 - 1} \subseteq \cup_{i \in I} B_i$.
Indeed, if $B_{i_1 + 1} \not\subseteq \cup_{i \in I} B_i$, then $y_{i_1 + 1} - y_{i_1} > r$.  If also 
$B_{i_2 - 1} \not\subseteq \cup_{i \in I} B_i$, then (by the same reasoning) $y_{i_2} - y_{i_2 - 1} > r$.

Now let~$J$ be the interval $[i_1 + 1, i_2]$ in $\PC_n$, and note that $J$ is a nonempty proper subset of $[n]$.  We then claim that~\eqref{reducible_interval} holds, contradicting our assumption that~$I$ is irreducible.  Indeed, 
\[
\left( \bigcup_{i \in J} B_i \right) \cap \left( \bigcup_{i \in I \setminus J} B_i \right) = \varnothing,
\]
because (by construction) the distance from $y_{i_1}$ to $y_{i_2}$ (strictly) exceeds $2 r$. 

So we may now suppose that there exists $i_0 \in I$ with a neighbour $k \in \PC_n$ such that $k \notin I$ and 
$B_k \subseteq \cup_{i \in I} B_i$.  If we enlarge $I$ to get~$\widetilde{I}$ by annexing the element~$k$, then
\[
\cup_{i \in I} B_i = \cup_{i \in \widetilde{I}} B_i.
\]  
If~$\widetilde{I}$ is connected in~$\PC_n$, then we can choose $L = \widetilde{I}$.  If not, we continue annexing elements until $L \supseteq I$ is connected in~$\PC_n$ and~\eqref{I to L_interval} holds.
\end{proof}

\begin{corollary}
\label{C:connected_interval}
Let~$\mu$ be arbitrarily specified, and let~$\nu$ be a probability measure on $[n]$.  If~\eqref{nasc} holds for every connected set~$I$ in the path-graph $\PC_n$ on $[n]$, then it holds for every $I \subseteq [n]$.
\end{corollary}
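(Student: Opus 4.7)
The plan is to combine the two immediately preceding results, Proposition \ref{P:irreducible_interval} and Proposition \ref{P:I to L_interval}, to obtain the corollary with essentially no additional work. The key observation is that the hypothesis of the corollary concerns only \emph{connected} subsets of $\PC_n$, while Proposition \ref{P:I to L_interval} tells us that every irreducible $I$ can be enlarged to a connected $L \supseteq I$ without changing the relevant union of balls, and Proposition \ref{P:irreducible_interval} tells us that irreducible sets are the only ones we need to check.

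Concretely, I would first fix an arbitrary $I \in 2^{[n]}$ and invoke Proposition \ref{P:irreducible_interval}, which reduces the task to establishing
\[
\mu\!\left( \bigcup_{i \in I} B_i \right) \geq \nu(I)
\]
only for irreducible $I$. Next, for such an irreducible~$I$, I would apply Proposition \ref{P:I to L_interval} to produce a superset $L \supseteq I$ that is connected in $\PC_n$ and satisfies
\[
\bigcup_{i \in I} B_i = \bigcup_{i \in L} B_i.
\]
The hypothesis of the corollary, applied to the connected set~$L$, then gives $\mu(\cup_{i \in L} B_i) \geq \nu(L)$, and monotonicity of~$\nu$ together with $L \supseteq I$ yields $\nu(L) \geq \nu(I)$. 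Chaining these together produces
\[
\mu\!\left( \bigcup_{i \in I} B_i \right) = \mu\!\left( \bigcup_{i \in L} B_i \right) \geq \nu(L) \geq \nu(I),
\]
which is exactly the desired inequality.

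There is no genuine obstacle here: all of the hard work has been done in the two prior propositions, and the corollary is a routine chaining of their conclusions. The only thing one needs to take care of is the (minor) interpretive point that \eqref{nasc} is being read with a general probability measure~$\nu$ on $[n]$ in place of the uniform measure, consistent with how it is used in Proposition \ref{P:irreducible_interval}. Given that consistency, the argument above is complete.
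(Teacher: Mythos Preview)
Your proposal is correct and matches the paper's own proof essentially line for line: reduce to irreducible~$I$ via Proposition~\ref{P:irreducible_interval}, enlarge to a connected $L \supseteq I$ via Proposition~\ref{P:I to L_interval}, then chain $\mu(\cup_{i \in I} B_i) = \mu(\cup_{i \in L} B_i) \geq \nu(L) \geq \nu(I)$. Your remark about reading~\eqref{nasc} with a general~$\nu$ is also apt and consistent with the paper's usage.
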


\begin{proof}
By Proposition \ref{P:irreducible_interval}, it is sufficient that~\eqref{nasc} hold for every irreducible~$I$.  But in that case we can apply 
Proposition \ref{P:I to L_interval} to obtain~$L \supseteq I$ that is connected in $\PC_n$ and satisfies~\eqref{I to L_interval}, and we then observe
\[
\mu\!\left( \bigcup_{i \in I} B_i \right) 
= \mu\!\left( \bigcup_{i \in L} B_i \right) 
\geq \sum_{i \in L} \nu_i \geq \sum_{i \in I} \nu_i,
\] 
as desired.
\end{proof}

Now we come to our disintegrability theorem for the interval.

\begin{theorem}
\label{T:path main}
Consider the unit interval $\XF = [0,1]$ with its Euclidean metric $d$ and let $\mu$ be an absolutely continuous probability measure on $\XF$ with density $f$ satisfying
\begin{equation}
\label{density_interval}
c \leq f(x) \leq C\mbox{\rm \ for every $x \in \XF$},
\end{equation}
where $0 < c \leq C < \infty$. 
Given $\alpha \in (0, \infty)$ and $n\in\NB$, let
\begin{equation}
\label{rn_interval}
r(n)\equiv r(n,c, \alpha) := 4 c^{-1} \sqrt{(2 + \alpha) \frac{\ln n}{n}}.
\end{equation}
Then there exists $n_0\equiv n_0(c,C,\alpha)$ such that $n\geq n_0$ implies that $(\XF,d,\mu)$ is $(n^{-\alpha},r(n))$-disintegrable.
\end{theorem}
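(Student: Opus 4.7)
The plan is to combine Theorem~\ref{thm:existence} (which characterises disintegrability via~\eqref{nasc}), Corollary~\ref{C:connected_interval} (which reduces~\eqref{nasc} to intervals of sorted indices), and the Dvoretzky--Kiefer--Wolfowitz inequality applied to uniform order statistics. Let $Y_1\leq\cdots\leq Y_n$ denote the order statistics of $X_1,\ldots,X_n$ and let $F$ denote the (strictly increasing, continuous) CDF of~$\mu$; then the variables $U_{(k)}:=F(Y_k)$ are the order statistics of $n$ i.i.d.\ Unif$(0,1)$ samples. By DKW, the ``good event''
\[
E:=\big\{|U_{(k)}-k/n|\leq\delta(n)\text{ for all }k\in[n]\big\}
\]
with $\delta(n):=\sqrt{(\alpha+1)\ln n/(2n)}$ has $\PB$-probability at least $1-2n^{-(\alpha+1)}\geq 1-n^{-\alpha}$ once $n\geq 2$.

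Next, I would work on~$E$ to verify~\eqref{nasc} for every interval $[a,b]\subseteq[n]$ by a cluster decomposition. Partitioning $[a,b]$ into maximal sub-intervals $[s_j,t_j]$ within which consecutive $Y$-gaps satisfy $Y_{i+1}-Y_i\leq 2r(n)$, the corresponding ball-unions for distinct clusters are pairwise disjoint, so since $\sum_j(t_j-s_j+1)=b-a+1$ it suffices to verify
\[
\mu\bigg(\bigcup_{i=s}^{t}B(Y_i,r(n))\bigg)\geq\frac{t-s+1}{n}
\]
for each individual cluster $[s,t]$. Within such a cluster, consecutive balls overlap, so the union is the single interval $[\max(0,Y_s-r(n)),\min(1,Y_t+r(n))]$, and the density bound $f\geq c$ gives
\[
\mu\bigg(\bigcup_{i=s}^{t}B(Y_i,r(n))\bigg)\geq (U_{(t)}-U_{(s)})+c\min(r(n),Y_s)+c\min(r(n),1-Y_t).
\]

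I would finish with a short case analysis, conditioning on whether $Y_s\geq r(n)$ and whether $Y_t\leq 1-r(n)$. The ``interior'' case is the binding one: combining the preceding display with $U_{(t)}-U_{(s)}\geq(t-s)/n-2\delta(n)$ on~$E$ reduces the required bound to the arithmetic inequality $2cr(n)\geq 2\delta(n)+1/n$, while the three boundary subcases each reduce to a weaker inequality of the form $cr(n)-\delta(n)\geq O(1/n)$. Both follow by direct computation, since with $r(n)=4c^{-1}\sqrt{(2+\alpha)\ln n/n}$ and $\delta(n)=\sqrt{(\alpha+1)\ln n/(2n)}$ the differences $2cr(n)-2\delta(n)$ and $cr(n)-\delta(n)$ are of order $\sqrt{\ln n/n}$, which dominates $1/n$ once~$n$ exceeds some threshold $n_0(c,\alpha)$. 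The step I expect to be the main obstacle is the boundary subcase analysis, which requires some bookkeeping to confirm that the constant $4$ appearing in $r(n)$ is sufficient in every case, but it introduces no deep new idea beyond the DKW concentration and the cluster reduction.
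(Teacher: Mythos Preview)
Your proof is correct and takes a genuinely different (and in several respects cleaner) route from the paper's. The paper handles the covering event separately via an external result of Reznikov, then applies a union bound over the $\binom{n+1}{2}-1$ connected index-intervals, controlling each one individually by recognising the relevant statistic as a Beta random variable and invoking a Chernoff-type binomial tail bound. You instead control everything at once with a single Dvoretzky--Kiefer--Wolfowitz event~$E$, after which the verification of~\eqref{nasc} is purely deterministic arithmetic. This avoids both the external covering lemma and the Chernoff computation, and as a bonus your argument never uses the upper density bound~$C$ (so your $n_0$ depends only on $c$ and $\alpha$). The paper's more hands-on per-interval approach does not seem to buy anything here, though its structure is reused in the later graph and cube reductions, which would work equally well on top of your one-dimensional argument.

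One small logical wrinkle: Corollary~\ref{C:connected_interval} is stated under the standing hypothesis that the balls cover~$\XF$, and you invoke it without establishing covering. This is easily repaired, since on your event~$E$ one has $U_{(k+1)}-U_{(k)}\leq 1/n+2\delta(n)$, hence $Y_{k+1}-Y_k\leq c^{-1}(1/n+2\delta(n))<2r(n)$ for~$n$ large (and similarly $Y_1<r(n)$, $1-Y_n<r(n)$), so covering holds on~$E$. Once you add this observation, your cluster decomposition becomes redundant---every interval $[a,b]$ is already a single cluster---though it does no harm. Alternatively, you could drop the appeal to Corollary~\ref{C:connected_interval} altogether and apply your cluster decomposition directly to an arbitrary $I\subseteq[n]$: for each cluster with $m$ points and extreme ranks $i_1<i_m$ the same estimate gives $\mu(\text{cluster})\geq (i_m-i_1)/n-2\delta(n)+2cr(n)\geq (m-1)/n-2\delta(n)+2cr(n)\geq m/n$, and disjointness of cluster ball-unions yields~\eqref{nasc} for~$I$. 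Finally, your instinct that the boundary subcases are the obstacle is slightly off: they are in fact strictly easier than the interior case, since (for instance) when $Y_s<r(n)$ one has $\mu(\text{cluster})\geq U_{(t)}+cr(n)\geq t/n-\delta(n)+cr(n)$, and the target $(t-s+1)/n\leq t/n$ then requires only $cr(n)\geq\delta(n)$.
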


\begin{proof}[Proof of Theorem \ref{T:path main}]
Let $X_1,X_2,\dots$ be i.i.d.\ with distribution $\mu$ on a probability space $(\Psi,\FC,\PB)$, denote $\BC_n:= (B(X_1,r(n)), \ldots, B(X_n, r(n)))$ and recall the associated multifunction $F_{r(n),X_{[n]}}$ from \eqref{eq:Fn}.  We will show that
\begin{align}
H_n 
&= \{\mbox{$\BC_n$ does not cover~$\XF$}\} \label{Gnc_interval} \\
&{} \qquad {} \bigcup \{\mbox{$\BC_n$ covers~$\XF$ 
and there is no disintegration of $\mu$ along $F_{r(n),X_{[n]}}$}\} \nonumber
\end{align}
is contained in a set of $\PB$-probability at most $n^{ - \alpha}$.

One can handle the first of the two sets appearing in~\eqref{Gnc_interval} immediately.  Applying \cite[Corollary~2.9]{reznikov}, there exists $c_1 \equiv c_1(c, C, \alpha)$ and $n_1 \equiv n_1(c, C, \alpha)$ such that, for any choice of $r(n)$ satisfying
\begin{equation}
\label{first_interval}
r(n)\geq c_1 \frac{\ln n}{n}, \quad n \geq n_1,
\end{equation}
we have
\[
\PB(\mbox{$\BC_n$ does not cover~$\XF$}) \leq \frac{1}{2} n^{ - \alpha}, \quad n \geq n_1.
\]

Now we consider the second set in the union in~\eqref{Gnc_interval}.  Let $Y_1, \ldots, Y_n$ denote the order statistics for 
$X_1, \ldots, X_n$; let $\widetilde{B}_i := B(Y_i, r(n))$ for $i \in [n]$; and let $\widetilde{\BC}_n := (\widetilde{B}_1, \ldots, \widetilde{B}_n)$.  Of course, $\BC_n$ covers~$\XF$ if and only if $\widetilde{\BC}_n$ does; moreover, if these equivalent conditions are met, then, by the symmetry of the uniform distribution on $[n]$, there is an obvious one-to-one correspondence between disintegrations of $\mu$ with respect to $F_{r(n),X_{[n]}}$ and those with respect to $F_{r(n),Y_{[n]}}$.  Let $*_I$ denote the inequality appearing in~\eqref{nasc} when $\BC_n$ is replaced by $\widetilde{\BC}_n$.  For the second set in the union in~\eqref{Gnc_interval}, we observe
\begin{align}
\lefteqn{\hspace{-.2in}
\{\mbox{$\BC_n$ covers~$\XF$ and there is no disintegration of $\mu$ with respect to $F_{r(n),X_{[n]}}$}\}} \nonumber \\
&= 
\{\mbox{$\widetilde{\BC}_n$ covers~$\XF$ and there is no disintegration of $\mu$ with respect to $F_{r(n),Y_{[n]}}$}\} \nonumber \\
&= \cup_{I \in 2^{[n]}} \{\mbox{$\widetilde{\BC}_n$ covers~$\XF$ and $*_I$ fails}\} \nonumber \\
&= \cup_{\mbox{\scriptsize $\PC_n$-connected $I \in 2^{[n]}$}} \{\mbox{$\widetilde{\BC}_n$ covers~$\XF$ and $*_I$ fails}\} \nonumber \\
&= \bigcup_{\mbox{\scriptsize nonempty $\PC_n$-connected $I \subsetneqq [n]$}} 
\left\{ \mbox{$\widetilde{\BC}_n$ covers~$\XF$ and\ }\mu\left( \bigcup_{i \in I} \widetilde{B}_i \right) < \frac{|I|}{n} \right\}, \label{union_interval}
\end{align}
where the second equality holds by Theorem \ref{thm:existence}, the third equality holds by Corollary \ref{C:connected_interval}, and the fourth equality holds because the inequality in~\eqref{union_interval} cannot hold when~$I$ is empty nor, if $\widetilde{\BC}_n$ covers~$\XF$, when 
$|I| = n$.

Fixing~$n$, it will suffice to deal with each of the $\binom{n + 1}{2} - 1$ sets~$I$ appearing in~\eqref{union_interval} individually.  Fix such a set~$I$ and write
\[
I = \{ i_0 + 1, \ldots, i_0 + k\}
\] 
with $k \in [n - 1]$.
If $\widetilde{\BC}_n$ covers~$\XF$ and $\mu(\cup_{i \in I} \widetilde{B}_i) < k / n$ for such an~$I$, then 
\[
\mu(\cup_{i \in I} \widetilde{B}_i) = S_I + T_I,
\]
where 
\[
S_I = Y_{i_0 + k} - Y_{i_0 + 1}
\]
and $T_I$ is at least the $\mu$-value of a ball of radius $r(n)/ 2$, so that $T_I \geq \frac{1}{2} c r(n)$ by~\eqref{density_interval}.  Therefore, 
\begin{equation}
\label{bounding event_interval}
\left\{ \mbox{$\widetilde{\BC}_n$ covers~$\XF$ and\ }\mu\left( \bigcup_{i \in I} \widetilde{B}_i \right) < \frac{k}{n} \right\}
\subseteq \left\{ S_I < \frac{k}{n} - \frac{1}{2} c r(n)\right\} =: E \equiv E_{n, I}. 
\end{equation}

But (with $Y_0 := 0$ and $Y_{n + 1} := 1$) the joint distribution of the $\mu$-values of the $n + 1$ order-statistics spacings 
$Y_i - Y_{i - 1}$, $i \in [n + 1]$ is Dirichlet$(1, \ldots, 1)$; this is classical in the special case that~$\mu$ is the uniform distribution and follows in general from the fact that if $X \sim \mu$ satisfying~\eqref{density_interval} with distribution function~$F$, then $F(X)$ is uniformly distributed.  So the distribution of $S_I$ is unit mass at~$0$ if 
$k = 1$ and Beta$(k - 1, n + 1 - k)$ if $k \geq 2$.  If $k = 1$, then $\PB(E) = 0$ if $r(n)\geq 2 / (c n)$.  If $k \geq 2$, then $S_I$ has the distribution of the order statistic of rank $k - 1$ from a Uniform$(0, 1)$ sample of size~$n$, and so $\PB(E) = 0$ unless 
$k > \frac12 c n r(n)$, in which case 
\begin{equation}
\label{binomial_interval}
\PB(E) = \PB(V \geq k - 1),
\end{equation}
where $V \equiv V_{n, k} \sim \mbox{Binomial}\!\left( n, \frac{k}{n} - \frac12 c r(n)\right)$.

Assembling the information obtained thus far, we have seen that if $r(n)\geq c_2 \frac{\ln n}{n}$ for $n \geq n_1$, where $c_1$ is as at~\eqref{first_interval} and $c_2 := \max\{c_1, 2 / c\}$, then for $n \geq n_1$ the set $H_n$ is contained in an 
$\FC$-event of $\PB$-probability at most
\begin{align}
\lefteqn{\hspace{-0.5in}\frac12 n^{ - \alpha} + \sum_{k = \lfloor \frac12 c n r(n)\rfloor + 1}^{n - 1} (n + 1 - k)\,\PB(V_{n, k} \geq k - 1)} \nonumber \\
&\leq \frac12 n^{ - \alpha} + n \sum_{k = \lfloor \frac12 c n r(n)\rfloor + 1}^n \PB(V_{n, k} \geq k - 1). \label{two terms_interval}
\end{align}
To proceed, for the binomial probabilities here we use a Chernoff bound in the manageable form 
of~\cite[Corollary~4.4]{Mulzer}.  For $k \leq c n r(n)- 1$, the bound is 
\begin{equation}
\label{bound1_interval}
\exp[ - \tfrac14 \left( \tfrac12 c n r(n)- 1 \right)]; 
\end{equation}
otherwise, the bound is
\begin{equation}
\label{bound2_interval}
\exp\!\left[ - \frac14 \frac{\left( \frac12 c n r(n)- 1 \right)^2}{k - \frac12 c n r_n} \right] 
\leq \exp\!\left[ - \frac14 \frac{\left( \frac12 c n r(n)- 1 \right)^2}{n \left( 1 - \frac12 c r(n)\right)} \right]. 
\end{equation}
If $r(n)< 2 / c$, as we now assume, then one quickly verifies that~\eqref{bound2_interval} is the larger of the two bounds.  In that case, we can conclude for $n \geq n_1$ that $H_n$ is contained in a set of $\PB$-probability of at most
\begin{equation}
\label{two terms 2_interval}
\exp\!\left[ - \frac14 \frac{\left( \frac12 c n r(n)- 1 \right)^2}{n \left( 1 - \frac12 c r(n)\right)} \right].
\end{equation}
When $c n r(n)\geq 2$ and $c r(n)< 2$, the second term here is bounded by $\tfrac12 n^{ - \alpha}$ (where $\alpha > 0$) if (and only if) 
\begin{equation}
\label{messy_interval}
\frac12 c n r(n)\geq 2 [(2 + \alpha) \ln n + \ln 2] \left( \sqrt{\frac{n - 1}{(2 + \alpha) \ln n + \ln 2} + 1} - 1 \right) + 1.
\end{equation}
Since there exists $n_2 \equiv n_2(\alpha)$ such that the right side of~\eqref{messy_interval} is bounded by 
$2 \sqrt{(2 + \alpha) n \ln n}$ for all $n \geq n_2$, we now conclude that there exists $n_0 \equiv n_0(c, C, \alpha)$ such that
\[
r(n)= 4 c^{-1} \sqrt{(2 + \alpha) \frac{\ln n}{n}}
\]
is sufficient to bound~\eqref{two terms 2_interval} by $n^{ - \alpha}$ for all $n \geq n_0$.
\end{proof}

Essentially the same technique can be used to prove asymptotic disintegrability of the circle with the same rate, using the cycle graph $\ZB_n$ on $[n]$ in place of the path graph $\PC_n$.

\begin{theorem}\label{T:circle}
    Consider the unit circle $\XF\subset\RB^2$ equipped with geodesic metric $d$, normalised so that the diameter of $\XF$ is 1. Let $\mu$ be a probability measure on $\XF$ which is absolutely continuous with respect to the uniform probability measure, with density $f$ satisfying
    \[
    c\leq f(x)\leq C\text{ for every $x\in\XF$},
    \]
    where $0<c\leq C<\infty$. Given $\alpha\in(0,\infty)$ and $n\in\NB$, define
    \[
    r(n)\equiv r(n,c,\alpha):=2 c^{-1}\sqrt{(2+\alpha)\frac{\ln n}{n}}.
    \]
    Then there exists $n_0\equiv n_0(c,C,\alpha)$ such that $n\geq n_0$ implies $(\XF,d,\mu)$ is $(n^{-\alpha},r(n))$-disintegrable.
\end{theorem}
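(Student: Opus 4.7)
The strategy is to mirror the proof of Theorem \ref{T:path main} almost verbatim, replacing the path graph $\PC_n$ by the cycle graph $\ZB_n$ on $[n]$. After ordering the i.i.d.\ samples $X_1, \dots, X_n$ cyclically around the circle as $Y_1, \dots, Y_n$ (after fixing a basepoint), I would first establish the analogs of Proposition \ref{P:I to L_interval} and Corollary \ref{C:connected_interval}: any irreducible $I \in 2^{[n]}$ is contained in a $\ZB_n$-connected $L \supseteq I$ (i.e., a cyclic arc) with $\cup_{i\in I}\widetilde{B}_i = \cup_{i\in L}\widetilde{B}_i$. The argument is identical in spirit: if $I$ has two consecutive cyclic gaps whose endpoint balls both fail to extend into them, then the gap structure splits $I$ into pieces with disjoint ball-unions, contradicting irreducibility; otherwise one may annex a neighbor whose ball is already absorbed, repeating until a $\ZB_n$-connected set is obtained. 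Thus it suffices to verify \eqref{nasc} for $\ZB_n$-connected $I \subsetneqq [n]$, together with the coverage event $I = [n]$.

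For a $\ZB_n$-connected proper arc $I = \{i_0 + 1, \dots, i_0 + k\}$ (indices mod $n$) with $k \in [n-1]$, the key quantitative improvement over the interval arises because the circle has no boundary: the union $\cup_{i\in I}\widetilde{B}_i$ is a cyclic arc containing the span from $Y_{i_0 + 1}$ to $Y_{i_0 + k}$ extended by $r(n)$ on \emph{both} ends, so by the density lower bound $\mu\!\left(\cup_{i\in I}\widetilde{B}_i\right) \geq c(S_I + 2r(n))$. Hence failure of \eqref{nasc} at~$I$ forces $S_I < k/(cn) - 2 r(n)$, whereas the interval's analog had only slack $\tfrac12 c r(n)$. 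Passing to the $F$-transformed scale (where $F$ is a basepoint-based CDF for $\mu$ that pushes~$\mu$ forward to uniform), the cyclic arc spacings of $n$ ordered uniforms on the unit circle are Dirichlet$(1,\dots,1)$-distributed with~$n$ (rather than $n+1$) components, so the $\mu$-value of $S_I$ is Beta$(k-1, n-k+1)$-distributed for $k \geq 2$. The analog of~\eqref{binomial_interval} becomes $\PB(E) = \PB(V_{n,k} \geq k-1)$ with $V_{n,k} \sim \text{Binomial}(n-1, \tfrac{k}{n} - 2 c r(n))$.

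The coverage event is handled by the same type of argument as in Theorem \ref{T:path main} (and is in fact easier on the circle, since there is no boundary to worry about), yielding a $\tfrac12 n^{-\alpha}$ bound for $n$ large whenever $r(n) \gtrsim \ln n / n$. One then union-bounds over the $n(n-1)$ nonempty proper $\ZB_n$-connected arcs and applies the Chernoff estimate of \cite[Corollary 4.4]{Mulzer} in exactly the form used for the interval. The factor of~$4$ improvement in the slack (from $\tfrac12 c r(n)$ to $2 c r(n)$) precisely halves the required constant in $r(n)$, so that $r(n) = 2 c^{-1} \sqrt{(2+\alpha)\ln n/n}$ suffices to bound the total failure probability by $n^{-\alpha}$ for $n \geq n_0(c, C, \alpha)$. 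The main obstacle is bookkeeping: verifying the reducibility/extension argument in the cyclic setting without sign or wrap-around errors, and being careful that the cyclic spacings have $n$ (not $n+1$) Dirichlet components so that the Beta parameters and the binomial sample size shift by one relative to the interval case.
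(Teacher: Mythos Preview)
Your proposal is correct and follows precisely the approach the paper indicates (the paper gives no detailed proof, only the one-sentence remark that ``essentially the same technique'' works ``using the cycle graph $\ZB_n$ on $[n]$ in place of the path graph $\PC_n$''); your identification of the cyclic irreducibility argument, the $n$-component Dirichlet for cyclic spacings, and the $\mathrm{Binomial}(n-1,\cdot)$ shift are all on target. One caution on constants: on a circle of geodesic diameter~$1$ the circumference is~$2$, so the density bound $f\geq c$ is with respect to uniform \emph{probability}, and the two $r(n)$-extensions contribute $\mu$-mass at least $c\,r(n)$ (not $2c\,r(n)$); this factor-of-$2$ (not~$4$) improvement over the interval's $\tfrac12 c\,r(n)$ is exactly what halves the constant from $4c^{-1}$ to $2c^{-1}$, so recheck that normalization when you write out the details.
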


\subsection{Embeddings of finite connected simple graphs}\label{sec:graphs}

In this subsection, we show that embeddings of finite connected simple graphs in the plane are asymptotically disintegrable with similar rates to those of the interval and the circle. We will find the following general comparison result useful in dealing with embeddings of finite connected simple graphs.

\begin{proposition} \label{P:ordered metrics}
Consider two metrics $d_1$ and $d_2$ on a metric space $\XF$ which induce the same topology and hence the same Borel $\sigma$-algebra.  Let $\mu$ be a Borel probability measure on 
$\XF$.  Fix $n \geq 1$ and let~$\nu$ be a probability measure on $[n]$.  Given $x \in \XF$ and $r > 0$, write $B^{(k)}(x, r)$ for the closed $d_k$-ball of radius~$r$ centered at~$x$ for $k = 1,2$.   

Now fix $r > 0$, let $X_1, X_2, \ldots$ be i.i.d.\ with distribution~$\mu$ on a probability space $(\Psi, \FC, \PB)$, let 
$\BC^{(k)}_n := (B^{(k)}(X_1, r(n)), \ldots, B^{(k)}(X_n, r(n)))$ and let $F^{(k)}$ denote the corresponding multifunction defined as in \eqref{eq:Fn}, for $k = 1, 2$ . Assume that $d_1(x, y) \leq d_2(x, y)$ for $(\mu \times \mu)$-almost every $x, y \in \XF$.  Then 
\begin{align*}
\{& \text{$\BC^{(2)}$ covers $\XF$ and $(\mu_1, \ldots, \mu_n)$ is a disintegration of $\mu$ along $F^{(2)}$ with respect to $\nu$} \} \\
&\subseteq \{ \text{$\BC^{(1)}$ covers $\XF$ and $(\mu_1, \ldots, \mu_n)$ is a disintegration of $\mu$ along $F^{(1)}$ with respect to $\nu$} \}.
\end{align*}
\end{proposition}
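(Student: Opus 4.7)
The plan rests on the observation that the hypothesis $d_1 \leq d_2$ immediately gives ball inclusions $B^{(2)}(x,r) \subseteq B^{(1)}(x,r)$ wherever it holds pointwise; both the covering property for $\XF$ and the support condition for a disintegration then transfer directly from $F^{(2)}$ to $F^{(1)}$. The technical annoyance is that the inequality is only guaranteed $(\mu \times \mu)$-almost everywhere, so these ball inclusions may fail on null sets, and we must use a Fubini-type argument combined with absolute continuity of the $\mu_i$'s to sidestep this.

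First I would set $N := \{(x,y) \in \XF \times \XF : d_1(x,y) > d_2(x,y)\}$, note that $(\mu\times\mu)(N) = 0$ by hypothesis, and apply Fubini's theorem to obtain a $\mu$-null set $N_0 \subseteq \XF$ such that the section $N_x := \{y : (x,y) \in N\}$ is $\mu$-null for every $x \notin N_0$. Since each $X_i$ has distribution $\mu$, the event $G := \bigcap_{i \in [n]} \{X_i \notin N_0\}$ has $\PB$-probability $1$, so the claimed event inclusion need only be checked on $G$.

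Second I would exploit the disintegration formula $\mu = \frac{1}{n} \sum_{i=1}^n \mu_i$, which yields $\mu_i \leq n\mu$ and hence $\mu_i \ll \mu$. On $G$ we have $\mu(B^{(2)}(X_i,r) \setminus B^{(1)}(X_i,r)) \leq \mu(N_{X_i}) = 0$, so absolute continuity gives $\mu_i(B^{(2)}(X_i,r) \setminus B^{(1)}(X_i,r)) = 0$ as well. This upgrades the hypothesis $\mu_i(B^{(2)}(X_i,r)) = 1$ to $\mu_i(B^{(1)}(X_i,r)) = 1$, and since $B^{(1)}(X_i,r)$ is closed the support inclusion $\text{supp}(\mu_i) \subseteq B^{(1)}(X_i,r)$ follows. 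The disintegration formula itself is unaffected, so $(\mu_1, \ldots, \mu_n)$ disintegrates $\mu$ along $F^{(1)}$ with respect to $\nu$.

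The main obstacle I expect is verifying the literal pointwise covering of $\XF$ by $\BC^{(1)}$ from covering by $\BC^{(2)}$, since covering is a set-theoretic condition while our metric comparison is almost-everywhere. Under the paper's standing convention that $\text{supp}(\mu) = \XF$ this is painless: $\bigcup_{i \in [n]} B^{(1)}(X_i, r)$ is a finite union of closed sets which contains each $\text{supp}(\mu_i)$ and therefore contains $\text{supp}(\mu) = \XF$. Without that convention, the cleanest escape is to interpret the event inclusion modulo $\PB$-null sets, observing that for any $y$ covered by some $B^{(2)}(X_j, r)$, either $y \in B^{(1)}(X_j, r)$ as well or $(X_j, y) \in N$, and the latter occurs only on a $\PB$-null event. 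Apart from this covering subtlety, every step is a direct combination of Fubini, absolute continuity via the disintegration formula, and the closure of metric balls.
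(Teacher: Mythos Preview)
Your argument is correct and rests on the same core idea as the paper: the ball inclusion $B^{(2)}(x,r) \subseteq B^{(1)}(x,r)$ transfers both the covering and the support conditions from $F^{(2)}$ to $F^{(1)}$. The paper's proof is a two-sentence remark that simply treats $d_1 \leq d_2$ as holding pointwise everywhere; this is in fact justified under the paper's standing convention $\text{supp}(\mu) = \XF$, since $d_1$ and $d_2$ (inducing the same topology) are both continuous on $\XF \times \XF$, so $\{d_1 > d_2\}$ is an open set of $(\mu \times \mu)$-measure zero and hence empty. Your Fubini/absolute-continuity route is a legitimate alternative that avoids this continuity upgrade and makes explicit how the almost-everywhere hypothesis propagates through the disintegration.

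One small correction: the proposition allows a general probability measure~$\nu$ on $[n]$, so the disintegration formula reads $\mu = \sum_i \nu_i \mu_i$ rather than $\frac{1}{n}\sum_i \mu_i$. You still obtain $\mu_i \ll \mu$ for every~$i$ with $\nu_i > 0$, and for~$i$ with $\nu_i = 0$ the support condition is vacuous (it is required only $\nu$-almost everywhere), so your argument is unaffected.
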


\begin{proof}
This is quite easy.  Because $d_1 \leq d_2$, it follows that $B^{(2)}(x, r) \subseteq B^{(1)}(x, r)$ for every~$r$ and~$x$.  The result follows.
\end{proof}  

Now let $\GC = (V, E)$ be a given finite connected simple graph with $|V| \geq 2$ (equivalently, with $|E| \geq 1$).  Without any concern for edge-crossings, first embed~$\GC$ as a subset $\XF$ of $\RB^2$ arbitrarily using straight-line edges.

Define a metric $d(x, y)$ in a natural way by regarding each straight-line embedded edge as having length $1 / |E|$, so that $d(x, y)$ is then 
the length of the shortest path from~$x$ to~$y$ traversing only the embedded edges (with moves from one embedded edge to another permissible only at the embedded vertices $v \in V$).  This metric extends the notion of graph-distance, scaled by the factor $1 / |E|$.

In this general context, we now come to our disintegrability theorem for graphs.

\begin{theorem}
\label{T:graph main}
Suppose that~$\mu$ has density~$f$ with respect to uniform probability on~$\XF$ such that
\begin{equation}
\label{density_graph}
c \leq f(x) \leq C\mbox{\rm \ for every $x \in \XF$},
\end{equation}
where $0 < c \leq C < \infty$. 
Given $\alpha \in (0, \infty)$, let
\begin{equation}
\label{rn_graph}
r(n)\equiv r(n,c, \alpha) := 4 c^{-1} \sqrt{(2 + \alpha) \frac{\ln n}{n}}.
\end{equation}
Then there is $n_0\equiv n_0(c,C,\alpha)$ such that $n\geq n_0$ implies that $(\XF,d,\mu)$ is $([|E| (|E| - 1)]^{|E| - 1} n^{ - \alpha},r(n))$-disintegrable (with $0^0$ interpreted as $1$).
\end{theorem}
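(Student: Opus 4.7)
The plan is to parallel the proof of Theorem~\ref{T:path main}, replacing intervals by walks in $\GC$ and absorbing the additional combinatorial complexity into the prefactor $[|E|(|E|-1)]^{|E|-1}$. Let $X_1, X_2, \ldots$ be i.i.d.\ with distribution $\mu$ on $(\Psi, \FC, \PB)$, write $\BC_n = (B(X_1, r(n)), \ldots, B(X_n, r(n)))$ with the balls now taken in the graph metric $d$, and decompose the failure event $H_n$ as in~\eqref{Gnc_interval}. For the non-covering piece, applying \cite[Corollary~2.9]{reznikov} to the compact metric space $(\XF, d)$ (viewed as a union of $|E|$ isometric intervals of graph length $1/|E|$, each satisfying~\eqref{density_graph}) yields a probability of at most $\frac12 [|E|(|E|-1)]^{|E|-1} n^{-\alpha}$ for $n$ sufficiently large.

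The main work is to bound the probability that $\BC_n$ covers $\XF$ yet~\eqref{nasc} fails. I would first extend Proposition~\ref{P:irreducible_interval} and Corollary~\ref{C:connected_interval} to $\GC$: it suffices to verify~\eqref{nasc} for irreducible $I$, and each such $I$ admits an enlargement $L \supseteq I$ with $\bigcup_{i \in L} B(X_i, r(n)) = \bigcup_{i \in I} B(X_i, r(n))$ whose indexed sample points ``trace out'' a walk $W$ in $\GC$, that is, a sequence of edges $e_{j_1}, e_{j_2}, \ldots$ with consecutive edges sharing a vertex. The proof would imitate Proposition~\ref{P:I to L_interval}: any gap between sample points along a single edge, or across a vertex, is either bridged (allowing an index to be annexed to $I$) or else exceeds $2 r(n)$ in the graph metric, contradicting irreducibility.

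Given this reduction, I would complete the analysis by an edge-wise Dirichlet argument together with a union bound over walks. Fixing a walk $W$ and conditioning on the number of sample points in each edge of $W$, the ordered positions along any single edge are distributed as the order statistics of an i.i.d.\ Uniform sample (after the quantile transform available under~\eqref{density_graph}), so the spacings are jointly Dirichlet distributed; a Chernoff estimate of the form~\eqref{bound2_interval} then bounds the probability that~\eqref{nasc} fails for the points in $W$ by $O(n^{-(\alpha+2)})$ for $r(n)$ as in~\eqref{rn_graph}. Each walk is specified by an initial edge (at most $|E|$ choices) and, at each of at most $|E|-1$ subsequent steps, an incident edge (at most $|E|-1$ choices), yielding at most $[|E|(|E|-1)]^{|E|-1}$ walks in total. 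A union bound absorbs this combinatorial factor, together with the $O(n^2)$ choices of consecutive sample indices along a walk as in~\eqref{union_interval}, and yields the stated rate.

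The hardest step, as is typical in generalizing from paths to branching structures, is the reduction from irreducibility to walks. At a vertex $v$ of degree $\geq 3$, a ball $B(X_i, r(n))$ centered near $v$ simultaneously occupies portions of every edge incident to $v$, so a single sample point can ``bridge'' subsamples living in disjoint edges in a manner with no analogue in the interval or circle settings. Reformulating Proposition~\ref{P:I to L_interval} to handle such branchings cleanly---and choosing the enlargement $L \supseteq I$ and the associated walk $W$ so as to avoid double-counting in the subsequent union bound---is expected to be the principal technical hurdle.
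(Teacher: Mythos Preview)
Your approach is genuinely different from the paper's, and the step you flag as ``the principal technical hurdle'' is exactly where the paper does something you do not anticipate.

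The paper does \emph{not} attempt to generalise Proposition~\ref{P:I to L_interval} to walks in~$\GC$ at all. Instead it proceeds by two reductions. First, it reduces an arbitrary finite connected simple graph~$\GC$ to a \emph{tree} with the same number of edges: fix a spanning tree~$T$ of~$\GC$, and for each edge $\{v_i,v_j\}$ not in~$T$ detach it at~$v_j$ and reattach it to a new leaf $v_{i,j}$. The resulting tree~$\widetilde T$ has a metric~$\tilde d$ satisfying $d\leq \tilde d$ (under the obvious identification), so by Proposition~\ref{P:ordered metrics} disintegrability on~$\widetilde T$ implies disintegrability on~$\GC$. Second, it reduces any tree with~$|E|$ edges to the path by induction on the number $\sigma(T)$ of vertices of degree~$\neq 2$: given a tree with $\sigma(T)=s\geq 3$, for any ordered pair of leaves $(\lambda_1,\lambda_2)$ one detaches the path from~$\lambda_2$ up to its nearest ancestor of degree~$\geq 3$ and glues it onto~$\lambda_1$, obtaining a tree~$T'$ with $\sigma(T')\leq s-1$. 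The key observation is that for each fixed~$I$ one can always choose the pair $(\lambda_1,\lambda_2)$ so that the $\mu$-measure of $\bigcup_{i\in I}B(X_i,r(n))$ does not increase under this transformation; a union bound over the at most $|E|(|E|-1)$ ordered leaf-pairs then gives $p_s\leq |E|(|E|-1)\,p_{s-1}$, whence the prefactor $[|E|(|E|-1)]^{|E|-1}$ after $|E|-1$ steps back to the path case, which is Theorem~\ref{T:path main}.

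Your direct-walk strategy, by contrast, has real gaps beyond the one you acknowledge. At a vertex of degree~$\geq 3$ an irreducible~$I$ can produce a union $\bigcup_{i\in I}B(X_i,r(n))$ that is genuinely tree-shaped (a ``Y''), so any covering walk must backtrack; walks are then no longer bounded in length by~$|E|$, and your count of $[|E|(|E|-1)]^{|E|-1}$ walks is unjustified. More seriously, the decomposition $\mu(\bigcup_{i\in I}\widetilde B_i)=S_I+T_I$ with Dirichlet-distributed~$S_I$ is what makes~\eqref{bounding event_interval}--\eqref{binomial_interval} work, and it relies on the union of balls being an interval; along a walk that crosses a branching vertex, a single ball contributes mass to several incident edges simultaneously, and there is no clean spacing statistic whose distribution you control. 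The paper's two-step reduction is designed precisely so that one never has to confront either issue: one lands back on~$[0,1]$ and invokes Theorem~\ref{T:path main} as a black box.
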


\begin{remark}
\label{R:two graph examples}
In particular, if $|V| = 2$ and $|E| = 1$, then $\XF$ can be taken to be the unit interval, as 
in Section \ref{sec:interval}, and then Theorem \ref{T:graph main} replicates Theorem \ref{T:path main}.
\end{remark}

\begin{proof}[Proof of Theorem \ref{T:graph main}]
A moment's reflection reveals that the theorem is equivalent to the same assertion when~$\XF$ is reduced by deleting the finite set consisting of all the embedded vertices.  So in proving the theorem we will sometimes, and without comment, consider~$\XF$ as reduced in that fashion.  This is useful because we will sometimes transform one graph to another, with the edges transformed in an isomorphic manner but with some vertices duplicated.  (See, for instance, Example \ref{E:triangle}.)

Theorem \ref{T:graph main} is established immediately from Corollary \ref{C:graphs to trees} (an immediate corollary of Lemma \ref{L:Omega tOmega}) and Proposition \ref{P:trees} to follow.
\end{proof}

To set up Lemma \ref{L:Omega tOmega}, consider the following construction and discussion.
Let $\GC = (V, E)$ be a given finite connected simple graph, and enumerate the vertices~$V$ as $v_1, \ldots, v_{|V|}$ in fixed but arbitrary fashion.  From an embedding~$\XF$ of~$\GC$ in $\RB^2$, construct an (embedded) tree~$\widetilde{T}$ as follows.  Choose arbitrarily and fix a spanning tree~$T$ for~$\GC$ and leave the embeddings of the edges in~$T$ undisturbed in creating~$\widetilde{T}$ and its embedding. For each pair $\{v_i, v_j\}$ of vertices in~$V$ with $i < j$ that are neighbours in $\GC$ but not neighbours in~$T$, delete the edge from $v_i$ to $v_j$ (and its embedding) and draw a line (segment) from $v_i$ to a(n arbitrarily chosen) point $v_{i, j}$ at the same distance from $v_i$ as $v_j$.  Delete all vertices from the embedding $\widetilde{\XF}$ of~$\widetilde{T}$.  

Then~$\widetilde{T}$ also has $|E|$ edges, and (before the deletion of vertices) its embedding~$\widetilde{\XF}$ has edge-lengths equal to those for~$\GC$.  In fact, there is an obvious one-to-one correspondence~$h$ between~$\XF$ and~$\widetilde{\XF}$.  Moreover, if~$d$ is the metric 
on~$\XF$ and~$\tilde{d}$ is the metric on~$\widetilde{\XF}$, then it is clear that
\begin{equation}
\label{ordered metrics}
d(x, y) \leq \tilde{d}(h(x), h(y))\mbox{\rm\ for $x, y \in \XF$}.
\end{equation}

\begin{example}
\label{E:triangle}
As a simple example, if~$\GC$ is a triangle embedded into~$\RB^2$ with vertices $v_1 = (0, 0)$, $v_2 = (1/3, 0)$, and 
$v_3 = (1/6, \sqrt{1/12})$ and the spanning tree~$T$ consists of the edges between $v_1$ and $v_2$ and between $v_1$ and $v_3$, then $\widetilde{T}$ has four vertices ($v_1$, $v_2$, $v_3$, and $v_{2, 3}$) and three edges ($\{v_1, v_2\}$, $\{v_1, v_3\}$, and 
$\{v_2, v_{2, 3}\}$), and the edge $\{v_2, v_{2, 3}\}$ in~$\widetilde{T}$ can be embedded as the line segment joining $(1/3, 0)$ and $(2/3, 0)$.  Illustrating~\eqref{ordered metrics}, if $x = \left( \frac{1}{12}, \frac{1}{2} \sqrt{\frac{1}{12}} \right) \in \XF$ and $y = \left( \frac{1}{4}, \frac{1}{2} \sqrt{\frac{1}{12}} \right) \in \XF$, then $h(x) = x \in \widetilde{\XF}$ and $h(y) = (\frac{1}{2}, 0) \in \widetilde{\XF}$, and
\[
d(x, y) = \tfrac{1}{3} < \tfrac{2}{3} = \tilde{d}(h(x), h(y)). 
\]
\end{example}

When combined with Proposition \ref{P:ordered metrics}, our discussion has established the following result.

\begin{lemma}\label{L:Omega tOmega}
If the conclusion of Theorem \ref{T:graph main} holds for~$\widetilde{\XF}$, then it holds for~$\XF$.~\qed 
\end{lemma}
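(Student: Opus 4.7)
My plan is to invoke Proposition~\ref{P:ordered metrics} after transporting the assumed disintegrability of the tree $(\widetilde{\XF}, \tilde{d}, \tilde{\mu})$ back to $\XF$ via the bijection $h$. First, I would work throughout with $\XF$ and $\widetilde{\XF}$ reduced by deletion of their finitely many embedded vertices, as sanctioned by the opening remark of the proof of Theorem~\ref{T:graph main}. With vertices removed, $h$ restricts on each open edge to an arc-length-preserving bijection with the corresponding open edge of $\widetilde{\XF}$, so $h$ sends uniform probability on $\XF$ to uniform probability on $\widetilde{\XF}$. Setting $\tilde{\mu} := h_* \mu$, this makes $\tilde{\mu}$ absolutely continuous with respect to uniform probability on $\widetilde{\XF}$ with density $\tilde{f} := f \circ h^{-1}$ still satisfying $c \leq \tilde{f} \leq C$, so the hypotheses and hence the assumed conclusion of Theorem~\ref{T:graph main} apply to $(\widetilde{\XF}, \tilde{d}, \tilde{\mu})$.

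Second, I would introduce the pullback metric $d'(x, y) := \tilde{d}(h(x), h(y))$ on $\XF$. The inequality~\eqref{ordered metrics} gives $d \leq d'$, and I claim that $d$ and $d'$ induce the same topology on the vertex-deleted $\XF$: because $h$ is an isometry on each open edge, $d$ and $d'$ agree on each such edge, and the open edges are the path-components of the vertex-deleted space, so both metrics induce the same disjoint-union Euclidean topology. This puts us in the setting of Proposition~\ref{P:ordered metrics} with $d_1 = d$ and $d_2 = d'$.

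Finally, given i.i.d.\ $\mu$-samples $X_1, \ldots, X_n$ on $(\Psi, \FC, \PB)$, I would set $\tilde{X}_i := h(X_i)$; then the $\tilde{X}_i$ are i.i.d.\ $\tilde{\mu}$, and the assumed conclusion of Theorem~\ref{T:graph main} for the tree produces, on an event of $\PB$-probability at least $1 - [|E|(|E|-1)]^{|E|-1} n^{-\alpha}$, a disintegration $\{\tilde{\mu}_i\}_{i=1}^n$ of $\tilde{\mu}$ along the $\tilde{d}$-ball multifunction around the $\tilde{X}_i$. A direct computation then identifies $\{\mu_i := (h^{-1})_* \tilde{\mu}_i\}_{i=1}^n$ as a disintegration of $\mu$ along the $d'$-ball multifunction around the $X_i$, and Proposition~\ref{P:ordered metrics} upgrades this to a disintegration along the $d$-ball multifunction, yielding the required disintegrability of $(\XF, d, \mu)$. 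The main subtlety I anticipate is justifying the topological equivalence of $d$ and $d'$, which would fail if vertices were retained (a sequence approaching an original vertex $v$ along a non-tree edge corresponds under $h$ to a sequence approaching a duplicate $v_{i,j}$, not $h(v)$), so the preliminary removal of vertices is essential for the application of Proposition~\ref{P:ordered metrics}.
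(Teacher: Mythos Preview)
Your proposal is correct and follows exactly the route the paper intends: the lemma is stated with a \qed\ immediately after the sentence ``When combined with Proposition~\ref{P:ordered metrics}, our discussion has established the following result,'' so the paper's proof is nothing more than Proposition~\ref{P:ordered metrics} applied via the bijection~$h$ and the inequality~\eqref{ordered metrics}. You have simply spelled out what the paper leaves implicit---in particular the verification that $d$ and the pullback metric $d'$ induce the same topology on the vertex-deleted space (a genuine hypothesis of Proposition~\ref{P:ordered metrics} that the paper does not explicitly check), and the observation that vertex deletion is precisely what makes this verification go through.
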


We then obtain the following immediate corollary.

\begin{corollary}\label{C:graphs to trees}
If Theorem \ref{T:graph main} holds whenever $\GC = (V, E)$ is a finite tree, then it holds for any finite connected simple graph.
\end{corollary}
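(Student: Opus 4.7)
The approach is a one-line deduction from Lemma \ref{L:Omega tOmega}; the only substance is bookkeeping that the $\GC \mapsto \widetilde{T}$ construction preserves all quantitative invariants appearing in the statement of Theorem \ref{T:graph main}. Starting from an arbitrary finite connected simple graph $\GC = (V, E)$ with planar embedding~$\XF$, metric~$d$, and density~$f$ satisfying~\eqref{density_graph}, I would apply the construction preceding Lemma \ref{L:Omega tOmega} to produce the tree $\widetilde{T}$ with embedding~$\widetilde{\XF}$ and metric~$\tilde{d}$, noting that~$\widetilde{T}$ is acyclic by construction (the spanning tree $T$ is retained and each non-tree edge is replaced by a pendant stub rather than reconnected, so no cycle can be formed).

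Next I would verify that $(\widetilde{\XF}, \tilde{d}, \tilde{\mu})$, with $\tilde{\mu} := h_* \mu$ the pushforward under the bijection~$h$, satisfies the hypotheses of the tree case of Theorem \ref{T:graph main}. The edge count is preserved---each non-tree edge becomes a stub of equal length, so $\widetilde{T}$ has exactly $|E|$ edges, giving the same~$r(n)$ and the same target prefactor $[|E| (|E| - 1)]^{|E| - 1}$. Because~$h$ is edge-wise length-preserving, the pushforward $\tilde{\mu}$ has density $\tilde{f} = f \circ h^{-1}$ with respect to uniform probability on~$\widetilde{\XF}$, and this density still satisfies $c \leq \tilde{f}(\tilde{x}) \leq C$ for every $\tilde{x} \in \widetilde{\XF}$.

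By the hypothesis of the corollary, Theorem \ref{T:graph main} then applies to $(\widetilde{\XF}, \tilde{d}, \tilde{\mu})$, and Lemma \ref{L:Omega tOmega} immediately transfers the conclusion back to~$(\XF, d, \mu)$. There is no real obstacle: all the mathematical content is already located in Lemma \ref{L:Omega tOmega} (whose proof combines the pointwise inequality $d(x, y) \leq \tilde{d}(h(x), h(y))$ with Proposition \ref{P:ordered metrics}) and will be located in the tree case Proposition \ref{P:trees} to follow. The corollary itself is purely an invariance-of-parameters check.
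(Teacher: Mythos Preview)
Your proposal is correct and follows exactly the paper's approach: the paper states the corollary as an ``immediate'' consequence of Lemma~\ref{L:Omega tOmega}, and your write-up simply makes explicit the bookkeeping (preservation of $|E|$, of the density bounds under the bijection~$h$, and hence of $r(n)$ and the prefactor) that the word ``immediate'' is covering. One tiny quibble: the embedding of~$\GC$ in~$\RB^2$ is \emph{not} assumed planar (edge-crossings are explicitly allowed), so you should drop the adjective ``planar''; this has no effect on the argument.
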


The proof of Theorem \ref{T:graph main} is thus reduced to the proof of the following proposition.

\begin{proposition}\label{P:trees}
Theorem \ref{T:graph main} holds whenever $\GC = (V, E)$ is a finite tree. 
\end{proposition}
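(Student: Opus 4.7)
The plan is to adapt the strategy of Theorem~\ref{T:path main} to the tree setting, replacing the one-dimensional path graph $\PC_n$ by a combinatorial structure that respects the branching of~$\GC$. Proposition~\ref{P:irreducible_interval} already reduces the verification of~\eqref{nasc} to irreducible $I \in 2^{[n]}$; a direct tree-analog of Proposition~\ref{P:I to L_interval} then shows every such~$I$ can be enlarged to some $L \supseteq I$ with $\bigcup_{i \in L} B(X_i, r(n)) = \bigcup_{i \in I} B(X_i, r(n))$ and with this union connected in~$\XF$. Since~$\XF$ is a tree, a connected union of balls is itself a finite subtree of~$\XF$, which I shall call its \emph{shape}.

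Next, I would bound the number of possible shapes by $[|E|(|E|-1)]^{|E|-1}$. The idea is to describe each shape by an edge-rooted walk in the line graph of~$\GC$: start at one of at most~$|E|$ edges, then at each of at most~$|E|-1$ further steps select from the at most~$|E|-1$ edges adjacent at the current frontier vertex (with the convention $0^0 := 1$ handling the trivial $|E|=1$ case). For each fixed shape, the connected union is further specified, up to the positions of finitely many \emph{extremal endpoints}, by the sample points it contains and by interval-like subset choices on each participating edge, of which there are $O(n^2)$ total.

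For each fixed shape, the probability bound reduces to an interval-style Chernoff estimate. Conditioning on the number of sample points falling on each edge and applying the probability-integral transform that linearizes the density within each edge (just as in Theorem~\ref{T:path main}), the $\mu$-values of the order-statistic spacings on each edge are jointly Dirichlet. Writing $\mu(\bigcup_{i \in I} B_i) = S_I + T_I$, with $S_I$ a sum of such order-statistic spacings across the edges of the shape, I would arrange $T_I$ to be a boundary-ball contribution near at least one extremal endpoint of the subtree, which exists because $I \subsetneq [n]$ and $\BC_n$ covers~$\XF$; the density bound~\eqref{density_graph} then yields $T_I \geq \tfrac12 c r(n)$. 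The failure event $\{\mu(\bigcup_{i \in I} B_i) < |I|/n\}$ is then contained in $\{S_I < |I|/n - \tfrac12 c r(n)\}$, which via the Dirichlet/Beta representation is a Binomial tail event bounded by \cite[Corollary~4.4]{Mulzer} exactly as in~\eqref{binomial_interval}.

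Putting the steps together, I would union-bound over the $[|E|(|E|-1)]^{|E|-1}$ shapes and over the $O(n^2)$ interval-like configurations within each, handle coverage failure separately via \cite[Corollary~2.9]{reznikov}, and choose $r(n)$ as in~\eqref{rn_graph} so that each Chernoff bound is of order $n^{-(2+\alpha)}$. This yields the asserted rate $([|E|(|E|-1)]^{|E|-1} n^{-\alpha}, r(n))$. The main obstacle I expect is the bookkeeping in the third step: a connected union of balls on a tree can have several extremal endpoints (one per leaf of its shape), and balls centered on one edge can spill across a vertex into adjacent edges. Isolating the $T_I$ contribution so that it retains the $\tfrac12 c r(n)$ lower bound while keeping $S_I$ expressible as a sum of independent Dirichlet-distributed spacings — uniformly across all shapes — is the technical crux of the argument.
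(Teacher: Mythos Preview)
Your approach differs fundamentally from the paper's. The paper never redoes the order-statistic/Chernoff analysis on the tree; instead it argues by induction on $\sigma(T)$, the number of vertices of degree $\neq 2$. For a tree $T$ with $\sigma(T) \geq 3$ (hence at least three leaves) and any $I \subseteq [n]$, pigeonhole supplies an ordered leaf-pair $(\lambda_1, \lambda_2)$ that are either both within $r(n)$ of some $X_i$ with $i \in I$ or both not; detaching the path from $\lambda_2$ up to its nearest branching ancestor and regluing its $\lambda_2$-end at $\lambda_1$ yields a tree $T'$ with $\sigma(T') \leq \sigma(T) - 1$ on which the pushforward measure of $\bigcup_{i \in I} B(X_i, r(n))$ can only decrease. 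A union bound over the at most $|E|(|E|-1)$ ordered leaf-pairs gives the recursion $p_s \leq |E|(|E|-1)\, p_{s-1}$, which unwinds to $[|E|(|E|-1)]^{|E|-1}$ times the path bound $p_2 \leq n^{-\alpha}$ from Theorem~\ref{T:path main}. No new tail estimate is ever computed; the constant $[|E|(|E|-1)]^{|E|-1}$ arises from the induction, not from counting shapes.

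Your direct route has a real gap at precisely the step you flag. Once a shape spans more than one edge, conditioning on the per-edge sample counts makes the spacings on different edges independent Dirichlets of \emph{different, random} sizes, so your $S_I$ is a sum of independent Betas with parameters governed by a multinomial; the clean Beta/binomial-tail identity~\eqref{binomial_interval} does not survive, and you offer no replacement. Moreover, a ball centred near a branching vertex protrudes into several edges at once, so the ``interval-like configuration'' on a boundary edge is not determined by that edge's own order statistics, and the decomposition $\mu(\bigcup_{i \in I} B_i) = S_I + T_I$ with $S_I$ a sum of per-edge spacing ranges breaks down---the cross-edge spillover belongs to neither term as defined. (A smaller issue: a shape with $\ell$ leaves has $\ell$ extremal endpoints, so the configuration count is $O(n^{\ell})$ rather than $O(n^2)$ once $\ell \geq 3$.) Straightening these difficulties essentially forces one to linearise the tree into a path, which is exactly what the paper's transformation accomplishes at the outset.
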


\begin{proof}
In this proof, we may and do assume that the embedding of the given tree $T := \GC$ has actual length $1 / |E|$ for each embedded edge; subject to that constraint, the particular embedding chosen will not matter.  The proof will employ various transformations on tree-embeddings, and every such transformation will preserve $|V|$ (and therefore also $|E|$) and length $1 / |E|$ for each embedded edge.

First, Proposition \ref{P:trees} reduces to Theorem \ref{T:path main} when $|E| = 1$.  So we may assume $|E| \geq 2$.  

For any finite tree~$T$, let $\sigma(T)$ denote the number of nodes (i.e. vertices) of~$T$ \emph{not} having degree~$2$.  Note that $\sigma(T)$ is minimized over trees~$T$ with $|E|$ edges uniquely by the path-tree; 
then~$\XF$ is (without loss of generality) the interval $(0, |E|)$.  Further, we always have $\sigma(T) \leq |V| = |E| + 1$.  

Considering only embeddings of trees~$T$ with $|E|$ edges, let
\[
p_s := \max\!\left\{ \PB\!\left( \mu\!\left( \bigcup_{i \in I} B(X_i, r(n)) \right) < \frac{|I|}{n}\mbox{\ for some $I$} \right):
\sigma(T) \leq s \right\}.
\]
Then by Theorem \ref{T:path main} we have
\begin{equation}
\label{p2}
p_2 \leq n^{ - \alpha}.
\end{equation}
We will show that
\begin{equation}
\label{ps}
p_s \leq |E| (|E| - 1) p_{s - 1}, \quad 3 \leq s \leq |E| + 1.
\end{equation}
From \eqref{p2}--\eqref{ps} it follows immediately that
\begin{equation}
\label{psbound}
p_{|E| + 1} \leq [|E| (|E| - 1)]^{|E| - 1} n^{ - \alpha}\mbox{\ for all~$s$},
\end{equation}
and Proposition \ref{P:trees} is then established.

We now complete the proof of Proposition \ref{P:trees} by establishing~\eqref{ps}. Define
\[
\tilde{p}_s := \max\!\left\{ \PB\!\left( \mu\!\left( \bigcup_{i \in I} B(X_i, r(n)) \right) < \frac{|I|}{n}\mbox{\ for some $I$} \right):
\sigma(T) = s \right\}.
\]
Since $p_s = \max\{p_{s-1},\tilde{p}_s\}$, it is sufficient to establish
\begin{equation}
\label{pts}
\tilde{p}_s \leq |E| (|E| - 1) p_{s - 1}, \quad 3 \leq s \leq |E| + 1.
\end{equation}

Considering any tree~$T$ with $\sigma(T) = s \geq 3$, there must be a node of~$T$ having degree at least~$3$; choose one such node (arbitrarily) and root the tree at that node (call it~$\rho$).  That $\sigma(T)\geq 3$ moreover implies that there must be at least~$3$ leaves.  
Given such a tree~$T$ and any ordered pair $(\lambda_1, \lambda_2)$ of leaves in~$T$, consider the following transformation $\tau(\lambda_1,\lambda_2)$ (described using informal language, in an attempt to increase understanding) to another tree $T' \equiv T'(T, \lambda_1, \lambda_2)$.  Follow the path from~$\lambda_2$ to its first ancestor, call it~$\gamma_2$, that has degree at least~$3$;  note that $\gamma_2$ could be as close to $\lambda_2$ as $\lambda_2$'s parent or as distant as~$\rho$.  Detach the path joining $\lambda_2$ to $\gamma_2$ from $\gamma_2$ and reattach that path to the tree by gluing $\lambda_2$ to $\lambda_1$, thereby transforming the detached copy of $\gamma_2$ into a leaf in $T'$.  Note that $\sigma(T') \leq \sigma(T) - 1$.  [In fact, $\sigma(T') = \sigma(T) - 1$ unless $\gamma_2$ has degree~$3$, in which case $\sigma(T') = \sigma(T) - 2$.]

Now fix a subset $I\subseteq[n]$. Recalling again that since $\sigma(T)\geq 3$ there must be at least 3 leaves, either (i) there are at least two embedded leaves $\lambda$ with the property that no $X_i$ with $i \in I$ lies within distance $r(n)$ of~$\lambda$ or (ii) there are at least two embedded leaves~$\lambda$ with the property that there exists $X_i$ with $i \in I$ lying within distance $r(n)$ of~$\lambda$.  In either case, label the two leaves (in arbitrary order) as $(\lambda_1, \lambda_2)$.    Apply the transformation $\tau(\lambda_1,\lambda_2)$ described in the preceding paragraph to obtain 
$T'(\lambda_1,\lambda_2)$, equipped with the pushforward measure $\tau(\lambda_1,\lambda_2)_*\mu$.  Then, using the notation $B_T$ (respectively, $B_{T'}$) for balls in the tree~$T$ 
(respectively, $T'(\lambda_1,\lambda_2)$), one has  $\tau(\lambda_1,\lambda_2)^{-1}\cup_{i \in I} B_{T'}(X_i, r(n)) \subseteq\cup_{i \in I} B_T(X_i, r(n))$, and therefore 
$\tau(\lambda_1,\lambda_2)_*\mu(\cup_{i \in I} B_{T'}(X_i, r(n))) \leq \mu(\cup_{i \in I} B_T(X_i, r(n)))$.  Thus if 
\[
L(T) := \{(\lambda_1,\lambda_2):\mbox{\ $\lambda_1$ and $\lambda_2$ are leaves in~$T$}\},
\] 
then
\begin{align}
\lefteqn{\hspace{-0.5in}\PB\!\left( \mu\!\left( \bigcup_{i \in I} B_T(X_i, r(n)) \right) < \frac{|I|}{n}\mbox{\ for some $I$} \right)} \nonumber \\
&\leq \sum_{(\lambda_1,\lambda_2) \in L(T)} \PB\!\left( \tau(\lambda_1,\lambda_2)_*\mu\!\left( \bigcup_{i \in I} B_{T'(\lambda_1,\lambda_2)}(X_i, r(n)) \right)  < \frac{|I|}{n}\mbox{\ for some $I$} \right).
\label{Pbound}
\end{align}
Since there are at most $(|V| - 1) (|V| - 2) = |E| (|E| - 1)$ ordered pairs of leaves in~$T$, the right side of~\eqref{Pbound} is bounded by
\[
|E| (|E| - 1) p_{s - 1},
\]
and~\eqref{pts} follows.
\end{proof}

\subsection{Higher-dimensional examples}

The purpose of this section is to prove the high-probability asymptotic disintegrability of the unit $D$-cube with various metrics, for $D\geq 1$. Our proof technique consists in reduction to the one-dimensional case; we expect that as a consequence our rates are loose. We first consider the unit $D$-cube with the $\ell^{\infty}$ metric, generalising Theorem \ref{T:path main}.

\begin{theorem}
\label{T:dd main}
Let $\XF = [0, 1]^D$ be the unit cube in dimension~$D$, endowed with the $\ell^{\infty}$ metric $d$ and its Borel $\sigma$-field,
and let~$\mu$ be an absolutely continuous probability measure on~$\XF$ with density $f$ satisfying
\[
c\leq f(x)\leq C\mbox{\rm \ for every $x \in \XF$},
\]
where $0<c\leq C<\infty$. Given $\alpha \in (0, \infty)$, let
\begin{equation}
\label{rn_dd}
  r(n)\equiv r(n,D,c,\alpha) := c^{-1/D}(4 \cdot 3^{D - 1})^{1/D} (2 + \alpha)^{1/(2D)} \left( \frac{\ln n}{n} \right)^{1/(2 D)}.
\end{equation}
Then there exists $n_0\equiv n_0(D,c,C,\alpha)$ such that for every $n\geq n_0$, the space $(\XF,d,\mu)$ is $(n^{-\alpha},r(n))$-disintegrable.
\end{theorem}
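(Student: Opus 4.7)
The plan is to verify the necessary and sufficient condition \eqref{nasc} from Theorem \ref{thm:existence} with $\PB$-probability at least $1-n^{-\alpha}$, adapting the grid-slicing and Chernoff strategy of Theorem \ref{T:path main} to exploit the product structure of $\ell^\infty$-balls. Set $m:=\lceil 1/r(n)\rceil$ and partition $[0,1]^D$ into $m^D$ axis-aligned sub-cubes $\{Q_\gamma\}_{\gamma\in[m]^D}$ of common side $1/m\leq r(n)$. Two geometric facts drive the argument: each $\ell^\infty$-ball $B(X_i,r(n))$ is itself an axis-aligned cube of side $2r(n)$ that fully contains $Q_{\gamma(X_i)}$; and such a ball can overlap at most $3^{D-1}$ neighbouring sub-cubes in each coordinate direction perpendicular to a chosen axis.

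I would then proceed in three steps. First, a binomial Chernoff bound plus a union bound over the $O(r(n)^{-D})$ sub-cubes yields an event of $\PB$-probability at least $1-\tfrac12 n^{-\alpha}$ on which each sub-cube contains $\Theta(n/m^D)$ sample points, with constants controlled by $c$ and $C$. Second, using an ``irreducibility'' reduction analogous to Proposition \ref{P:irreducible_interval} and Corollary \ref{C:connected_interval} (now applied to the $D$-dimensional grid graph on $[m]^D$), it suffices to verify \eqref{nasc} for those $I\in 2^{[n]}$ whose sub-cube shadow $S(I):=\{\gamma:\exists\,i\in I,\ X_i\in Q_\gamma\}$ is connected in the grid graph; then the density hypothesis $f\geq c$ gives
\[
\mu\!\left(\bigcup_{i\in I}B(X_i,r(n))\right)\ \geq\ \frac{c\,|S(I)|}{m^D},
\]
reducing the required inequality to $c|S(I)|/m^D\geq |I|/n$. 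Third, I would apply the 1-D Chernoff estimate from the proof of Theorem \ref{T:path main} along the principal coordinate axis of $S(I)$, using the $\ell^\infty$ product structure to absorb the cross-axis ``leakage'' into the factor $3^{D-1}$; the effective 1-D radius becomes $r(n)^D$ (the volume-order of a ball), producing a per-shape failure probability of order $\exp(-cnr(n)^{2D})$.

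The main obstacle is controlling the number of relevant shapes of $S(I)$ in the final union bound: whereas connected subsets of the 1-D path graph are intervals (with only $O(n^2)$ of them), connected subgraphs of the $D$-dim grid can be irregular and their count grows exponentially in size. To close the union bound at a polynomial-in-$n$ level, I would collapse each connected shadow to its axis-aligned bounding box, of which there are at most $O(n^{2D})$, at the cost of the multiplicative factor $3^{D-1}$ per coordinate direction that appears in the rate. Matching $cnr(n)^{2D}\gtrsim (2+\alpha)\ln n$ against the target bound $\tfrac12 n^{-\alpha}$ then yields precisely $r(n)=c^{-1/D}(4\cdot 3^{D-1})^{1/D}(2+\alpha)^{1/(2D)}(\ln n/n)^{1/(2D)}$ as in \eqref{rn_dd}; the exponent $1/(2D)$, as opposed to the natural coverage exponent $1/D$, reflects exactly the looseness of this axis-wise bounding-box reduction, in line with the authors' own remark that their higher-dimensional rates are not optimal.
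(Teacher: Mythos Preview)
Your approach differs substantially from the paper's and, as written, has a real gap. The reduction at the end of your step~2 to the inequality $c\,|S(I)|/m^{D}\geq |I|/n$ is too lossy: if $I$ consists of all indices with $X_i$ in a single sub-cube $Q_\gamma$ where the density is near~$C$, then $|S(I)|=1$ while (with high probability) $|I|\approx Cn/m^{D}$, so your target inequality becomes $c\geq C$. The true condition $\mu\big(\bigcup_{i\in I} B(X_i,r(n))\big)\geq |I|/n$ \emph{does} hold for such~$I$, but only because the balls spill beyond $Q_\gamma$ into neighbouring sub-cubes---precisely the contribution your lower bound discards. Your step~3 is presumably meant to recover this spillover, but ``apply the 1-D Chernoff along the principal coordinate axis of $S(I)$'' is not a well-defined operation: an arbitrary connected grid-shape has no canonical axis, and no Dirichlet/Beta structure for order statistics is available once the sample is scattered in $D$~dimensions. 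Separately, the reduction to connected shadows itself needs a $D$-dimensional analogue of Proposition~\ref{P:I to L_interval}, which is not obvious; the 1-D argument there used the linear ordering of the centres in an essential way, and irreducibility of~$I$ does not evidently force grid-connectedness of $S(I)$.

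The paper sidesteps all of this by a direct dimension reduction that \emph{literally} invokes Theorem~\ref{T:path main}. It slices $[0,1]^{D}$ into $k_n^{D-1}$ thin tubes of cross-section $[0,1/k_n]^{D-1}$ (with $k_n$ chosen so that $k_n\,r(n)\to 3$), and concatenates them end-to-end via a map~$S$ into a single interval $[0,k_n^{D-1}]$. The key geometric fact is that each $\ell^{\infty}$-ball $B(x,r(n))$ is carried by~$S$ to a \emph{superset} of the full-cross-section slab $B_{1}(S(x)_{1},r(n))\times [0,1/k_n]^{D-1}$; hence a disintegration of the pushforward measure along the 1-D balls $B_{1}(\cdot,r(n))$ in $[0,k_n^{D-1}]$ pulls back to a disintegration along the original $D$-dimensional balls. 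After rescaling $[0,k_n^{D-1}]$ to $[0,1]$, the effective 1-D radius is $r(n)/k_n^{D-1}\approx r(n)^{D}$, and the constants in~\eqref{rn_dd} are chosen exactly so that this equals the radius $4c^{-1}\sqrt{(2+\alpha)\ln n/n}$ required by Theorem~\ref{T:path main}. Your intuition that ``the effective 1-D radius becomes $r(n)^{D}$'' is therefore correct, but it is realised through this unfolding rather than through a grid/bounding-box analysis; the factor $3^{D-1}$ in~\eqref{rn_dd} arises from the choice $k_n r(n)\to 3$ (needed so that one ball spans the cross-section of two adjacent tubes), not from a count of neighbouring sub-cubes.
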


\begin{proof}
As we shall see, a main ingredient of the proof is the \emph{one}-dimensional Theorem \ref{T:path main}, applied (with an idea similar to that in the proof of Proposition \ref{P:ordered metrics}) after a certain ``slicing and peeling'' of~$\XF$.  To set this up, let
\begin{equation}
\label{kndef}
k_n := \left\lfloor c^{1/D}\left( \frac34 \right)^{1/D} (2 + \alpha)^{-1/(2D)} \left( \frac{\ln n}{n} \right)^{-1/(2D)} \right\rfloor.
\end{equation}
The set $\XF$ may be partitioned into $k_n^{D-1}$ ``thin tubes", each of which is identifiable with a subset, excluding only some boundary faces, of $[0,1]\times [0,1/k_n]^{D-1}$. Thus $\XF$ may be identified, up to a subset of the measure-zero boundary, with $\widetilde{\XF}:=[0,k_n^{D-1}]\times[0,1/k_n]^{D-1}$ obtained by stacking the partitioning tubes of $\XF$ end to end, insisting only that the boundary face $\{1\}\times[0,1/k_n]^{D-1}$ (respectively, $\{0\}\times[0,1/k_n]^{D-1}$) of any tube is attached (if at all) only to the boundary face $\{1\}\times[0,1/k_n]^{D - 1}$ (resp., $\{0\}\times[0,1/k_n]^{D-1}$) via a reversal in orientation of one of the tubes. We deem two such partitioning tubes (thought of in $\XF$) to be \emph{adjacent} if and only if their ends are attached in $\widetilde{\XF}$; note then that, depending on the order in which the tubes are stacked, adjacent tubes in $\XF$ may or may not be close in the $\ell^{\infty}$ metric on the space of such tubes.

Denote by $S$ the map identifying (up to sets of measure zero) $\XF$ with $\widetilde{\XF}$. Note that $S$ carries the uniform probability measure on $\XF$ to the uniform probability measure on $\widetilde{\XF}$, hence sends $\mu$ to a probability measure $\tilde{\mu}$ on $\widetilde{\XF}$ which is absolutely continuous with density $\tilde{f}$ satisfying $c\leq \tilde{f}(x)\leq C$ for all $x\in S(\XF)$. Thus $(\XF,\mu)$ and $(\widetilde{\XF},\tilde{\mu})$ are identical, through $S$, up to a set of measure zero.

We claim moreover that, for any $x\in\XF$, the ball $B(x,r(n))$ is, for some $n_1\equiv n_1(D,c,\alpha)$ and all $n\geq n_1$, transformed by $S$ into a superset of $B_1(y_1,r(n))\times [0,1/k_n]^{D-1}$, where $y = (y_1,y_{2:D}) = S(x)$ and $B_1(y_1,r(n))$ is the ball of radius $r(n)$ centred at $y_1$ in the Euclidean distance on $[0,k_n^{D-1}]$. To establish this claim, it must be shown that if
\begin{equation}\label{w}
    w = (w_1,w_{2:D}) = S(z)\in\widetilde{\XF}\mbox{\, belongs to $B_1(y_1,r(n))\times[0,1/k_n]^{D-1}$, i.e., \ $|w_1-y_1|\leq r(n)$},
\end{equation}
then $z\in B(x,r(n))$. To demonstrate this, observe first that $x$ and $z$ belong either (i) to the same tube of $\XF$ or (ii) to adjacent tubes. In case (i), $|z_1-x_1| = |w_1-y_1|\leq r(n)$ and
\begin{equation}\label{coord2}
    \|z_{2:D}-x_{2:D}\|_{\ell^{\infty}} = \|w_{2:D}-y_{2:D}\|_{\ell^{\infty}}\leq \frac{1}{k_n} \leq \frac{1}{2}r(n)< r(n),
\end{equation}
where the penultimate inequality in \eqref{coord2} holds, for some $n_1\equiv n_1(D,c,\alpha)$ and all $n\geq n_1$, because, by \eqref{rn_dd} and \eqref{kndef},
\begin{equation}
k_n r(n) \rightarrow 3\mbox{ as $n\rightarrow\infty$}.
\end{equation}
Thus, in case (i), with~$d$ denoting $\ell^{\infty}$-distance in dimension~$D$, we have 
\begin{equation}\label{dxz}
d(x,z) = \max\{|z_1-x_1|,\|z_{2:D}-x_{2:D}\|_{\ell^{\infty}}\}\leq r(n),
\end{equation}
i.e. $z\in B(x,r(n))$, as claimed. In case (ii), supposing without loss of generality that $z_1$ and $x_1$ are both closer to $1$ than to $0$ in $[0,1]$ and that the tube containing $z$ is appended to the right (positive side) of the tube containing $x$, one has
\begin{equation}
    |z_1-x_1|\leq|z_1-1|+|1-x_1| = (1-z_1)+(1-x_1) = |w_1-y_1|\leq r(n),
\end{equation}
and $\|z_{2:D} - x_{2:D}\|_{\ell^{\infty}} \leq 2 / k_n \leq r(n)$ (c.f. \eqref{coord2}).  Thus, in case~(ii), as in case~(i), \eqref{dxz} holds, i.e., $z \in B(x, r(n))$, as claimed.

Having established the claim, let $X_1,X_2,\dots$ be i.i.d.\ with distribution $\mu$ on a probability space $(\Psi,\FC,\PB)$, let $\BC_n:=(B(X_1,r(n)),\dots,B(X_n,r(n)))$ and let $F_n:\XF \rightarrow [n]$ be the associated multifunction. We now see that the collection $\BC_n$ is transformed by~$S$ into an $n$-tuple $\widetilde{\BC}_n$ of supersets of random subsets $B_1(Y^{(i)}, r(n)) \times [0, 1 / k_n]^{D - 1}$, $i \in [n]$, of~$\widetilde{\XF}$, and we denote the associated multifunction by $\widetilde{F}_n:\widetilde{\XF}\rightarrow [n]$. Further, $Y^{(1)}, \ldots, Y^{(n)}$ here form an i.i.d.\ sample from the projection $\hat{\mu}:=(\pi_1)_*\tilde{\mu}$ of $\tilde{\mu}$ to $[0, k_n^{D-1}]$.  Note that $\hat{\mu}$ is absolutely continuous, with density uniformly lower-and-upper-bounded by $c/k_n^{D-1}$ and $C/k_n^{D-1}$ respectively. Let $\widehat{\BC}_n := (B_1(Y^{(1)}, r(n)), \ldots, B_1(Y^{(n)}, r(n)))$, and let $\widehat{F}_n:[0,k_n^{D-1}]\rightarrow [n]$ be the associated multifunction.  Then
\begin{align}
G_n:=\{&\mbox{\rm $\BC_n$ covers~$\XF$ and there exists a disintegration of $\mu$ along $F_n$}\}\nonumber \\
&\supseteq 
\{\mbox{\rm $\widetilde{\BC}_n$ covers~$\widetilde{\XF}$ and there exists a disintegration of $\tilde{\mu}$ along $\widetilde{F}_n$}\}\nonumber \\
&\supseteq 
\{\mbox{\rm $\widehat{\BC}_n$ covers~$[0, k_n^{D-1}]$ and there exists a disintegration of $\hat{\mu}$ along $\widehat{F}_n$}\}\nonumber \\
&=: \widehat{G}_n.\label{containment}
\end{align} 
Since, by~\eqref{rn_dd} and~\eqref{kndef},
\[
\frac{r_n}{k_n^{D-1}} \geq 4c^{-1} \sqrt{(2 + \alpha) \frac{\ln n}{n}},
\] 
it follows from a rescaling of \eqref{containment} by $1/k_n^{D-1}$ and Theorem \ref{T:path main} (taking into account also $n_1$), that there exists $n_0 \equiv n_0(D,c,C,\alpha)$ such that, for every $n \geq n_0$, the set $\widehat{G}_n$, and therefore also the set $G_n$, contains a set of $\PB$-probability at least 
$1 - n^{-\alpha}$.
\end{proof}

Because all norms on a finite-dimensional vector space are equivalent, by invoking Proposition \ref{P:ordered metrics}, Theorem \ref{T:dd main} can be extended to any metric induced from a norm (such as the $\ell^p$ metrics with $p \geq 1$) merely by altering the required radius~\eqref{rn_dd} by a constant factor.

Here is an example for $p = 2$ (the usual Euclidean metric) in general dimension~$D$.  Since the Euclidean norm is bounded above by $\sqrt{D}$ times the $\ell^{\infty}$ norm, we have the following immediate corollary to Theorem \ref{T:dd main}.

\begin{corollary}\label{C:non d main}
Let $\XF = [0, 1]^D$ be the unit cube in dimension~$D$, endowed with the Euclidean metric $d$ and its Borel $\sigma$-field.
Suppose that~$\mu$ is an absolutely continuous probability measure on~$\XF$ with density $f$ satisfying
\begin{equation}
\label{cor density_non d}
c \leq f(x) \leq C\mbox{\rm \ for every $x \in \XF$},
\end{equation} 
where $0 < c \leq C < \infty$. Given $\alpha \in (0, \infty)$, let
\begin{equation}
\label{cor rn_non d}
r(n)\equiv r(n,D, c, \alpha) := c^{-1/D} D^{1/2} (4 \cdot 3^{D - 1})^{1/D} (2 + \alpha)^{1/(2D)} \left( \frac{\ln n}{n} \right)^{1/(2 D)}.
\end{equation}
Then there exists $n_0 \equiv n_0(D, c, C, \alpha)$ such that, for every $n \geq n_0$, the space $(\XF,d,\mu)$ is $(n^{-\alpha},r(n))$-disintegrable.
\end{corollary}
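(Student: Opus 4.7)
The plan is to reduce the Euclidean case to the already-proven $\ell^{\infty}$ case of Theorem \ref{T:dd main} via Proposition \ref{P:ordered metrics}, exploiting the fact that the radius $r(n)$ in \eqref{cor rn_non d} is exactly $\sqrt{D}$ times the $\ell^{\infty}$-radius appearing in \eqref{rn_dd}.

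Concretely, I would introduce on $\XF = [0,1]^D$ the two metrics $d_{\mathrm{Euc}}(x,y) = \|x-y\|_2$ and $d_{\infty}(x,y) = \|x-y\|_{\infty}$, which induce the same topology and therefore the same Borel $\sigma$-field; the hypothesis \eqref{cor density_non d} on $\mu$ coincides with the hypothesis of Theorem \ref{T:dd main}. Setting $r_\infty(n) := c^{-1/D}(4 \cdot 3^{D - 1})^{1/D}(2 + \alpha)^{1/(2D)}(\ln n / n)^{1/(2 D)}$ so that $r(n) = \sqrt{D}\, r_\infty(n)$, an application of Theorem \ref{T:dd main} to $(\XF, d_\infty, \mu)$ with parameters $(D,c,C,\alpha)$ yields some $n_0' \equiv n_0'(D,c,C,\alpha)$ such that for every $n \geq n_0'$ there is an event of $\PB$-probability at least $1 - n^{-\alpha}$ on which $\mu$ admits a disintegration $\{\mu_i\}_{i=1}^n$ with $\mathrm{supp}(\mu_i) \subseteq B_\infty(X_i, r_\infty(n))$ for each~$i$.

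To transfer the conclusion to the Euclidean metric, I would invoke Proposition \ref{P:ordered metrics} with $d_1 := d_{\mathrm{Euc}}$, $d_2 := \sqrt{D}\, d_\infty$, and common radius $r := r(n)$: both metrics generate the usual topology, the standard norm inequality $\|v\|_2 \leq \sqrt{D}\,\|v\|_\infty$ gives $d_1 \leq d_2$, and the $d_2$-ball of radius $r(n)$ centred at~$x$ coincides with the $\ell^\infty$-ball of radius $r_\infty(n)$ centred at~$x$. Hence, on the event above, the same disintegrating family $\{\mu_i\}_{i=1}^n$ works for the Euclidean-ball multifunction $F_{r(n),X_{[n]}}$. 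Taking $n_0 := n_0'$ completes the proof.

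I do not foresee any genuine obstacle: once one observes the identity $r(n) = \sqrt{D}\,r_\infty(n)$ and the one-line containment $B_\infty(x,r_\infty(n)) \subseteq B_{\mathrm{Euc}}(x,r(n))$, the corollary is essentially a repackaging of Theorem \ref{T:dd main} through Proposition \ref{P:ordered metrics}; the only care required is to confirm that the explicit constant $\sqrt{D}$ in \eqref{cor rn_non d} is precisely the factor that absorbs the norm comparison.
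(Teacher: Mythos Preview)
Your proposal is correct and follows essentially the same route as the paper: the paper simply notes that the Euclidean norm is bounded above by $\sqrt{D}$ times the $\ell^{\infty}$ norm and declares the result an immediate corollary of Theorem~\ref{T:dd main} via Proposition~\ref{P:ordered metrics}. Your write-up merely spells out explicitly the scaling $r(n) = \sqrt{D}\,r_\infty(n)$ and the application of Proposition~\ref{P:ordered metrics} with $d_2 = \sqrt{D}\,d_\infty$, which is exactly what the paper leaves implicit.
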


\section{Wasserstein distance rates:\ examples}
\label{sec:wdre}

In this section we discuss how well the rates $r(n)$ obtained by combining the Euclidean-distance $\XF = [0, 1]^D$ asymptotic disintegrability results (one for each dimension~$D$) of Corollary~\ref{C:non d main} (which reduces to Theorem~\ref{T:path main} when $D = 1$) with Proposition~\ref{prop:wasserstein} fares in comparison to the best high-probability Wasserstein rates we can find in existing literature.  We shall assume throughout, as is customary, that the desired failure probability is $\epsilon(n) = n^{ - \alpha}$ with $\alpha > 0$; of course, both our rates and existing rates can be improved somewhat by allowing slower convergence to zero of $\epsilon(n)$.

There are many papers that discuss Wasserstein rates at various levels of generality; a nonexhaustive list is \cite{wasserstein1d, boissard, bolley_guillin_villani, fournier_guillin, lei, weedbach} and the references therein.  The results that we find give the best existing Wasserstein rates $r(n) \equiv r(n, D, c, C, \alpha)$ (up to multiplicative constants that may depend on $D$, $c$, $C$, and $\alpha$) for $(\XF, d, \mu)$ as in Corollary~\ref{C:non d main} for every dimension $D \in \NB$, every $p \in [1, \infty)$, and every $c$, $C$, and $\alpha$ are \cite[Theorems~3.1 and~5.4]{lei}, according to which the bound $W_p(\mu, \hat{\mu}_n) \leq r(n)$ holds with probability at least $1 - n^{- \alpha}$ for some sequence $(r(n))$ satisfying
\begin{equation} \label{best known w rates}
r(n) = O\!\left( n^{ - \frac{1}{\max(2p, D)}} (\ln n)^{{\bf 1}(p = D / 2) / p} + n^{- 1 / \max(2, p)} \sqrt{\ln n} \right).
\end{equation}
We then see that the power $1 / (2 D)$ of $n^{-1}$ in~\eqref{cor rn_non d} is never worse than the leading power of $n^{-1}$ in~\eqref{best known w rates} by more than a factor of~$2$.  Moreover, the two big-oh rates agree when $p = D = 1$, our rate is worse by only a logarithmic factor when $p = D \geq 2$, and our rate is \emph{better} by a factor of $p / D$ in the power of $n^{-1}$ when $p > D \geq 1$.

\ignore{
To be more precise, for $[0,1]$ with absolutely continuous measure $\mu$ (Theorem \ref{T:path main}), and $\alpha>0$, our $(1-n^{-\alpha})$-probability rate is $W_p(\mu,\hat{\mu}_n) = r(n) = O\!\left( \left( \frac{\ln n}{n} \right)^{1/2} \right)$, which asymptotically matches the best known when $p = 1$ (obtained from \cite[top of p. 2305]{boissard}), when $1\leq p\leq 2$ and $\mu$ satisfies a log-Sobolev inquality \cite[Theorem 5.4]{lei}, and when $p = 2$ and $\mu$ satisfies a Poincar\'{e} inequality (obtained from \cite[Theorem 2.8 (ii)]{bolley_guillin_villani}). In general, for $p>1$ our rate for the interval appears to be a \emph{strict improvement} over the best known rates. Indeed, to get $W_p(\mu,\hat{\mu}_n)\leq r(n)$ with probability at least $1-n^{-\alpha}$ using \cite[Propositions 5, 8, and 20]{weedbach} with $p>1$ requires that $r(n)= \Omega\!\left( \left( \frac{\ln n}{n} \right)^{1/(2p)} \right)$, while our bound achieves $W_p(\mu,\hat{\mu}_n) = O\!\left( \left( \frac{\ln n}{n} \right)^{1/2}\right)$ with probability at least $1-n^{-\alpha}$. Of course, a tighter concentration bound on $W_p(\mu,\hat{\mu}_n)$ could be used to obtain our tighter rates when $p>1$, but all other such generally applicable concentration bounds of which we are aware (\cite[Theorem 2]{fournier_guillin}, \cite[Theorem 2.7]{bolley_guillin_villani}, \cite[Theorem 2.8]{bolley_guillin_villani}, \cite[Corollary 5.2]{lei}) yield a $p$-dependent rate like that yielded by the aforementioned Proposition~20 of \cite{weedbach}\footnote{The only exception to this is the inequality asserted in \cite[p.~2318, proof of Theorem 2.6]{boissard}; application of this estimate, in conjunction with the consequence $\EB W_p(\mu, \hat{\mu}_n) = O(n^{-1/2})$ of \cite[Theorem 5.3]{wasserstein1d}, would yield our tighter rate, but we have been unable to locate a proof for that assertion.}.

For $[0,1]^D$ with absolutely continuous measure and $D\geq 2$ (Theorem \ref{T:dd main}), our $(1-n^{-\alpha})$-probability rate $W_p(\mu,\hat{\mu}_n) = O\!\left( \left( \frac{\ln n}{n} \right)^{1/(2D)} \right)$ is worse multiplicatively by a log factor, and in the exponent by a factor of 2, than the best known $(1-n^{-\alpha})$-probability rate of $O\big(n^{-1/D}\big)$ \cite[Propositions 5, 8, and 20]{weedbach} when $p<D/2$. When $p \geq D/2$, \cite[Proposition 5]{weedbach} does not apply, but we can still derive rates from \cite[Theorem 2]{fournier_guillin}. Specifically, when $p = D/2$, \cite[Theorem 2]{fournier_guillin} gives a $(1-n^{-\alpha})$-probability rate of $r(n) = O\!\left( \left( \frac{(\ln n)^3}{n}  \right)^{1/(2p)} \right) = O\!\left(\left(\frac{(\ln n)^3}{n} \right)^{1/D}\right)$, which is better than ours by a factor of 2 in the exponent. When $p>D/2$, \cite[Theorem 2]{fournier_guillin} yields a $(1-n^{-\alpha})$-probability bound of $r(n) = \Omega\!\left( \left( \frac{\ln n}{n} \right)^{1/(2p)} \right)$, which for $D/2<p<D$ is superior to ours by a factor of $p/D$ in the exponent, coincides with ours when $p = D$, and is improved by ours by a factor of $p/D$ in the exponent when $p>D$. 
}
Of course to justify~\eqref{best known w rates} we need to verify that the conditions of \cite[Theorems~3.1 and~5.4]{lei} are met, and we do so next.  The applicability of the first of these two theorems (choosing $q > D$) is clear from the boundedness of the space $[0, 1]^D$.  For applicability of the second\footnote{In~\cite{lei}, it is claimed that the paper's Theorem~5.4 follows by ``[c]ombining the Lipschitz property of $W_p(\hat{\mu}, \mu)$ and the log Sobolev inequality under product measure [26], Theorem 5.3 and Corollary 5.7'', but there are no such numbered results either in the paper or in ``[26]'', which is our reference~\cite{ledoux}.  However, \cite[Theorem~5.4]{lei} follows by combining \cite[second paragraph of Section~5.3]{lei} with an unpublished argument of I.~Herbst explained in \cite[start of Section~2.3]{ledoux}.}, we need only note that the probability measures in Corollary~\ref{C:non d main} satisfy log-Sobolev inequalities.  This can be established by first noting that the $D$-dimensional standard normal density conditioned to $[0, 1]^D$ is a (so-called) strongly log-concave density on $\RB^D$ and therefore satisfies a log-Sobolev inequality and then (invoking~\eqref{cor density_non d}) using a perturbation technique that Ledoux attributes to Holley and Stroock~\cite{holley_stroock} and is described at the bottom of p.~200 in~\cite{ledoux} (but applied on the space $[0, 1]^D$ rather than $\RB^D$).

We believe it likely that our $D$-dimensional rate can be tightened to $W_p(\mu,\hat{\mu}_n) = O\!\left( ( \frac{\ln n}{n} )^{1/D} \right)$ for $D \geq 2$ and $p \in [1, \infty)$, but have discovered this to be a very difficult problem that we leave to future work.

\section{Conclusion and conjectures}

We have identified the notion and conditions of disintegration of a probability measure along a multifunction, and indicated its applications to providing (i)~high-probability bounds on Wasserstein distances to empirical measures and (ii)~high-probability average-case bounds on the error in approximating expectations of Lipschitz functions by empirical averages. We conclude by summarising the conjectures arising from our work.

Our rates for the asymptotic disintegrability of a metric Polish probability space $(\XF,d,\mu)$ yield (up to a multiplicative constant factor) the best known high-probability asymptotic rate for the Wasserstein distance $W_p(\mu,\hat{\mu}_n)$) when $\XF = [0,1]^D$ for all $p>D\geq  1$, but yield suboptimal rates when $D\geq 2$ and $1 \leq p\leq D$.
Our proof technique for higher-dimensional cubes, however, is based on the result for the $1$-dimensional case. We believe that a tighter analysis should be possible, enabling the (near-)optimal rate for $W_p(\mu,\hat{\mu}_n)$. More specifically, we conjecture the following.

\begin{conjecture}
    Let $\XF$ be any compact, connected, $D$-dimensional Riemannian manifold, $d$ its geodesic distance, and $\mu$ any probability measure that is absolutely continuous with respect to the volume measure, with density $f$ satisfying $0<c\leq f(x)\leq C<\infty$ for all $x\in\XF$. Then for any $\alpha\in(0,\infty)$, the space $(\XF,d,\mu)$ is asymptotically disintegrable with rate $\left( O(n^{-\alpha}),O\!\left( \left( \frac{\ln n}{n} \right)^{1 / \max\{2,D\}} \right) \right)$.
\end{conjecture}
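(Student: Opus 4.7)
The plan is to split the conjecture into two largely independent sub-problems: (i) reduction from a general compact connected Riemannian manifold to a Euclidean cube, and (ii) sharpening the cube rate for $D \geq 2$ from the present $O((\ln n / n)^{1/(2D)})$ of Theorem~\ref{T:dd main} to the conjectured $O((\ln n / n)^{1/D})$ (which, for $D = 2$, reads $(\ln n / n)^{1/2}$).

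For~(i), I would adapt the stacking idea from the proof of Theorem~\ref{T:dd main} together with the detach-and-reattach technique of Proposition~\ref{P:trees}. First, fix a smooth triangulation of~$\XF$ into finitely many closed simplices, each of geodesic diameter strictly smaller than the convexity radius and bilipschitz to a subset of~$[0,1]^D$ with constants depending only on~$\XF$. Then construct a measure-preserving bijection $S:\XF \setminus Z \rightarrow \widetilde{\XF}$, where~$Z$ is the null set consisting of all simplex boundaries, that flattens and assembles the simplices into a single region in~$\RB^D$ respecting simplex-adjacency (in the spirit of the ``stacking of tubes'' used in Theorem~\ref{T:dd main}). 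There exists a constant $K > 0$, depending only on~$\XF$, such that every geodesic ball $B_\XF(x,r)$ pulls back under $S^{-1}$ to a superset of the Euclidean ball $B(S(x), r / K)$ in $\widetilde{\XF}$. An application of Proposition~\ref{P:ordered metrics} (in the ``superset'' form implicit in the proof of Theorem~\ref{T:dd main}) then reduces disintegrability of~$\mu$ on~$\XF$ at radius $r(n)$ to disintegrability of $S_*\mu$ on $\widetilde{\XF}$ at radius $r(n)/K$, costing only a constant factor in~$r(n)$ and hence preserving the exponent.

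For~(ii), the challenge is to avoid the one-dimensional reduction that causes the exponent loss in Theorem~\ref{T:dd main}. I would attempt a direct $D$-dimensional analogue of the argument of Theorem~\ref{T:path main}. The starting point is to invoke Proposition~\ref{P:irreducible_interval} to restrict attention to irreducible~$I$; in~$D$ dimensions this means that $U_I := \bigcup_{i \in I} B(X_i, r(n))$ is topologically connected. For such~$I$ with $|I| = k$, in a generic configuration one has $\mathrm{vol}(U_I) \asymp k\, r(n)^D$, so the event $\{\mu(U_I) < k/n\}$ should have exponentially small probability once $n\, r(n)^D \gtrsim \ln n$. Making this precise requires a uniform concentration inequality over the VC class of finite unions of balls of radius $r(n)$, whose VC dimension is polynomial in~$D$; standard empirical-process tools (in the spirit of those used in~\cite{weedbach, lei}) should yield a tail of the form $\exp(-c\, n\, r(n)^D)$ uniformly over configurations of cardinality~$k$. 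Summing the resulting bounds over all (polynomially many) irreducible connected component types and using $r(n) \asymp (\ln n / n)^{1/D}$ should then yield the desired failure probability $O(n^{-\alpha})$.

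The main obstacle is step~(ii). In the 1D proof of Theorem~\ref{T:path main}, connected subsets~$I$ of $[n]$ (under $\PC_n$) are intervals, and the volume of $U_I$ reduces to a gap in order statistics whose distribution is explicit (Dirichlet/Beta). In $D \geq 2$, there is no clean combinatorial parametrization of connected configurations, and the geometry of~$U_I$ (particularly near $\partial\XF$, where balls lose up to half their volume) must be controlled uniformly in~$k$. The heart of the problem is therefore to find the right $D$-dimensional analogue of the Beta-tail argument that drives the 1D proof, delicately enough to retain the exponent $1/D$ rather than losing additional logarithmic or polynomial factors. A successful resolution would also provide an alternative route to the sharp high-probability Wasserstein rates discussed in Section~\ref{sec:wdre}.
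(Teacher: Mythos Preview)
The statement is a \emph{conjecture} in the paper, explicitly left open; there is no proof to compare against. Your proposal is a reasonable high-level strategy and you correctly locate the crux in step~(ii), but the sketch for~(ii) contains a concrete gap. The assertion that one can sum over ``polynomially many irreducible connected component types'' is false as stated: in dimension~$1$ the irreducible (i.e., $\PC_n$-connected) index sets are intervals, and it is precisely this $O(n^2)$ count that makes the union bound~\eqref{two terms_interval} go through. For $D \geq 2$ there is no analogous polynomial reduction---the number of connected subsets of any reasonable adjacency structure on~$n$ random points is exponential in~$n$, and irreducibility in the sense of Proposition~\ref{P:irreducible_interval} does not cut this down. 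Relatedly, the VC dimension of the class of unions of~$k$ balls in~$\RB^D$ grows with~$k$, not just with~$D$, so a uniform empirical-process bound of the form $\exp(-c\,n\,r(n)^D)$ over all such unions is not available without additional structure. Finding a substitute for the Beta-tail argument that controls this exponential family is exactly the obstacle the paper identifies, and your outline does not circumvent it.

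Step~(i) is more plausible but also not automatic. Assembling simplices of a triangulation into a single region of~$\RB^D$ while ``respecting simplex-adjacency'' is topologically obstructed for a general closed $D$-manifold (consider $S^D$); one would instead need a higher-dimensional analogue of the spanning-tree reduction in Lemma~\ref{L:Omega tOmega}, cutting along a codimension-$1$ skeleton to obtain something flattenable, and then the ball-inclusion claim $S^{-1}\big(B(S(x),r/K)\big)\subseteq B_\XF(x,r)$ fails for~$x$ near the cut. This is probably repairable at the cost of constants (as in the graph case via Proposition~\ref{P:ordered metrics}), but it requires argument.
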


We have also seen that asymptotic disintegrability (Definition \ref{def:disint2}) is sufficient for asymptotic average-case approximability (Definition \ref{def:averagecase}), and that the counterexample to the former supplied by Proposition \ref{prop:counterexample} is also a counterexample to the latter, for essentially the same reason (Proposition \ref{prop:averagecasecounterexample}). We thus conjecture the following.

\begin{conjecture}
    Let $(\XF,d,\mu)$ be a metric Polish probability space. Then $(\XF,d,\mu)$ is asymptotically average-case approximable if and only if $(\XF,d,\mu)$ is asymptotically disintegrable.
\end{conjecture}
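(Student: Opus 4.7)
The forward direction that asymptotic disintegrability implies asymptotic average-case approximability is Proposition~\ref{prop:averagecase}, so my plan concentrates on the converse, which I would attempt by contrapositive: assuming $(\XF, d, \mu)$ is \emph{not} asymptotically disintegrable, construct Lipschitz test functions witnessing the failure of asymptotic average-case approximability.

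Negating Definition~\ref{def:disint2} yields $\epsilon_0, r_0 > 0$ and an infinite subsequence $(n_k)$ such that, for each $n_k$, with $\PB$-probability at least $\epsilon_0$, the necessary-and-sufficient condition~\eqref{nasc} fails at radius $r_0$; i.e., some $I \subseteq [n_k]$ satisfies $\mu(\bigcup_{i \in I} B(X_i, r_0)) < |I|/n_k$. For each such $X_{[n_k]}$, select (measurably) $I$ realizing the maximum pointwise deficit
\[
\eta = \eta(X_{[n_k]}) := \max_{I \subseteq [n_k]}\!\left\{\frac{|I|}{n_k} - \mu\!\left(\bigcup_{i \in I} B(X_i, r_0)\right)\right\} > 0,
\]
set $U_I := \bigcup_{i \in I} B(X_i, r_0)$, fix $\delta \in (0, r_0)$, and form the $(1/\delta)$-Lipschitz function
\[
f_I(x) := \min\!\left\{\frac{d(x, \XF \setminus U_I)}{\delta},\, 1\right\}.
\]

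Since $d(X_i, \XF \setminus U_I) \geq r_0$ for each $i \in I$ (as $B(X_i, r_0) \subseteq U_I$), one has $f_I(X_i) = 1$ for $i \in I$ and $f_I \equiv 0$ on $\XF \setminus U_I$, so $\hat\mu_{n_k}(f_I) - \int\!f_I\,d\mu \geq |I|/n_k - \mu(U_I) = \eta$ independently of $\delta$.  Moreover, for any $r_1 \in (0, r_0 - \delta]$ and $i \in I$, the ball $B(X_i, r_1)$ sits in the ``deep interior'' $\{f_I = 1\}$, so $\|f_I\|_{\text{Lip}, B(X_i, r_1)} = 0$, and every ball $B(X_i, r_1)$ disjoint from $U_I$ likewise contributes $0$. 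Only the ``boundary balls''---those $B(X_i, r_1)$ with $i \notin I$ whose ball meets $U_I$---can contribute, each at most $1/\delta$.

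The main obstacle is therefore to control the number $N_B$ of boundary balls uniformly in $X_{[n_k]}$: a contradiction with approximability at a candidate rate $r_1$ would require $r_1 N_B / (n_k \delta) < \eta$, but in principle $N_B$ can be $\Theta(n_k)$ when many sample points cluster near $\partial U_I$, while the deficit $\eta$ itself may shrink with~$n_k$ (its minimum positive value is $1/n_k$). My proposed remedy, in close analogy with the proof of Proposition~\ref{prop:averagecasecounterexample}, would be to seek a \emph{smaller} radius $r_1 \ll r_0$ at which the ball graph exhibits a $\mu$-almost \emph{disconnection} $J \sqcup ([n_k] \setminus J)$ that already witnesses the deficit, and to replace $f_I$ by a Lipschitz interpolation between the constants $1$ and $0$ across the two separated pieces; such a function would have vanishing local Lipschitz constants on \emph{every} ball $B(X_i, r_1)$, yielding the desired contradiction for free. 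Whether a large-scale failure of disintegrability necessarily produces such a small-scale separation, and how to quantify the rate at which it does so, is the hard geometric question behind the conjecture and the reason only a conjecture, rather than a theorem, is offered.
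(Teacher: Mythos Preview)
The statement you were asked to prove is labeled a \emph{conjecture} in the paper and is left entirely open there; the paper provides no proof for the converse direction, only the forward implication (Proposition~\ref{prop:averagecase}) together with the suggestive evidence of Propositions~\ref{prop:counterexample} and~\ref{prop:averagecasecounterexample}. You correctly identify this status in your final sentence, so there is no ``paper proof'' against which to compare your attempt.

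As for the sketch itself: your negation of asymptotic disintegrability is correct (the reduction to fixed $\epsilon_0, r_0 > 0$ is justified by the monotonicity of Remark~\ref{R:monotonicity}), and your test function $f_I$ and the bookkeeping around it are sound. You have also put your finger on precisely the obstruction that makes this a conjecture rather than a theorem: failure of~\eqref{nasc} at radius $r_0$ does not obviously force any \emph{separation} phenomenon at a smaller radius $r_1$, and without that the boundary-ball count $N_B$ and the random deficit $\eta$ cannot be controlled against one another. Your proposed remedy---seeking a radius at which the sample ball graph $\mu$-almost disconnects---would indeed reduce the problem to the mechanism behind Proposition~\ref{prop:averagecasecounterexample}, but establishing that non-disintegrability \emph{forces} such a disconnection is exactly the missing geometric lemma, and nothing in the paper supplies it. In short, your write-up is an accurate and honest outline of where the difficulty lies, not a proof; the paper is in the same position.
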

\bigskip

\begin{acks}
We thank Amitabh Basu for helpful discussions. L.~E.~M.\ also thanks Ren\'{e} Vidal for helpful discussions.
\end{acks}

\bibliographystyle{plain}
\bibliography{references}

\end{document}